\newtheorem{theorem}{Theorem}[section]
\newtheorem{proposition}[theorem]{Proposition}
\newtheorem{lemma}[theorem]{Lemma}
\newtheorem{corollary}[theorem]{Corollary}
\theoremstyle{remark}
\newtheorem{remark}[theorem]{Remark}
\newtheorem{example}[theorem]{Example}
\def\dimh{\dim_{\textup{H}}}
\def\dimb{\dim_{\textup{B}}}
\def\dima{\dim_{\textup{A}}}
\def\diml{\dim_{\textup{L}}}
\def\bara{Bara\'{n}ski}
\numberwithin{equation}{section}
\begin{document}
	\title[The Hausdorff measure and uniform fibre conditions for  Bara\'nski carpets]{The Hausdorff measure and uniform fibre conditions for Bara\'nski carpets}

	\author{Hua Qiu}
	\address{School of Mathematics, Nanjing University, Nanjing, 210093, P. R. China.}
	\thanks{The research of Qiu was supported by the National Natural Science Foundation of China, grant 12471087.}
	\email{huaqiu@nju.edu.cn}
	
	\author{Qi Wang}
	\address{School of Mathematics, Nanjing University, Nanjing, 210093, P. R. China.}
	\thanks{}
	\email{602023210013@smail.nju.edu.cn}

	\subjclass[2010]{Primary 28A78,28A80}
	
	\date{}
	
	\keywords{Hausdorff measure, Hausdorff dimension, box dimension, self-affine carpets,  uniform fibres}
	
	\maketitle

	\begin{abstract}For a self-affine carpet $K$ of Bara\'{n}ski, we establish a dichotomy:  
		$$
		\text{either }\quad 0<\mathcal{H}^{\dim_{\text{H}} K}(K)<+\infty \quad \text{ or } \quad\mathcal{H}^{\dim_{\text{H}} K}(K)=+\infty.
		$$
		We introduce four types of uniform fibre condition for $K$: Hausdorff ($\textbf{u.f.H}$), Box ($\textbf{u.f.B}$), Assouad ($\textbf{u.f.A}$), and Lower ($\textbf{u.f.L}$), which are progressively stronger, with 
		$$
		\textbf{u.f.L} \Longrightarrow \textbf{u.f.A} \Longrightarrow \textbf{u.f.B} \Longrightarrow \textbf{u.f.H},
		$$
		and each implication is strict. The condition $\textbf{u.f.H}$ serves as a criterion for the dichotomy. The remaining three conditions provide an equivalent characterization for the coincidence of any two distinct dimensions. The condition $\textbf{u.f.L}$ is also equivalent to the Ahlfors regularity of $K$. As a corollary,  $\dim_{\text{H}} K=\dim_{\text{B}} K$ is sufficient but not necessary for $0<\mathcal{H}^{\dim_{\text{H}} K}(K)<+\infty$.

	\end{abstract}
	\section{Introduction}\label{sec1}

	The Hausdorff measure is a fundamental concept in the field of fractal geometry, used to measure the size of sets from a geometric perspective, particularly those with non-integer dimensions, see \cite{F90,M95}.  It is well known that for a self-similar set $K$ satisfying the open set condition (OSC), the Hausdorff measure is positive and finite in its dimension. More precisely, $K$ is Ahlfors regular, i.e. there exists a constant $c>0$ such that, denoting $\mathcal{H}^{\dimh K}$ as the Hausdorff measure in its dimension,
	$$
	c^{-1} r^{\dimh K}\leq \mathcal{H}^{\dimh K}(B(x,r)\cap K)\leq cr^{\dimh K}
	$$
	for all $x\in K$ and $0<r\leq |K|$, where $|K|$ denotes the diameter of $K$ and $B(x,r)$ is the closed ball centred at $x$ with radius $r$. However, the situation differs for self-affine sets satisfying OSC, even for those sets having regular grid structures. In 1994, Peres \cite{P94} demonstrated that for a Bedford-McMullen carpet $K$, the Hausdorff measure at its dimension is infinite, except in the rare  case where Hausdorff and box dimensions coincide. Coupled with the findings on Lalley-Gatzouras carpets \cite{LG92}, a more general class, this reveals a dichotomy for a Bedford-McMullen carpet $K$: 
	\begin{equation}\label{e92}
		\text{ either }\quad 0<\mathcal{H}^{\dimh K}(K)<+\infty \quad \text{ or }\quad \mathcal{H}^{\dimh K}(K)=+\infty.
	\end{equation}
	The former occurs if and only if $K$ possesses uniform fibres. One of the main aims in this paper is to extend the consideration to  \bara's carpets \cite{B07}: Does such a dichotomy hold, and if so, how can it be characterized?
	\vspace{0.2cm}
	
	Take a unit square and divide it up into rectangles via a finite number of vertical or horizontal lines;  formulate an \textit{iterated function system} (IFS) which maps the unit square onto a collection of chosen  rectangles through  orientation preserving linear contractions. The attractors of such IFSs are called \bara's  carpets \cite{B07}. Precisely, let $r,s,d\geq 2$ with $d\leq rs$ be three integers. For $i=1,\cdots,r$ and $j=1,\cdots,s$, choose $0<a_i, b_j<1$ such that $\sum_{i=1}^r a_i= 1$ and $\sum_{j=1}^s b_j= 1$.  Divide the  unit square $[0,1]^2$ into $r$ vertical strips of widths $a_1,\cdots, a_r$, and $s$ horizontal strips of heights $b_1, \cdots, b_s$. Let  
	$$
	\mathcal{J}=\big\{(i_1,j_1),\cdots, (i_d,j_d)\big\}\subseteq \{ 1, \cdots, r\} \times \{1,\cdots,s\}
	$$
	be a subset. 
	For $l=1,\cdots,d$, define the affine mapping $\psi_l$ as follows: 
	$$
	\psi_l \left( \begin{array}{c}
		x\\y
	\end{array} \right) 
	= \left(
	\begin{array}{cc}
		a_{i_l} & 0 \\
		0 & b_{j_l}
	\end{array} \right)
	\left( \begin{array}{c}
		x\\y
	\end{array} \right) 
	+
	\left( \begin{array}{c}
		\sum_{i=1}^{i_l-1} a_i\\ \sum_{j=1}^{j_l-1}b_j
	\end{array} \right) .
	$$
	Then, the set $\{\psi_1,\cdots, \psi_d\}$ forms a self-affine IFS, see Figure \ref{f2} (left) for an illustration. Denote $K$ as its attractor, which is the unique non-empty compact set satisfying $K=\bigcup_{l=1}^d \psi_l(K)$. We refer to $\{\psi_1,\cdots, \psi_d\}$ as a \textit{\bara~IFS} and $K$ as a \textit{Bara\'{n}ski carpet}. 
	\begin{figure}[htbp]
		\centering
		\subfigure{
			\includegraphics[width=0.3\textwidth]{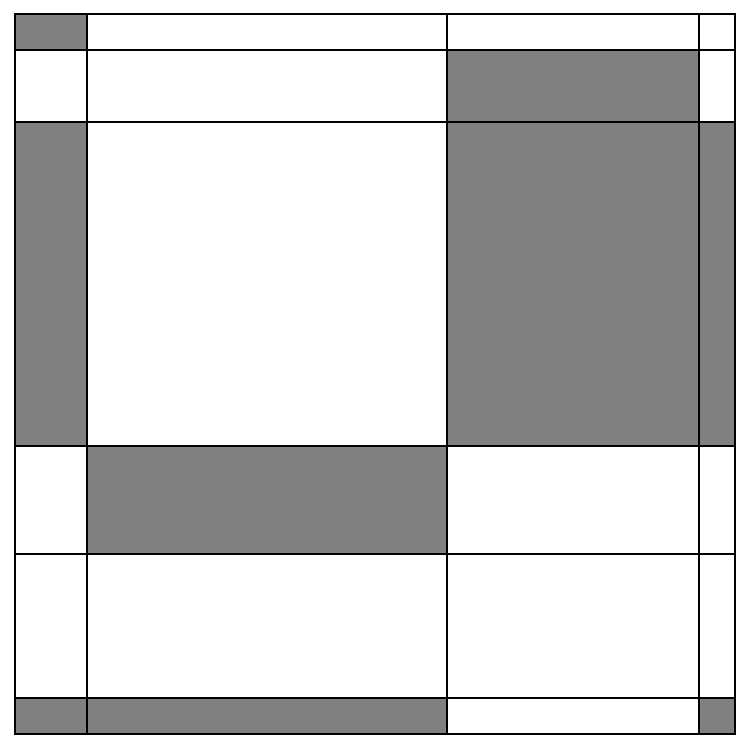}
		}
		\subfigure{
			\includegraphics[width=0.3\textwidth]{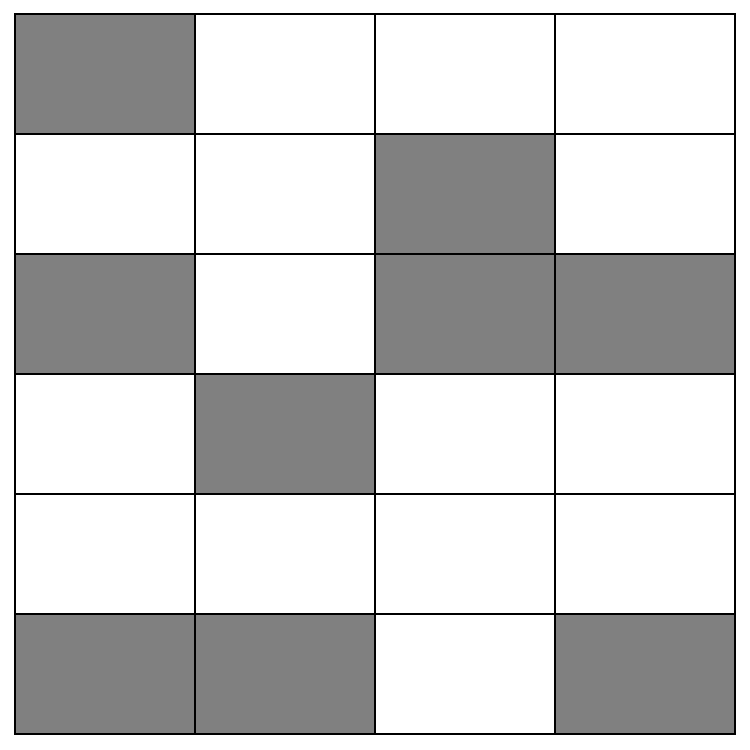}
		}
		\subfigure{
			\includegraphics[width=0.3\textwidth]{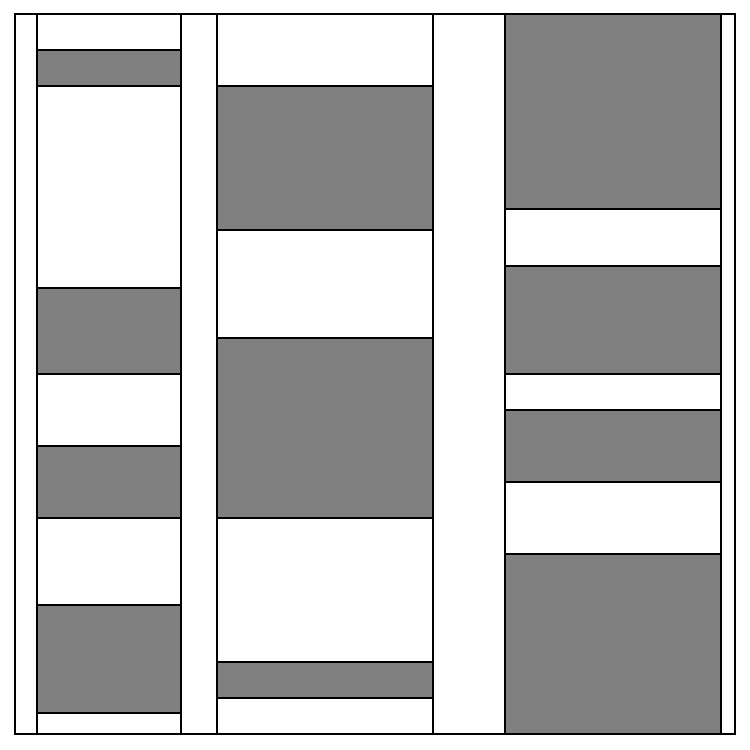}
		}
		\caption{Generating templates for a \bara~carpet (\textit{left}), a Bedford-McMullen carpet (\textit{middle}) and a Lalley-Gatzouras carpet (\textit{right}).}
		\label{f2}
	\end{figure}

	Note that if a \bara~carpet $K$ degenerates to a self-similar set (for example, $a_{i_l}=b_{j_l}$ for all $l$), since it always satisfies the OSC, it is Ahlfors regular with
	$$
	\dimh K=\dimb K\quad \text{ and }\quad 0<\mathcal{H}^{\dimh K}(K)<+\infty,
	$$ where we use $\dimh K$ and $\dimb K$ to represent the Hausdorff and box dimensions of $K$, respectively (see \cite{F90,M95} for general discussions).
	So to avoid trivial cases, in the following we always assume that 
	\begin{equation}\label{e38}
		\text{\textbf{Assumption:}}  \left\{
		\begin{aligned}
			&\text{ there exists }l\in \{1,\cdots,d\} \text{ such that } a_{i_l}\neq b_{j_l}, 
			\\
			&\text{ and } K \text{ is not contained within any vertical or horizontal line}.
		\end{aligned}\right. 
	\end{equation}

	The \bara\  carpets constitute an important and extensively studied planar self-affine sets, characterized by diagonal generating linear contractions and a certain geometric alignment of cylinders. Historically, the initial study among these sets is on the so-called \textit{Bedford-McMullen carpets}, where $r\neq s$ (with the convention $r<s$ without loss of generality), $a_i=r^{-1}$ and $b_j=s^{-1}$ for all $i$ and $j$, as shown in Figure \ref{f2} (middle). Bedford \cite{B84} and McMullen \cite{M84} independently calculated the Hausdorff and box dimensions, and indicated that $\dimh K=\dimb K$ whenever the number of chosen rectangles in each ``nonempty'' column is uniform, i.e. $K$ has uniform fibres. Later,  Lalley and Gatzouras \cite{LG92} extended the study to a more flexible setting by maintaining the column structure, allowing distinct values for $a_i$, and permitting rectangles along each column to shift vertically, yet still requiring that the height of each rectangle remains less than its width, as depicted in Figure \ref{f2} (right). For a \textit{Lalley-Gatzouras carpet} $K$, it was demonstrated  that the Hausdorff  and box dimensions, $\dimh K$  and $\dimb K$, can be derived as the maximum values from certain variational formulas over Bernulli measures supported on $K$ (the formula for $\dimb K$ was originally expressed in a closed form of the ratios of the IFS \cite{LG92}, while an equivalent variational expression was presented later by Feng and Wang \cite[Theorem 1]{FW05} on a more general box-like self-affine carpets). Notably, 
	\begin{equation}\label{e91}
		K \text{ has uniform fibres} \iff \dimh K=\dimb K  \iff 0<\mathcal{H}^{\dimh K}(K)<+\infty.
	\end{equation}
	
	The key distinction between the \bara\  and Lalley-Gatzouras classes is that in the \bara\  class, the larger ratio of each contraction does not necessarily align in the same  direction, whether horizontal or vertical, which significantly increases the complexity. In this regard, the \bara\  class can be considered   an essential extension of Lalley-Gatzouras's. \bara\  derived the formulas for the Hausdorff and box dimensions of such fractals by extending the analysis to account for both directions, although it is considerably more intricate. 
	
	\vspace{0.2cm}
	
	At first glance, one may think that the implications in \eqref{e91} hold true for \bara\ carpets, at least for ``$\dimh K=\dimb K  \iff 0<\mathcal{H}^{\dimh K}(K)<+\infty$''. However, for a \bara\ carpet $K$, the challenge lies in  formulating the condition for ``uniform fibres'', given that there are two directions to consider: \textit{uniform vertical fibres} and \textit{uniform horizontal fibres} (for precise definitions, refer to Section \ref{sec2}). A natural question arises: which direction predominantly influences the characterization? 
	
	To address this, we will introduce two types of uniform fibre condition: the \textit{uniform fibre condition of Hausdorff type} \eqref{H}, and the \textit{uniform fibre condition of Box type} \eqref{B}, which are closely related to the expressions of Hausdorff and box dimensions, respectively. Roughly speaking, we say that \ref{H}  holds if $K$ has uniform vertical (resp. horizontal) fibres whenever the Hausdorff dimension of $K$ is achieved by its variational formula through projecting Bernulli measures vertically (resp. horizontally). The case is similar for \ref{B}.  For their exact definitions, see Section \ref{sec2}. We point out that the former is equivalent to ``$0<\mathcal{H}^{\dimh K}(K)<+\infty$'', and the later is equivalent to ``$\dimh K=\dimb K $''. 
	
	\begin{theorem}\label{th1} 
		Let $K$ be a \bara~carpet. Then
		\begin{align}
			\ref{H} &\iff0<\mathcal{H}^{\dimh K}(K)<+\infty, \tag{a} \label{e40}
			\\
			\neg \ref{H} &\iff \mathcal{H}^{\dimh K}(K)=+\infty, \tag{b} \label{e41}
			\\
			\ref{B} &\iff \dimh K =\dimb K.\tag{c}\label{e42}
		\end{align}
	\end{theorem}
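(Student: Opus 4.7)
The plan is to combine Barański's variational formulas for $\dimh K$ and $\dimb K$ with a Peres-style density analysis on the natural dimension-maximizing Bernoulli measure, adapted to the two possible ``winning'' directions of a Barański carpet. Recall from \cite{B07} that $\dimh K=\max\{s_X,s_Y\}$, where $s_X$ and $s_Y$ arise from projecting $K$ onto the horizontal and vertical axes respectively, and an analogous splitting holds for $\dimb K$ by \cite{FW05}. The conditions \eqref{H} and \eqref{B} require uniform fibres exactly in whichever direction realizes the corresponding maximum, so the statement is ``direction-aware'' by design.

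For part \eqref{e42}, I proceed direction by direction. If $\dimh K=s_Y$ (the $s_X$ case being symmetric), then running the Legendre-type analysis of \cite{LG92,FW05} along the vertical projection alone reduces the comparison $\dimh K=\dimb K$ to a Lalley-Gatzouras-type identity in that direction, which in turn forces the number of selected rectangles in each non-empty row to be constant, i.e.\ uniform horizontal fibres. Combining the two directions gives \eqref{e42}.

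Parts \eqref{e40} and \eqref{e41} are contrapositive once one knows $\mathcal{H}^{\dimh K}(K)>0$ always, which follows from a standard mass distribution argument applied to the Bernoulli measure $\mu$ that realizes the variational maximum. For the implication \eqref{H} $\Rightarrow$ $\mathcal{H}^{\dimh K}(K)<+\infty$, I would construct at each scale $r$ an efficient cover of $K$ by approximate squares whose cardinality matches the $\mu$-decay rate, the uniform fibre assumption in the maximizing direction being exactly what is needed to bound this cover count. For the reverse (equivalently, for \eqref{e41}), I would adapt Peres's argument from \cite{P94}: assuming uniform fibres fail in the max-realizing direction, I would produce a $\mu$-generic point $x$ and a sequence $r_n\to 0$ along which $\mu(B(x,r_n))/r_n^{\dimh K}\to 0$, forcing $\mathcal{H}^{\dimh K}(K)=+\infty$ by a standard upper density estimate.

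The hard part will be this Peres-type step in the bidirectional setting. Unlike in the Bedford-McMullen or Lalley-Gatzouras classes, the maps split according to whether $a_{i_l}\geq b_{j_l}$ or $a_{i_l}<b_{j_l}$, so the shape of an approximate square at scale $r$ depends both on the symbolic trajectory up to time $n(r)$ and on which group of maps currently dominates. Producing, with positive $\mu$-probability, arbitrarily sparse approximate squares when uniform fibres fail in the max-realizing direction requires a large-deviation / ergodic-theoretic argument applied separately to the two groups of maps, glued together via the Birkhoff ergodic theorem for the Bernoulli shift. This two-sided bookkeeping, absent from the Bedford-McMullen and Lalley-Gatzouras settings, is the essential new ingredient.
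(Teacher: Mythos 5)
Your overall architecture (variational formulas, direction-aware uniform fibre conditions, a Peres-type argument for infiniteness, a covering/mass-distribution argument for finiteness) matches the paper's, but two of your key steps would fail as stated. First, in the direction $\neg\ref{H}\Rightarrow\mathcal{H}^{\dimh K}(K)=+\infty$, your density criterion is wrong: exhibiting a $\mu$-generic $x$ and a sequence $r_n\to 0$ with $\mu(B(x,r_n))/r_n^{\dimh K}\to 0$ only makes the \emph{lower} density vanish, whereas the Rogers--Taylor-type bound $\mathcal{H}^{s}(E)\geq \mu(E)/\lambda$ requires $\mu(B(x,r))\leq\lambda r^{s}$ for \emph{all} small $r$ (the covers in the definition of Hausdorff measure choose their own diameters, so a bound along a favourable subsequence of scales is useless). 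Worse, the fixed dimension-maximizing Bernoulli measure cannot satisfy such an all-scale bound: when fibres are non-uniform, the law of the iterated logarithm makes $\log\mu\big(Q(\varepsilon,n)\big)-\dimh K\cdot\log L_{\varepsilon|_n}$ fluctuate by $\pm c\sqrt{n\log\log n}$, so the upper density is a.s.\ $+\infty$. This is precisely why Peres, and the paper in Lemma \ref{le18}, abandon the Bernoulli maximizer in favour of a slowly drifting product measure $\nu_\delta=\prod_n\bm{q}^{(n)}$ with $\bm{q}^{(n)}=\bm{q}+\frac{\delta}{\log n}\bm{u}$, measured against the gauge $\varphi_c$ of \eqref{e90}; in the Bara\'nski setting the admissible drift $\bm{u}$ must moreover be chosen according to the rank of $(\bm{\alpha},\bm{\beta},\bm{e},\bm{\gamma})$, with a recoding of the IFS in the degenerate cases, and everything must be localized to $\Sigma_1$ or $\Sigma_2$. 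None of this is recovered by ``large deviations plus Birkhoff'' applied to the maximizing Bernoulli measure.

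Second, your finiteness step and your positivity claim have gaps. The assertion that $\mathcal{H}^{\dimh K}(K)>0$ \emph{always} follows from a standard mass distribution argument with the maximizing Bernoulli measure is unjustified for the same LIL reason; fortunately positivity is only needed under \ref{H}, where the uniform-fibre identity $R_i(\bm{q})=a_i^{t_1}$ makes the bound $\mu\big(Q(\varepsilon,\delta)\big)\leq C\delta^{D_1}$ exact, as in Lemma \ref{le13}, so your logical bookkeeping can be repaired. More seriously, a single ``efficient cover of $K$ by approximate squares'' does not exist: under uniform vertical fibres the two-sided comparison $\mu(Q)\asymp|Q|^{D_1}$ holds only for $1$-type approximate squares (for $2$-type squares the inequality goes the wrong way), so the covering argument only bounds $\mathcal{H}^{D_1}(K_1)$, where $K_1$ consists of points seeing $1$-type squares infinitely often. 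The paper must handle the complement separately, proving via Lemma \ref{le14} that $\dimh K_1^{c}\leq G_2<G_1=\dimh K$ so that it carries zero measure, treating the boundary case $G_1=G_2$ with both coverings and $D_1=D_2$, and first identifying $\dimh K=G_1=D_1$ through the comparison $g_1\leq f_1$ on $\mathrm{int}(\mathcal{S}_1)$ and the location of maximizers (Proposition \ref{le8}, Lemmas \ref{le3} and \ref{le4}); the same maximizer analysis, together with the cross-comparison between $D_1$ versus $D_2$ and $G_1$ versus $G_2$ and the convexity argument forcing $\sum_{m\in I_i}b_{j_m}^{D_1-t_1}=1$, is what actually drives part \eqref{e42}. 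These ingredients are absent from your plan and are where the real work lies.
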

	
	For Theorem \ref{th1}, it suffices to prove the direction ``$\Longrightarrow$'' in \eqref{e40} and \eqref{e41} as the other direction follows by contraposition.
	We will proceed to prove Theorem \ref{th1} in the order of the direction ``$\Longrightarrow$'' in \eqref{e41}, the two directions in \eqref{e42}, and the direction ``$\Longrightarrow$'' in \eqref{e40}.
	
	The theorem also demonstrates that the dichotomy \eqref{e92} for Hausdorff measure continues to hold for \bara\  carpets, which is determined by condition \ref{H}. However, in general, 
	$$\dimh K=\dimb K  \Longleftarrow\mkern-21.5mu\backslash \quad0<\mathcal{H}^{\dimh K}(K)<+\infty,$$ as stated in the following corollary.
	
	\begin{corollary}\label{cor1}
		Let $K$ be a \bara~carpet. Then, $\dimh K=\dimb K$ is a sufficient but not necessary condition for  $0<\mathcal{H}^{\dimh K}(K)<+\infty$.

	\end{corollary}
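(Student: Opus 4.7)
The plan is to derive the corollary entirely from Theorem \ref{th1} (and one direction of the implication chain $\textbf{u.f.B} \Longrightarrow \textbf{u.f.H}$ declared in the abstract) for the sufficiency half, and then exhibit an explicit Bara\'nski carpet showing that the implication cannot be reversed.

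For the sufficient direction, I would argue as follows. Assume $\dimh K=\dimb K$. By Theorem \ref{th1}\eqref{e42}, this is equivalent to condition \ref{B}. Since in the hierarchy of uniform fibre conditions we have $\ref{B}\Longrightarrow \ref{H}$ (this is one of the elementary implications in the chain $\textbf{u.f.L}\Longrightarrow\textbf{u.f.A}\Longrightarrow\textbf{u.f.B}\Longrightarrow\textbf{u.f.H}$, which follows immediately by comparing the defining requirements of the two conditions in Section \ref{sec2}), we obtain $\ref{H}$, and then Theorem \ref{th1}\eqref{e40} gives $0<\mathcal{H}^{\dimh K}(K)<+\infty$. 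This settles sufficiency.

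For the non-necessity, the task is to construct a specific Bara\'nski carpet $K$ satisfying $\ref{H}\wedge \neg\ref{B}$; by Theorem \ref{th1}, such a $K$ will satisfy $0<\mathcal{H}^{\dimh K}(K)<+\infty$ but $\dimh K<\dimb K$. The design principle is asymmetry between the two projection directions: roughly speaking, \ref{H} requires uniform fibres only in the direction(s) singled out by the variational formula for $\dimh K$, while \ref{B} additionally requires uniform fibres in the direction singled out by $\dimb K$. So I would build a carpet whose column/row widths $a_i$, $b_j$ and chosen set $\mathcal{J}$ are arranged so that (i) the variational formula for $\dimh K$ is maximised by projecting, say, vertically (onto the horizontal side), and the vertical projection of $K$ does have uniform fibres; while (ii) the corresponding optimisation for $\dimb K$ picks the other direction, where the horizontal projection has non-uniform fibres. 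A concrete way to force this asymmetry is to keep the $a_i$'s equal (so that vertical uniformity is automatic and the $\dimh K$ optimum is clean), while choosing distinct $b_j$'s with a distribution of $\mathcal{J}$ that violates horizontal uniform fibres. One can then verify the three conditions: uniform vertical fibres give $\ref{H}$; the failure of horizontal uniform fibres, coupled with the fact that the $\dimb K$ variational formula singles out the horizontal direction on this example, gives $\neg\ref{B}$; and finally one computes $\dimh K$ and $\dimb K$ from \bara's formulas to confirm strict inequality.

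The main obstacle is the last step: actually producing numerical choices of $r,s,\mathcal{J},\{a_i\},\{b_j\}$ for which one can prove that the $\dimh K$ optimum truly is attained in the vertical direction while the $\dimb K$ optimum is attained in the horizontal one. This requires a careful analysis of the two variational formulas of \bara~and a comparison between their maximisers; small or borderline examples tend to collapse to the symmetric situation and thus fail to separate \ref{H} from \ref{B}. Once the example is in hand, verifying $\ref{H}\wedge \neg\ref{B}$ reduces to inspecting the column/row structure, and the conclusion $\dimh K<\dimb K$ together with $0<\mathcal{H}^{\dimh K}(K)<+\infty$ follows directly from Theorem \ref{th1}.
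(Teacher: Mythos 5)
There is a genuine gap in your sufficiency argument. You treat the implication $\ref{B}\Longrightarrow\ref{H}$ as ``elementary'' and ``immediate by comparing the defining requirements'', but it is not: \ref{B} is phrased in terms of the comparison of $D_1$ and $D_2$, while \ref{H} is phrased in terms of where the maximisers of the Hausdorff variational function $g$ lie ($\mathcal{S}_1$ versus $\mathcal{S}_2$), and nothing in the definitions rules out, say, $D_1\geq D_2$ with uniform vertical fibres while a maximiser of $g$ sits in $\mathcal{S}_2$ and the horizontal fibres are non-uniform. Ruling this out is exactly the substance of the paper's proof of Corollary \ref{cor1}: it passes to the equivalent condition \ref{H'} (Proposition \ref{le15}), and in the case $G_1\leq G_2$ it shows $G_2=\dimh K=\dimb K\geq D_2$ and, via Proposition \ref{le8} and Lemmas \ref{le3} and \ref{le4}, $G_2\leq D_2$, hence $G_2=D_2$, whereupon Lemma \ref{le19} forces uniform horizontal fibres. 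Note also that within the paper's logical structure you cannot simply quote the chain in Proposition \ref{prop5}: the step $\ref{B}\Longrightarrow\ref{H}$ there is itself deduced from Theorem \ref{th1} and Corollary \ref{cor1}, so invoking it here would be circular. Your proof as written assumes precisely the nontrivial point to be established.

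The non-necessity half is also incomplete: you describe a sensible design principle (decouple the direction favoured by the $\dimh$ variational formula from the one favoured by $\dimb$, with uniform fibres only in the former) but you do not actually produce a carpet, and you yourself flag the verification of the maximiser locations as the main obstacle. The paper resolves this with the explicit Example \ref{ex1} (all heights equal to $d$, widths $a,b,c$ distinct, two rectangles per column), for which uniform vertical but not horizontal fibres are checked by inspection and $G_1>G_2$, $D_2>D_1$ are verified numerically, giving $\dimh K<\dimb K$ with $0<\mathcal{H}^{\dimh K}(K)<+\infty$ via Theorem \ref{th1}. Incidentally, your concrete suggestion of making the $a_i$ equal ``so that vertical uniformity is automatic'' mixes up the directions: uniform vertical fibres are the conditions $\sum_{l\in I_i}b_{j_l}^t=1$ on the heights within each column, so equal widths $a_i$ buy you nothing there (the paper's example instead makes all heights equal, so that uniform vertical fibres reduce to equal column counts).
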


	In other words, it is possible that
	\begin{equation}\label{e37}
		\dimh K <\dimb K\quad\text{ and }\quad 0<\mathcal{H}^{\dimh K}(K)<+\infty.
	\end{equation}
	
	From Theorem \ref{th1} and Corollary \ref{cor1}, it is evident that condition \ref{B} is generally stronger than condition \ref{H}. However, they are equivalent when $K$ degenerates to a Lalley-Gatzouras carpet, as noted in Remark \ref{re2}. Consequently, Theorem \ref{th1} reduces to \eqref{e91} for Lalley-Gatzouras carpets, a result that is already known.
	
	\vspace{0.2cm}
	
	The second aim in this paper is to paint a complete picture of the comparisons between any two distinct dimensions of \bara\  carpets, including the Hausdorff and box dimensions, as well as the  Assouad and lower dimensions, denoted as $\dima K$ and $\diml K$, respectively. These latter two dimensions form a dual ``dimension pair'', emphasizing the local geometric properties of fractals and reflecting the  scaling laws of the most and least dense parts of  the sets (for a general discussion, see \cite{F21}).
	
	Recall that in 2011, Mackay \cite{M11} initiated the study of Assouad dimension of  Lalley-Gatzouras carpets, employing a technique of constructing weak tangents. The approach was later extended by Fraser \cite{F14} to Bara\'{n}ski carpets. Dually, the lower dimension (introduced by Larman  \cite{Lar67}) of \bara\ sets was also determined in  \cite{F14}. Analogous to  condition \ref{H} and \ref{B}, we will introduce two more uniform fibre conditions: \textit{the uniform fibre condition of Assouad type} \eqref{A}, and \textit{the uniform fibre condition of Lower type} \eqref{L}, which correspond to the expressions of the Assouad and lower dimensions of \bara\ carpets. See Section \ref{sec2} for their exact definitions.
	
	\begin{theorem}\label{th3}
		Let $K$ be a \bara~carpet. Then
		\begin{align}
			\ref{A}  &\iff \dimb K=\dima K  \iff  \dimh K =\dima K,\tag{a}\label{e11} 
			\\
			\ref{L}  &\iff  \diml K=\dimh K \iff  \diml K=\dima K \iff K \text{ is Ahlfors regular.} \tag{b}\label{e12}
		\end{align}
		
		
		
	\end{theorem}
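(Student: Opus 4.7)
The approach is to deduce Theorem \ref{th3} from three ingredients: Fraser's explicit formulas for $\dima K$ and $\diml K$ (which, like those for $\dimh K$ and $\dimb K$, are built from the horizontal and vertical fibre counts), an ``average-versus-extremum'' principle (box-type dimensions arise from log-averages of fibre counts while the Assouad and lower dimensions arise from worst-case maxima and minima, so equality between them should force uniformity via a Jensen-type argument), and Theorem \ref{th1} together with the implications $\textbf{u.f.L} \Rightarrow \textbf{u.f.A} \Rightarrow \textbf{u.f.B} \Rightarrow \textbf{u.f.H}$ recorded in the abstract.

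For part (a), the chain $\dimh K \leq \dimb K \leq \dima K$ already gives $\dimh K = \dima K \Rightarrow \dimb K = \dima K$, so the work is to close the cycle by proving $\dimb K = \dima K \Rightarrow \textbf{u.f.A}$ and $\textbf{u.f.A} \Rightarrow \dimh K = \dima K$. For the first, I would write both $\dimb K$ and $\dima K$ via their variational/closed-form formulas as a maximum over the horizontal and vertical fibre structures, and compare them direction by direction; on whichever side attains the Assouad maximum, the Assouad formula replaces a geometric average of log-counts over the active rows (or columns) by the maximum log-count, and the equality $\dimb K = \dima K$ therefore forces all these counts to coincide --- which is precisely what \textbf{u.f.A} asserts. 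For the converse, $\textbf{u.f.A} \Rightarrow \textbf{u.f.B}$ combined with Theorem \ref{th1}(c) gives $\dimh K = \dimb K$, and plugging \textbf{u.f.A} directly into the formulas shows $\dimb K = \dima K$, chaining the two equalities.

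For part (b), I would proceed via the circle $\textbf{u.f.L} \Rightarrow K$ is Ahlfors regular $\Rightarrow \diml K = \dima K \Rightarrow \diml K = \dimh K \Rightarrow \textbf{u.f.L}$. The step $\diml K = \dima K \Rightarrow \diml K = \dimh K$ is immediate from $\diml K \leq \dimh K \leq \dima K$, and that Ahlfors regularity forces $\diml K = \dima K$ is standard. For $\textbf{u.f.L} \Rightarrow K$ Ahlfors regular, the key observation is that, being the strongest fibre condition, \textbf{u.f.L} forces the number of chosen rectangles in every nonempty row and every nonempty column to be constant; this reduces the level-$n$ cylinder structure of $K$ to that of a Bedford--McMullen-like carpet with two-sided uniform fibres, and an explicit Bernoulli measure assigning equal mass to level-$n$ cylinders can then be shown to satisfy the two-sided Ahlfors ball bound. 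For $\diml K = \dimh K \Rightarrow \textbf{u.f.L}$ I would again invoke the average-versus-minimum dichotomy: $\dimh K$ is a log-average over nonempty rows and columns while $\diml K$ replaces these averages by minima simultaneously in both directions, so the equality forces uniformity of counts in both directions --- exactly \textbf{u.f.L}.

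The main obstacle will be $\textbf{u.f.L} \Rightarrow K$ Ahlfors regular, since one must turn a combinatorial uniformity assumption into a two-sided geometric bound for arbitrary balls (not merely for cylinder rectangles), which in the Barański setting is complicated by the presence of long, thin cylinders appearing at many scales and mixed orientations. A secondary obstacle is making precise the sense in which the Assouad formula ``replaces an average by a maximum'', since Fraser's expression already mixes the two directions; care is needed to avoid double-counting when the Assouad maximum is attained in a different direction from the one maximising the box or Hausdorff formula, and to keep the argument symmetric between horizontal and vertical fibres.
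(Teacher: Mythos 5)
Your overall architecture is the same as the paper's (directionwise comparison of the closed-form expressions, ``equality forces uniformity'' lemmas, Theorem \ref{th1}-\eqref{e42} to bridge Hausdorff and box, an explicit measure for Ahlfors regularity, and the standard fact that Ahlfors regularity gives $\diml K=\dima K$), but the sketch has genuine gaps. The most serious is your treatment of $\ref{L}\Rightarrow K$ Ahlfors regular. Condition \ref{L} does \emph{not} force uniformity of fibres in both directions: for a carpet of horizontal (resp.\ vertical) type it only requires uniform vertical (resp.\ horizontal) fibres, and even ``uniform fibres'' means a common Moran exponent, not constant counts, once the $b_j$ (or $a_i$) differ. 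So the proposed reduction to a ``Bedford--McMullen-like carpet with two-sided uniform fibres'' is unavailable, and the equal-mass Bernoulli measure is the wrong measure in general: the measure that satisfies the two-sided ball bound is the self-affine measure with weights $a_{i_l}^{t_1}b_{j_l}^{D_1-t_1}$ (resp.\ $b_{j_l}^{t_2}a_{i_l}^{D_2-t_2}$), estimated on approximate squares as in Lemma \ref{le12}, and in the mixed case neither of these alone works at all scales --- the paper must use the average $(\mu_1+\mu_2)/2$ of the two, together with $D_1=D_2$ extracted from $F_1=F_2$ via Lemma \ref{le9} (see Lemma \ref{le20}). As written, your construction already fails for a horizontal-type carpet with uniform vertical but non-uniform horizontal fibres, which satisfies \ref{L} and is Ahlfors regular.

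Two further points. First, you invoke $\ref{A}\Rightarrow\ref{B}$ ``as recorded in the abstract'', but in the paper that implication is Proposition \ref{prop5}, whose proof relies on Theorem \ref{th3}; using it here is circular. It can be proved directly, but only by the very comparison you defer: uniform vertical fibres give $D_1=\tilde{E}_1=E_1$ (Lemma \ref{le9}), and one must exclude $D_2>D_1$ by noting (Lemma \ref{le3}) that this forces mixed type and then $D_2\le\tilde{E}_2=E_2<E_1$, a contradiction --- this is also exactly how the cross-direction issue you flag (Assouad maximum attained in a different direction from the box maximum) is resolved, so it is not optional. Second, the two ``Jensen-type'' steps are the real content of the proof: $\dimb K=\dima K\Rightarrow\ref{A}$ rests on $D_i\le\tilde{E}_i$ with equality iff uniform fibres (a short convexity computation), while $\diml K=\dimh K\Rightarrow\ref{L}$ rests on $\tilde{F}_1\le G_1$ (resp.\ $\tilde{F}_2\le G_2$) with equality forcing uniform fibres, which is not a bare Jensen argument: it needs the Lagrange-multiplier form of the maximisers of $g_1$ (Lemma \ref{le5}) and the decomposition $g_1\ge h_1$ with $\max h_1=\tilde{F}_1$ (Lemma \ref{le10}), and in the mixed case it also yields the required extra conclusion $F_1=F_2$, which your sketch omits from \ref{L}. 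These steps need to be supplied before the cycle of equivalences closes.
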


	Utilizing Theorems \ref{th1} and \ref{th3}, we can finally establish the equivalent condition for the coincidence of each pair of dimensions of \bara~carpets. We remark that conditions \ref{H}, \ref{B}, \ref{A} and \ref{L} are increasingly stronger (Proposition \ref{prop5}), and all four are equivalent when $K$ degenerates to a Lalley-Gatzouras carpet (Remark \ref{re2}). It is also worth mentioning that Theorems \ref{th1} and \ref{th3} hold for the full Lalley-Gatzouras class in a  analogous way, although Lalley-Gatzouras class is not generally contained within the \bara\ class.
	
	Another natural question arises: what relationships are possible between the dimensions? See Fraser \cite[Question 4.5]{F14}. With a little more work, the following corollary provides an answer.
	
	\begin{corollary}\label{cor4}
		Let $K$ be a \bara~carpet. We have:
		\begin{enumerate}[(a)]
			\item \label{e20}
			$\diml K=\dimh K=\dimb K=\dima K$ is possible,
			\item \label{e21}
			$\diml K=\dimh K=\dimb K<\dima K$ is not possible,
			\item \label{e22}
			$\diml K=\dimh K<\dimb K=\dima K$ is not possible,
			\item \label{e23}
			$\diml K=\dimh K<\dimb K<\dima K$ is not possible,
			\item \label{e24}
			$\diml K<\dimh K=\dimb K=\dima K$ is possible,
			\item \label{e25}
			$\diml K<\dimh K=\dimb K<\dima K$ is possible,
			\item \label{e26}
			$\diml K<\dimh K<\dimb K=\dima K$ is not possible,
			\item \label{e27}
			$\diml K<\dimh K<\dimb K<\dima K$ is possible.
		\end{enumerate}
	\end{corollary}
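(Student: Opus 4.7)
The plan is to match each of the eight candidate dimension relations with a position in the hierarchy $\ref{L} \Longrightarrow \ref{A} \Longrightarrow \ref{B} \Longrightarrow \ref{H}$, using Theorems \ref{th1} and \ref{th3} to rule out the incompatible configurations and the strict separation of these four conditions (Proposition \ref{prop5}) to produce the realizing carpets.

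For the four impossibility claims I would begin from the observation that any equality among the dimensions propagates upward through the hierarchy. An equality $\diml K = \dimh K$ forces $\diml K = \dima K$ by Theorem \ref{th3}\eqref{e12}, which collapses all four dimensions and therefore rules out (b), (c) and (d), each of which asserts $\diml K = \dimh K$ alongside a strict inequality further up the chain. For (g), the equality $\dimb K = \dima K$ forces $\dimh K = \dima K$ by Theorem \ref{th3}\eqref{e11}, hence $\dimh K = \dimb K$, contradicting the asserted $\dimh K < \dimb K$. No further work is needed here.

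For the four possibility claims I would place a Bara\'nski carpet at each distinct level of the hierarchy. Any carpet satisfying \ref{L} is Ahlfors regular by Theorem \ref{th3}\eqref{e12}, so all four dimensions coincide and (a) is realized (for instance by a Bedford--McMullen carpet with uniform fibres). A carpet satisfying \ref{A} but not \ref{L} yields $\dimh K = \dimb K = \dima K$ via Theorem \ref{th3}\eqref{e11} while $\diml K < \dimh K$, realizing (e). A carpet satisfying \ref{B} but not \ref{A} gives $\dimh K = \dimb K$ via Theorem \ref{th1}\eqref{e42}, together with $\dimb K < \dima K$ (since \ref{A} fails) and $\diml K < \dimh K$ (since \ref{L} then fails a fortiori), realizing (f). Finally, for a carpet where \ref{B} fails I obtain $\dimh K < \dimb K$ from Theorem \ref{th1}\eqref{e42}, and the same contrapositive argument that eliminated (g) forces $\dimb K < \dima K$, while the failure of \ref{L} gives $\diml K < \dimh K$, so the four dimensions are strictly separated and (h) is realized.

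The main obstacle will be producing explicit Bara\'nski carpets at each of these levels, in particular carpets that satisfy \ref{A} (resp. \ref{B}) but not \ref{L} (resp. \ref{A}); these are precisely the witnesses promised by the strict implications in Proposition \ref{prop5}. In the Lalley--Gatzouras subcase all four uniform fibre conditions collapse to one (Remark \ref{re2}), so the separating examples must genuinely exploit the extra geometric freedom of the Bara\'nski class in which the long and short axes of the cylinders need not be uniformly aligned; the examples will be designed so that uniformity in one coordinate direction compensates for non-uniformity in the other at exactly one level of the hierarchy.
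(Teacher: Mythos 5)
Your treatment of the four impossibility cases is correct and is essentially the argument the paper uses: $\diml K=\dimh K$ forces total collapse via Theorem \ref{th3}-\eqref{e12}, killing (c) and (d) (and (b)), and $\dimb K=\dima K$ forces $\dimh K=\dimb K$ via Theorem \ref{th3}-\eqref{e11}, killing (g). The conditional reasoning for the possibility cases is also sound: a carpet with \ref{L} realizes (a), one with \ref{A} but not \ref{L} realizes (e), one with \ref{B} but not \ref{A} realizes (f), and any carpet failing \ref{B} realizes (h).

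The genuine gap is in the existence of the witnesses. You invoke the strict implications of Proposition \ref{prop5} to guarantee carpets satisfying \ref{A} but not \ref{L}, and \ref{B} but not \ref{A}; but in the paper Proposition \ref{prop5} is \emph{proved from} Corollary \ref{cor4} (its strictness for $\ref{L}\Rightarrow\ref{A}$ and $\ref{A}\Rightarrow\ref{B}$ is deduced exactly from cases \eqref{e24} and \eqref{e25}), and more fundamentally those strictness statements are logically equivalent, via Theorems \ref{th1} and \ref{th3}, to the very existence claims you need — so the appeal is circular and supplies nothing. What is actually required, and what you explicitly defer as ``the main obstacle,'' is the construction of concrete mixed-type Bara\'nski carpets at these intermediate levels: since for horizontal or vertical type all four conditions coincide (Remark \ref{re2}), the witnesses for (e) and (f) cannot be of Lalley--Gatzouras/Bedford--McMullen form. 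The paper discharges this by citing Fraser's examples (\cite[Question 4.5]{F14}) for (a), (e), (h) and, for the new case (f), by constructing Example \ref{ex2}: a mixed-type carpet with uniform horizontal but non-uniform vertical fibres for which $D_1=D_2=E_2=F_2$ while $E_1>D_1>F_1$, so that \ref{B} holds but \ref{A} and \ref{L} fail and $\diml K<\dimh K=\dimb K<\dima K$. Without such an explicit example (or a citation playing its role), your proof of the possibility cases (e) and (f) is incomplete; cases (a) and (h) are fine, since a uniform-fibre and a non-uniform-fibre Bedford--McMullen carpet respectively serve as witnesses.
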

	\begin{proof}
		The cases \eqref{e20},\eqref{e21},\eqref{e24} and \eqref{e27} have already been addressed by Fraser, as referenced in  \cite[Question 4.5]{F14}. The cases \eqref{e22} and \eqref{e23} follow directly from Theorem \ref{th3}-\eqref{e12}. The case \eqref{e25} is derived from Example \ref{ex2}, which will be presented later in Section \ref{sec8}. Lastly, the case \eqref{e26} is a consequence of Theorem \ref{th3}-\eqref{e11}.
	\end{proof}
	
	\begin{proposition}\label{prop5}
		Conditions \ref{H}, \ref{B}, \ref{A}, and \ref{L} satisfy 
		$$
		\ref{L} \Longrightarrow \ref{A} \Longrightarrow \ref{B} \Longrightarrow \ref{H},
		$$
		and these implications are strict.
	\end{proposition}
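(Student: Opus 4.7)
The plan is to translate each of the four uniform fibre conditions into a statement about standard dimensions via Theorems \ref{th1} and \ref{th3}, and then to deduce the three implications from the universal ordering $\diml K \le \dimh K \le \dimb K \le \dima K$. Concretely, Theorem \ref{th1} gives \ref{H} $\iff$ $0<\mathcal{H}^{\dimh K}(K)<+\infty$ and \ref{B} $\iff$ $\dimh K=\dimb K$, while Theorem \ref{th3} gives \ref{A} $\iff$ $\dimh K=\dimb K=\dima K$ and \ref{L} $\iff$ $K$ is Ahlfors regular, the latter forcing $\diml K=\dimh K=\dimb K=\dima K$.

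With these translations in place, the chain of implications is essentially automatic. First, \ref{L} collapses all four dimensions to a common value, so in particular $\dimh K=\dima K$, which is \ref{A}. Next, \ref{A} gives $\dimh K=\dima K$, and the sandwich $\dimh K\le \dimb K\le \dima K$ immediately forces $\dimh K=\dimb K$, which is \ref{B}. Finally, \ref{B} says $\dimh K=\dimb K$, and by Corollary \ref{cor1} this is already known to imply $0<\mathcal{H}^{\dimh K}(K)<+\infty$, so \ref{H} follows.

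For strictness, I would quote examples already furnished elsewhere in the paper. The failure of \ref{A}$\Longrightarrow$\ref{L} is witnessed by case \eqref{e24} of Corollary \ref{cor4}, in which $\diml K<\dimh K=\dimb K=\dima K$; the failure of \ref{B}$\Longrightarrow$\ref{A} is witnessed by case \eqref{e25} (the carpet constructed in Example \ref{ex2}), in which $\dimh K=\dimb K<\dima K$; and the failure of \ref{H}$\Longrightarrow$\ref{B} is precisely the content of Corollary \ref{cor1} together with the relation \eqref{e37}, where $\dimh K<\dimb K$ coexists with $0<\mathcal{H}^{\dimh K}(K)<+\infty$. The only substantive difficulty in proving Proposition \ref{prop5} therefore lies upstream, in the proofs of Theorems \ref{th1} and \ref{th3} and in the explicit construction of the witnessing \bara\ carpets; the proposition itself is essentially bookkeeping.
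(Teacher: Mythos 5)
Your proposal is correct and follows essentially the same route as the paper: both translate \ref{H}, \ref{B}, \ref{A}, \ref{L} into dimension/measure statements via Theorems \ref{th1} and \ref{th3}, deduce the implications from the standard ordering $\diml K\leq \dimh K\leq \dimb K\leq \dima K$ (for compact sets), and obtain strictness from Corollary \ref{cor4}-\eqref{e24}, Corollary \ref{cor4}-\eqref{e25} (Example \ref{ex2}), and Corollary \ref{cor1} (Example \ref{ex1}), exactly the references the paper cites.
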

	\begin{proof}
		The strict implication $\ref{L} \Longrightarrow \ref{A}$ follows from Theorem \ref{th3} and  Corollary \ref{cor4}-\eqref{e24}. The strict implication $\ref{A} \Longrightarrow \ref{B}$ follows from Theorems \ref{th1} and  \ref{th3}, along with Corollary \ref{cor4}-\eqref{e25}.
		The strict implication $\ref{B} \Longrightarrow \ref{H}$ follows from Theorem \ref{th1} and Corollary \ref{cor1}.
	\end{proof}
	
	\begin{remark}
		For a \bara\ carpet, all the four conditions can be verified  through calculations on the parameters in its IFS. This is apparent for \ref{B}, \ref{A} and \ref{L}, while for \ref{H}, it is also true via its equivalent characterization \ref{H'} (Proposition \ref{le15}) introduced later in Section \ref{sec3}.
	\end{remark}
	
	Before ending this section, let us look at the higher dimensional case. In 2017, Das and Simmons \cite{DS17} showed that the variational formula for the Hausdorff dimension of \bara\ sponges (which are higher dimensional \bara\ carpets) does not necessarily hold true via Bernoulli measures, but rather a broader class known as  \textit{pseudo-Bernoulli measures}. In 2023, Kolossváry \cite{K23} determined the box dimension as a complex Ledrappier-Young type variational formula. It remains unclear how to extend the considerations in this paper to \bara\ sponges.
	
	\vspace{0.2cm}

	The paper is organized as follows. In Section \ref{sec2}, we review the formulas for the Hasudorff, box, Assouad and lower dimensions of the \bara~carpets, and propose four conditions: \ref{H}, \ref{B}, \ref{A}, and \ref{L}. In Section \ref{sec3}, we prepare some lemmas and provide an equivalent condition \ref{H'} to \ref{H} in Subsection \ref{sec3.1}, and review the concept of ``approximate squares'' in Subsection \ref{sec3.2}. We first prove ``$\Longrightarrow$'' in Theorem \ref{th1}-\eqref{e41}  in Section \ref{sec4}. The proof of Theorem \ref{th1}-\eqref{e42}  is in Section \ref{sec6}, and  ``$\Longrightarrow$'' in Theorem \ref{th1}-\eqref{e40} is in Section \ref{sec5}. The proof of Corollary \ref{cor1} is postponed to  Section \ref{sec8}, where we also provide an example, Example \ref{ex1}, on which \eqref{e37} holds. The proof of Theorem \ref{th3} is in Section \ref{sec7}.

	\section{Conditions}\label{sec2}
	
	In this section, we review the Hausdorff, box, Assouad and lower dimensions of \bara~carpets, and introduce the four types of uniform fibre condition \ref{H}, \ref{B}, \ref{A} and \ref{L}. 
	
	Before stating the formulas for the Hausdorff and box dimensions of a \bara~carpet $K$, let us look at the following notations. 
	
	For a set $A$, denote $\text{int}(A)$ as the \textit{interior} of $A$ and $\#A$  as the \textit{cardinality} of $A$.  Let $\pi_1,\pi_2:\mathbb{R}^2\to \mathbb{R}$ be the \textit{orthogonal projections} from $\mathbb{R}^2$ to the $x$-axis and $y$-axis, respectively.
	For a \bara\ carpet, let 
	$$
	\mathcal{S}=\left \{\bm{q}=(q_1,\cdots, q_d)\in \mathbb{R}^d:q_l\geq 0,  \sum_{l=1}^d q_l=1\right \},
	$$
	and for $i=1,\cdots, r$, $j=1,\cdots, s$ and $\bm{q}=(q_1,\cdots, q_d)\in \mathcal{S}$, define
	$$
	\begin{aligned}
		I_i&=\big\{l\in \{ 1,\cdots, d\}:i_l=i\big\}, \quad  & & R_i(\bm{q})=\sum_{l\in I_i}q_l,\\
		J_j&=\big\{l\in \{ 1,\cdots, d\}:j_l=j\big\}, & & S_j(\bm{q})=\sum_{l\in J_j} q_l,
	\end{aligned}
	$$
	and 
	$$
	\begin{aligned}
		\mathcal{S}_1&=\left\{ \bm{q}\in \mathcal{S}:\sum_{i=1}^r R_i(\bm{q})\log a_i \geq \sum_{j=1}^s S_j(\bm{q})\log b_j\right\},\\
		\mathcal{S}_2&=\left\{ \bm{q}\in \mathcal{S}:\sum_{i=1}^r R_i(\bm{q})\log a_i \leq  \sum_{j=1}^s S_j(\bm{q})\log b_j\right\}.
	\end{aligned}
	$$
	Recall that 
	$$
	\mathcal{J}=\big \{(i_1,j_1),\cdots,(i_d,j_d)\big\} \subseteq \{1,\cdots,r\}\times \{1,\cdots, s \big \}.
	$$
	Let 
	$$
	\mathcal{J}_1=\big\{i\in \{1,\cdots, r\}:I_i\neq \emptyset\big\},\quad \text{ and }\quad 
	\mathcal{J}_2=\big\{j\in \{1,\cdots, s\}:J_j\neq \emptyset\big\}.
	$$ 
	Then $\mathcal{J}\subseteq \mathcal{J}_1\times \mathcal{J}_2$ and  $\mathcal{J}_1=\pi_1(\mathcal{J})$, $\mathcal{J}_2=\pi_2(\mathcal{J})$.

	For $\bm{q}\in \mathcal{S}$, define a function $g:\mathcal{S}\to \mathbb{R}$ by 
	$$
	g(\bm{q})=\left \{\begin{aligned}
		&\frac{\sum_{i=1}^r R_i(\bm{q})\log R_i(\bm{q})}{\sum_{i=1}^r R_i(\bm{q}) \log a_i}+ \frac{\sum_{l=1}^d q_l \log q_l -\sum_{i=1}^r R_i(\bm{q})\log R_i(\bm{q})}{\sum_{j=1}^s S_j(\bm{q})\log b_j}, \quad & & \text{ if } \bm{q}\in \mathcal{S}_1, \\
		&\frac{\sum_{j=1}^s S_j(\bm{q}) \log S_j(\bm{q})}{\sum_{j=1}^s S_j(\bm{q}) \log b_j}+\frac{\sum_{l=1}^d q_l \log q_l -\sum_{j=1}^s S_j(\bm{q})\log S_j(\bm{q})}{\sum_{i=1}^r R_i(\bm{q})\log a_i}, & &\text{ if } \bm{q} \in \mathcal{S}_2 \setminus \mathcal{S}_1,
	\end{aligned} \right.
	$$
	where $0\cdot \log 0=0 $ for convention. 
	
	Let $t_1,t_2$ be the unique real numbers satisfying 
	\begin{equation}\label{e1}
		\sum_{i\in \mathcal{J}_1} a_i^{t_1}=1, \quad\quad \sum_{j\in \mathcal{J}_2} b_j^{t_2}=1,
	\end{equation}
	and let $D_1,D_2$ be the unique real numbers satisfying 
	\begin{equation}\label{e10}
		\sum_{l=1}^d a_{i_l}^{t_1} b_{j_l}^{D_1-t_1}=1, \quad\quad \sum_{l=1}^d b_{j_l}^{t_2} a_{i_l}^{D_2-t_2}=1.
	\end{equation}

	\begin{proposition}(\cite[Theorems A,B]{B07})\label{prop2}
		Let $K$ be a \bara~carpet. We have
		$$
		\dimh K=\max_{\bm{q}\in \mathcal{S}}g(\bm{q}), \quad \dimb K=\max \{ D_1,D_2\}.
		$$
	\end{proposition}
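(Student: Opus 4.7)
The plan is to code $K$ symbolically and analyze the geometry of approximate squares. Write $\Sigma=\{1,\dots,d\}^{\mathbb{N}}$ and let $\Pi\colon\Sigma\to K$ be the natural coding map; a word $\omega=(l_1,\dots,l_n)$ produces the cylinder rectangle $R_\omega=\psi_{l_1}\circ\cdots\circ\psi_{l_n}([0,1]^2)$ of width $w_n=a_{i_{l_1}}\cdots a_{i_{l_n}}$ and height $h_n=b_{j_{l_1}}\cdots b_{j_{l_n}}$. For any probability vector $\bm{q}\in\mathcal{S}$ and the associated Bernoulli measure $\mu_{\bm{q}}=\bm{q}^{\mathbb{N}}$, Birkhoff's theorem yields $\tfrac{1}{n}\log w_n\to\sum_i R_i(\bm{q})\log a_i$ and $\tfrac{1}{n}\log h_n\to\sum_j S_j(\bm{q})\log b_j$, so the decomposition $\mathcal{S}=\mathcal{S}_1\cup\mathcal{S}_2$ exactly distinguishes the $\bm{q}$'s for which typical cylinders are short-and-wide from those for which they are tall-and-thin.

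For the Hausdorff lower bound, for each $\bm{q}$ I push $\mu_{\bm{q}}$ onto $K$ and compute its local dimension at a typical point. When $\bm{q}\in\mathcal{S}_1$ the natural ball scale is $h_n$; I enlarge $R_\omega$ horizontally by grouping it with the cylinders sharing its first $n'$ horizontal digits, where $n'<n$ is chosen so that $a_{i_{l_1}}\cdots a_{i_{l_{n'}}}\approx h_n$. Birkhoff gives $n'/n\to(\sum_j S_j(\bm{q})\log b_j)/(\sum_i R_i(\bm{q})\log a_i)$, and the $\mu_{\bm{q}}$-mass of the resulting approximate square factors into a column contribution weighted by $R_i(\bm{q})$ over the first $n'$ levels and a full-word contribution weighted by $q_l$ over the remaining $n-n'$ levels. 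Dividing logs by $\log h_n$ and sending $n\to\infty$ reproduces exactly the first branch of $g(\bm{q})$; the case $\bm{q}\in\mathcal{S}_2$ is symmetric. The mass distribution principle then yields $\dimh K\geq g(\bm{q})$, and hence $\dimh K\geq\max_{\bm{q}\in\mathcal{S}}g(\bm{q})$.

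The reverse inequality $\dimh K\leq\max_{\bm{q}\in\mathcal{S}}g(\bm{q})$ is the main obstacle. I would follow the Ledrappier--Young / Kenyon--Peres strategy: cover $K$ efficiently by approximate squares at dyadic scales, partition them by the length of their horizontal ancestor (resp.\ vertical ancestor, in the $\mathcal{S}_2$ regime), and bound the resulting $s$-Hausdorff sum by a partition function over words. A Lagrangian optimization over the empirical word distribution identifies this supremum with $\max_{\bm{q}}g(\bm{q})$; the gluing between the $\mathcal{S}_1$ and $\mathcal{S}_2$ contributions is controlled by the fact that the two branches of $g$ agree on the boundary $\sum_i R_i\log a_i=\sum_j S_j\log b_j$. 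The delicate bookkeeping across the two regimes is what forces the piecewise definition of $g$.

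For the box dimension I would count approximate squares directly. For each length-$n$ word $\omega$, complete $R_\omega$ to an approximate square by refining in the direction of weaker contraction until widths and heights match. Summing the counts produces, up to bounded constants, partition functions whose vanishing in the exponent $s$ reads $\sum_l a_{i_l}^{t_1}b_{j_l}^{s-t_1}=1$ and $\sum_l b_{j_l}^{t_2}a_{i_l}^{s-t_2}=1$, with $t_1,t_2$ the Moran exponents of the coordinate projections from \eqref{e1}; these identify the candidate exponents as $D_1$ and $D_2$ of \eqref{e10}. Since each covering is box-optimal up to bounded factors and the larger exponent dominates the $r\to 0$ asymptotics of the covering count, $\dimb K=\max\{D_1,D_2\}$.
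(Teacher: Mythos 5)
First, a point of comparison: the paper does not prove this proposition at all — it is quoted verbatim from Bara\'nski \cite{B07} (Theorems A and B), so the relevant benchmark is Bara\'nski's original argument. Your lower bound for $\dimh K$ (push Bernoulli measures $\mu_{\bm q}$ onto $K$, compute local dimensions along approximate squares whose side scale is matched via the ratio $\sum_j S_j(\bm q)\log b_j/\sum_i R_i(\bm q)\log a_i$, then apply the mass distribution principle) and your counting argument for $\dimb K=\max\{D_1,D_2\}$ are in outline the same as the cited proof and are sound, modulo routine care with vectors $\bm q$ having zero entries.

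The genuine gap is the upper bound $\dimh K\le\max_{\bm q\in\mathcal S}g(\bm q)$, which you yourself identify as the main obstacle and then dispatch with ``cover by approximate squares at dyadic scales, bound the $s$-Hausdorff sum by a partition function, and do a Lagrangian optimization over the empirical word distribution.'' As stated, that bookkeeping is scale-by-scale and does not explain how the cover is adapted to individual points; an optimization of a single-scale partition function over empirical distributions is exactly the computation that produces the \emph{box} dimension $\max\{D_1,D_2\}$, which in general strictly exceeds $\max_{\bm q}g(\bm q)$ (this strict inequality is the whole point of the present paper). The real difficulty is pointwise and multi-scale: one must efficiently cover the points whose empirical digit distributions fail to converge and which alternate infinitely often between $1$-type and $2$-type approximate squares, i.e.\ drift between the $\mathcal S_1$ and $\mathcal S_2$ regimes along different scales; the agreement of the two branches of $g$ on $\mathcal S_1\cap\mathcal S_2$ says nothing about this oscillation. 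Bara\'nski handles it by decomposing $K$ according to the accumulation sets of the empirical distributions and proving a local covering estimate for each piece (\cite[pp.~232--235]{B07}), which is precisely what the present paper isolates as Lemma \ref{le14}, namely $\dimh K(\mathcal P)\le\max_{\bm q\in\mathcal P}g(\bm q)$ for closed $\mathcal P\subseteq\mathcal S$. Without an argument of this type your upper bound remains a plan rather than a proof; supplying it (or quoting \cite{B07} for it, as the paper does) is what is needed to close the argument.
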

	For a \bara~carpet $K$, we say $K$ has \textit{uniform vertical fibres} if there is  $t\geq0$ such that 
	\begin{equation}\label{e39}
		\sum_{l\in I_i} b_{j_l}^t=1,\quad      \text{ for all } i\in \mathcal{J}_1.
	\end{equation}
	Similarly, we say $K$ has \textit{uniform horizontal fibres} if there is  $t\geq 0$ such that 
	$$
	\sum_{l\in J_j} a_{i_l}^t=1,\quad \text{ for all } j\in \mathcal{J}_2.
	$$
	Note that if $K$ has uniform vertical fibres, then for every $x$ in $\pi_1(K)$, we have 
	$$
	\dimb \big( K\cap (\{x\}\times[0,1]) \big)=t,
	$$
	where $t$ is the number satisfying \eqref{e39}.
	\vspace{0.2cm}
	
	Now we give the definitions of \textit{uniform fibre condition of Hausdorff type} (\ref{H} for short)   and \textit{uniform fibre condition of Box type} (\ref{B} for short): 
	\begin{equation}
		K \text{ has } \left\{
		\begin{aligned}
			&\text{uniform vertical fibres}, & & \text{ if }\bm{q}\in \mathcal{S}_1 \text{ for some }\bm{q} \text{ with }g(\bm{q})=\dimh K, \\
			&\text{uniform horizontal fibres}, & & \text{ if }\bm{q}\in\mathcal{S}_2 \text{ for some }\bm{q} \text{ with }g(\bm{q})=\dimh K,
		\end{aligned} \right. \tag{\textbf{u.f.H}}\label{H}
	\end{equation}
	and 
	\begin{equation}
		K \text{ has } \left\{
		\begin{aligned}
			&\text{uniform vertical fibres}, & & \text{ if } D_1>D_2, \\
			&\text{uniform horizontal fibres}, & & \text{ if }D_1<D_2,\\
			&\text{uniform vertical \textbf{or} horizontal fibres}, & & \text{ if }D_1=D_2.
		\end{aligned} \right. \tag{\textbf{u.f.B}}\label{B}
	\end{equation}
	Note that condition \ref{H} is not easy to verify, but it is  intuitive. We will propose an equivalent condition, \ref{H'}, in Section \ref{sec3.1}, which resembles condition \ref{B}.
	\vspace{0.2cm}	
	
	Next, in order to propose conditions \ref{A} and \ref{L}, we introduce the formulas of Assouad and lower dimensions of $K$ as presented in \cite{F14}. For $l=1,\cdots, d$, let $S_{1,l},S_{2,l}$ be the unique solutions of 
	\begin{equation}\label{e15}
		\sum_{m\in I_{i_l}} b_{j_{m}}^{S_{1,l}}=1,\quad \quad 
		\sum_{m\in J_{j_l}} a_{i_{m}}^{S_{2,l}}=1.
	\end{equation}
	It follows from the definition that for $l\neq m$, if ${i_l}={i_m}$, then $S_{1,l}=S_{1,m}$. Furthermore, $K$ has uniform vertical (resp. horizontal) fibres if and only if  all $S_{1,l}$ (resp. $S_{2,l})$ are the same.

	Using the terminology by Fraser \cite{F14},  we say a \bara~carpet $K$ is of \textit{horizontal type} if $a_{i_l}\geq b_{j_l}$ for all $l=1,\cdots, d$; of \textit{vertical type} if $a_{i_l}\leq b_{j_l}$ for all $l=1,\cdots, d$; and of \textit{mixed type} otherwise. Recall $t_1=\dimh \pi_1(K)$ and $t_2= \dimh \pi_2(K)$, which satisfy equation \eqref{e1}. Set 
	\begin{equation}\label{e79} \left\{
		\begin{aligned}
			E_1&=t_1+\max_{l\in \{1,\cdots, d\}} S_{1,l},& & E_2=-1, \quad & & \text{ if } K \text{ is of horizontal type},
			\\
			E_1&=-1,\quad& & E_2=t_2+\max_{l\in \{1,\cdots, d\}} S_{2,l}, & & \text{ if } K \text{ is of vertical type},
			\\
			E_1&=t_1+\max_{l\in \{1,\cdots, d\}} S_{1,l},\quad& & E_2=t_2+\max_{l\in \{1,\cdots, d\}} S_{2,l}, & & \text{ if } K \text{ is of mixed type},
		\end{aligned} \right.
	\end{equation}
	and 
	\begin{equation}\label{e108} \left\{
		\begin{aligned}
			F_1&=t_1+\min_{l\in \{1,\cdots, d\}} S_{1,l},\quad& & F_2=3, \quad & & \text{ if } K \text{ is of horizontal type},
			\\
			F_1&=3,\quad& & F_2=t_2+\min_{l\in \{1,\cdots, d\}} S_{2,l}, \quad & & \text{ if } K \text{ is of vertical type},
			\\
			F_1&=t_1+\min_{l\in \{1,\cdots, d\}} S_{1,l},\quad& & F_2=t_2+\min_{l\in \{1,\cdots, d\}} S_{2,l}, \quad & & \text{ if } K \text{ is of mixed type}.
		\end{aligned} \right.
	\end{equation}
	\begin{proposition}(\cite[Theorems 2.12, 2.13]{F14})\label{prop3}
		Let $K$ be a \bara~carpet. We have 
		$$
		\dima K=\max \{E_1,E_2\}, \quad
		\diml K=\min \{F_1,F_2\}.
		$$
	\end{proposition}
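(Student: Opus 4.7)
The plan is to prove both formulas by combining the approximate-square decomposition recalled in Subsection~\ref{sec3.2} with the weak-tangent technique of Mackay and Fraser. The key reduction is that, for $x\in K$ and $0<r<R$, the covering number $N(K\cap B(x,R),r)$ is comparable up to multiplicative constants with the number of $r$-approximate squares lying inside a bounded family of $R$-approximate squares containing $x$. Under this reduction, $\dima K$ is the exponential supremum and $\diml K$ the exponential infimum of such nested counts as $R,r$ vary, so the task reduces to counting approximate squares.

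For the upper bound $\dima K\le\max\{E_1,E_2\}$, I would first treat the horizontal-type case: an $R$-approximate square $Q$ is realised as a vertical stack of cylinders within a single $x$-fibre. Refining $Q$ to scale $r=\rho R$ forces a descent in the $x$-direction, which introduces $\approx\rho^{-t_1}$ new $x$-columns by \eqref{e1}; within each such column the number of cylinders fitting into height $R$ is controlled by $\sum_{m\in I_{i_l}}b_{j_m}^{S_{1,l}}=1$, yielding at most $\rho^{-S_{1,l}}$ refinements per column. Taking the worst column gives the exponent $E_1$; the vertical-type case is symmetric. In the mixed-type regime I would decompose the symbolic code of a deep cylinder into maximal runs of constant orientation and bound each run separately, then multiply, to obtain $\max\{E_1,E_2\}$. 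Replacing $\max$ by $\min$ throughout gives the parallel bound $\diml K\ge\min\{F_1,F_2\}$.

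For the matching lower bound $\dima K\ge E_1$, I would use weak tangents. Pick $l^\ast$ realising $\max_l S_{1,l}$, and blow up the nested cylinders $\psi_{l^\ast}^n(K)$ back to unit size; along a subsequence these converge in Hausdorff distance to a compact set $F\subseteq[0,1]^2$ that contains a bi-Lipschitz copy of $\pi_1(K)\times C_{l^\ast}$, where $C_{l^\ast}$ is the uniform-vertical-fibre attractor of the subsystem over column $i_{l^\ast}$, of dimension $S_{1,l^\ast}$. Since Assouad dimension is monotone under weak tangents and additive on such product sets (with an Ahlfors-regular factor), one gets $\dima K\ge\dima F\ge t_1+S_{1,l^\ast}=E_1$; the bound $\dima K\ge E_2$ follows symmetrically by switching rows and columns. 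For the upper bound $\diml K\le F_1$, I would dually centre a ball at a point lying in iterates of the cylinder realising $\min_l S_{1,l}$ and verify that the covering number grows no faster than $C(R/r)^{F_1}$ there, exploiting that this local neighbourhood splits as a product with a thin vertical fibre.

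The main obstacle will be the mixed-type regime, where the dominant axis of an approximate square flips along generic cylinder codes and a single monotone recursion is unavailable. My plan is to parse the symbolic code into maximal monotone blocks, estimate the approximate-square count block by block, and then pass to subsequences along which the orientation stabilises so that the limiting weak tangent inherits a clean product structure. The delicate step is verifying that the product factor can be taken to be the \emph{entire} set $\pi_1(K)$ or $\pi_2(K)$, rather than a proper subset; this is where the nontriviality assumption \eqref{e38}, ensuring that $K$ is not contained in a single horizontal or vertical line, will be essential.
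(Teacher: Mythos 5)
The paper does not actually prove Proposition \ref{prop3}: it is imported verbatim from Fraser \cite[Theorems 2.12, 2.13]{F14}, whose proof is exactly the combination you outline (approximate-square counting for one inequality, weak tangents in the spirit of Mackay \cite{M11} for the other). So you are reconstructing the cited argument rather than proposing a different route; judged as a proof, however, your sketch has a genuine gap at its central step, the weak-tangent lower bound.

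First, ``blowing up the nested cylinders $\psi_{l^\ast}^n(K)$ back to unit size'' is not a weak tangent: undoing the affine map $\psi_{l^\ast}^n$ just returns $K$, and weak tangents must be produced by \emph{similarities}; what has to be rescaled is the approximate square sitting inside the cylinder, i.e.\ the full-height slice of $\psi_{(l^\ast)^n}([0,1]^2)$ of width comparable to its height $B_{(l^\ast)^n}$. More seriously, that construction yields a tangent containing a copy of $\pi_1(K)\times C_{l^\ast}$ only when $a_{i_{l^\ast}}>b_{j_{l^\ast}}$, so that the iterated cylinder is wider than tall. In the mixed class the column realising $\max_l S_{1,l}$ may consist entirely of maps with $a_{i_l}\le b_{j_l}$ (for instance columns of widths $0.1,\,0.9$, rows of heights $0.4,\,0.4,\,0.2$, with the two maps of the narrow column tall and the single map of the wide column flat); then $\psi_{(l^\ast)^n}([0,1]^2)$ is tall and thin, its similarity blow-up at the width scale captures only a thin horizontal window of the fibre structure, and the claimed product $\pi_1(K)\times C_{l^\ast}$ does not appear in the limit. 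This is precisely where the real work in \cite{F14} lies, and your contingency plan (parsing codes into monotone blocks and passing to subsequences along which the orientation stabilises) does not address it, because the obstruction concerns which column attains the maximum, not the code of a generic point. Two secondary issues: the upper bound obtained by ``bounding each monotone block and multiplying'' only survives if the per-block multiplicative constants are exactly $1$, which forces you to use the identities $\sum_{i\in\mathcal{J}_1}a_i^{t_1}=1$ and $\sum_{m\in I_{i_l}}b_{j_m}^{S_{1,l}}=1$ from \eqref{e1} and \eqref{e15} rather than generic comparability constants, since the number of blocks is unbounded as $R/r\to\infty$; and ``replacing $\max$ by $\min$'' does not by itself deliver the lower-dimension statements, because $\diml K$ requires a uniform lower bound on the count over \emph{all} approximate squares together with an upper bound at specially chosen ones (via the dual inequality $\diml K\le\diml \hat K$ for weak tangents $\hat K$), and the same orientation problem for the extremal column recurs there.
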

	
	For a \bara~carpet $K$, we say it satisfies the \textit{uniform fibre condition of Assouad type} (\ref{A} for short) if 
	\begin{equation}
		K \text{ has} \left\{ 
		\begin{aligned}
			&\text{uniform vertical fibres}, & & \text{ if } E_1>E_2, \\
			&\text{uniform horizontal fibres}, & & \text{ if }E_1<E_2,\\
			&\text{uniform vertical \textbf{or} horizontal fibres}, & & \text{ if }E_1=E_2,
		\end{aligned}
		\right.\tag{\textbf{u.f.A}}\label{A}
	\end{equation}
	or \textit{uniform fibre condition of Lower type} (\ref{L} for short) if 
	\begin{equation}
		K \text{ has} \left\{ 
		\begin{aligned}
			&\text{uniform vertical fibres}, & & \text{ if }K\text{ is of horizontal type}, \\
			&\text{uniform horizontal fibres}, & & \text{ if }K\text{ is of vertical type},\\
			&\text{uniform vertical \textbf{and} horizontal fibres \textbf{and} } F_1=F_2, & & \text{ if } K \text{ is of mixed type}.
		\end{aligned}
		\right.\tag{\textbf{u.f.L}}\label{L}
	\end{equation}

	\begin{remark}\label{re2}
		Note that when $K$ is of horizontal or vertical type, there is a dichotomy by Fraser \cite[Corollary 2.14]{F14}:
		\begin{equation}\label{e102}
			\begin{aligned}
				\text{ either }\quad& \diml K< \dimh K <\dimb K <\dima K,
				\\
				\text{ or }\quad& \diml K =\dimh K =\dimb K =\dima K.
			\end{aligned}
		\end{equation}
		Indeed, in this case we can see that 
		either $\mathcal{S}_1=\mathcal{S}$ or $\mathcal{S}_2=\mathcal{S}$, which implies that $\ref{H} \Longrightarrow \ref{L}$. Combining this with Proposition \ref{prop5}, we further see that  
		$$
		\ref{L} \iff \ref{A} \iff \ref{B} \iff \ref{H}.
		$$
		So by Theorems \ref{th1} and \ref{th3}, we can strengthen equation \eqref{e102} to
		$$
		\begin{aligned}
			\text{ either }\quad& \diml K< \dimh K <\dimb K <\dima K \text{ and } \mathcal{H}^{\dimh K }(K)=+\infty,
			\\
			\text{ or }\quad & \diml K =\dimh K =\dimb K =\dima K \text{ and } 0< \mathcal{H}^{\dimh K}(K)<+\infty.
		\end{aligned}
		$$
	\end{remark}

	\section{Observations on the function $g$ and approximate squares}\label{sec3}
	For a \bara~carpet $K$, recall that in Proposition \ref{prop2}, $\dimh K=\max_{\bm{q}\in \mathcal{S}} g(\bm{q})$. The expression of function $g$ is complex. In Subsection \ref{sec3.1}, we prepare some lemmas to analyze the property of the function $g$. In Subsection \ref{sec3.2}, We review the ``approximate square'' method to help us estimate the Hausdorff measure of $K$.
	
	\subsection{Some observations on $g$}\label{sec3.1}
	For $\bm{q}\in \mathcal{S}$, define 
	$$ \left \{
	\begin{aligned}
		RR(\bm{q}) :&=\sum_{i=1}^r R_i(\bm{q}) \log R_i(\bm{q}),\quad
		\\
		SS(\bm{q}) :&=\sum_{j=1}^s S_j(\bm{q}) \log S_j(\bm{q}),\quad
		\\
		QQ(\bm{q}) :&=\sum_{l=1}^d q_l \log q_l,
	\end{aligned} \right.
	$$
	and 
	\begin{equation} \label{e83}
		\left\{
		\begin{aligned}
			RA(\bm{q}) :&=\sum_{i=1}^r R_i(\bm{q}) \log a_i=\sum_{l=1}^d q_l \log a_{i_l},\quad
			\\
			SB(\bm{q}) :&=\sum_{j=1}^s S_j(\bm{q}) \log b_j=\sum_{l=1}^d q_l \log b_{j_l}.
		\end{aligned} \right.
	\end{equation}
	Define $g_1,g_2:\mathcal{S}\to \mathbb{R}$ by  
	\begin{equation}\label{e6}
		\begin{aligned}
			g_1(\bm{q})&:=\frac{RR(\bm{q})}{RA(\bm{q})}+\frac{QQ(\bm{q})-RR(\bm{q})}{SB(\bm{q})},
			\\
			g_2(\bm{q})&:=\frac{SS(\bm{q})}{SB(\bm{q})}+\frac{QQ(\bm{q})-SS(\bm{q})}{RA(\bm{q})},
		\end{aligned}
	\end{equation}
	so that the function $g$ can be written as 
	\begin{equation}\label{e84}
		g(\bm{q})=\left \{\begin{aligned}
			&g_1(\bm{q}), \quad & & \text{ if } \bm{q}\in \mathcal{S}_1, \\
			&g_2(\bm{q}), & &\text{ if } \bm{q} \in \mathcal{S}_2 \setminus \mathcal{S}_1.
		\end{aligned} \right.
	\end{equation}
	Let 
	\begin{equation}\label{e80}
		G_1:=\max_{\bm{q}\in \mathcal{S}} g_1(\bm{q})\quad \text{ and } \quad G_2:=\max_{\bm{q}\in \mathcal{S}} g_2(\bm{q}).
	\end{equation}
	We introduce another \textit{uniform fibre condition of Hausdorff type} (\ref{H'} for short):
	\begin{equation}
		K \text{ has } \left\{
		\begin{aligned}
			&\text{uniform vertical fibres,} & & \text{ if } G_1>G_2, \\
			&\text{uniform horizontal fibres,} & & \text{ if }G_1<G_2,\\
			&\text{uniform vertical \textbf{and} horizontal fibres,} & & \text{ if }G_1=G_2,
		\end{aligned} \right. \tag{\textbf{u.f.H$'$}}\label{H'}
	\end{equation}
	which is similar in form to \ref{B}.
	\begin{proposition}\label{le15}
		The conditions \ref{H} and \ref{H'} are equivalent.
	\end{proposition}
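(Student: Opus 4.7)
My plan is to reduce both \ref{H} and \ref{H'} to a trichotomy on the sign of $G_1-G_2$, via the pointwise identity $g(\bm{q})=\max(g_1(\bm{q}),g_2(\bm{q}))$ on $\mathcal{S}$, which in turn yields $\dim_H K=\max(G_1,G_2)$.

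First, directly from \eqref{e6}, I would establish the algebraic identity
\[
g_1(\bm{q})-g_2(\bm{q}) \;=\; \bigl(RR(\bm{q})+SS(\bm{q})-QQ(\bm{q})\bigr)\left(\frac{1}{RA(\bm{q})}-\frac{1}{SB(\bm{q})}\right).
\]
Interpreting $\bm{q}$ as a probability measure on $\{1,\dots,d\}$, the first factor equals $H(\bm{q})-H(\pi_1\bm{q})-H(\pi_2\bm{q})\leq 0$ by Shannon subadditivity (using that $l\mapsto(i_l,j_l)$ is injective, so $H(\bm{q})=H(\pi_1\bm{q},\pi_2\bm{q})$). Since $RA,SB<0$, the second factor has sign opposite to $RA-SB$, so $g_1-g_2$ has the same sign as $RA-SB$. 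In particular $g_1\geq g_2$ on $\mathcal{S}_1$, $g_1\leq g_2$ on $\mathcal{S}_2$, with equality on $\mathcal{S}_1\cap\mathcal{S}_2$; combined with the piecewise definition \eqref{e84}, this gives $g=\max(g_1,g_2)$ on $\mathcal{S}$, hence $\dim_H K=\max(G_1,G_2)$.

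Next I would split into cases. If $G_1>G_2$, any maximizer $\bm{q}$ of $g$ must satisfy $g_1(\bm{q})=G_1>g_2(\bm{q})$, forcing $\bm{q}\in\mathcal{S}_1\setminus\mathcal{S}_2$; both \ref{H} and \ref{H'} then require only uniform vertical fibres. The case $G_1<G_2$ is symmetric.

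The main obstacle is the borderline case $G_1=G_2$, where \ref{H'} requires \emph{both} uniform vertical and horizontal fibres, while a priori \ref{H} only requires those matching where maximizers actually live. To complete the equivalence I would argue: if every maximizer of $g$ lay strictly in $\mathcal{S}_2\setminus\mathcal{S}_1$, then at such a $\bm{q}^*$, $g_1(\bm{q}^*)=g_2(\bm{q}^*)=G_1$ combined with $RA(\bm{q}^*)\neq SB(\bm{q}^*)$ forces via the identity above the equality case $RR(\bm{q}^*)+SS(\bm{q}^*)=QQ(\bm{q}^*)$ of subadditivity, i.e., $\bm{q}^*$ is a product measure $q_l^*=R_{i_l}(\bm{q}^*)\,S_{j_l}(\bm{q}^*)$. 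On product measures $g_1\equiv g_2=RR/RA+SS/SB$ is separable, with unique unconstrained maximum $t_1+t_2$ attained at $R_i=a_i^{t_1}$, $S_j=b_j^{t_2}$; the normalization $\sum_l q_l^*=1$ then forces $\mathcal{J}=\mathcal{J}_1\times\mathcal{J}_2$. In this rectangular case $K$ automatically has both uniform vertical fibres (with $t=t_2$) and uniform horizontal fibres (with $t=t_1$), so \ref{H} and \ref{H'} are both satisfied. The symmetric argument (ruling out all maximizers lying in $\mathcal{S}_1\setminus\mathcal{S}_2$) completes the equivalence.
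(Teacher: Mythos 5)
Your first two steps are sound and in fact coincide with the paper's own route: the identity for $g_1-g_2$ together with subadditivity is exactly Lemma \ref{le2} (via Lemma \ref{le1}), the consequence $\dimh K=\max\{G_1,G_2\}$ is Lemma \ref{le7}, and your treatment of the cases $G_1>G_2$ and $G_1<G_2$ (all maximizers of $g$ forced into $\mathcal{S}_1\setminus\mathcal{S}_2$, resp.\ $\mathcal{S}_2\setminus\mathcal{S}_1$, so that both conditions reduce to the same single fibre requirement) is correct.

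The borderline case $G_1=G_2$, however, has a genuine gap. First, a repairable slip: for an arbitrary maximizer $\bm{q}^*$ of $g$ lying in $\mathcal{S}_2\setminus\mathcal{S}_1$ you only know $g_2(\bm{q}^*)=G_2$; the asserted equality $g_1(\bm{q}^*)=G_1$ does not follow. (You can fix this by taking $\bm{q}^*$ to be a maximizer of $g_1$: under your standing assumption it cannot lie in $\mathcal{S}_1$, and then $G_1=g_1(\bm{q}^*)\le g_2(\bm{q}^*)\le G_2=G_1$ gives the equality.) The serious problem is the next step. The equality case of Lemma \ref{le1} — namely that $QQ=RR+SS$ forces $q_l=R_{i_l}(\bm{q})S_{j_l}(\bm{q})$ \emph{and} $\mathcal{J}_1\times\mathcal{J}_2=\mathcal{J}$ — is only valid for $\bm{q}\in\mathrm{int}(\mathcal{S})$; if some $q_l^*=0$, equality in subadditivity only says that $\bm{q}^*$ is a product of its marginals on its support, which does not yield $\mathcal{J}_1\times\mathcal{J}_2=\mathcal{J}$. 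You never establish interiority of the maximizer; in the paper this is precisely the content of Lemma \ref{le5} (the Lalley--Gatzouras Lagrange-multiplier analysis), supplemented by Lemma \ref{le6} (the strict inequality $\dimh K>\max_{\bm{q}\in\mathcal{S}_1\cap\mathcal{S}_2}g(\bm{q})$, proved via the convexity $y''>0$) and Proposition \ref{le8}. Your attempted substitute — that the separable function $RR/RA+SS/SB$ has unique unconstrained maximum $t_1+t_2$ at $R_i=a_i^{t_1}$, $S_j=b_j^{t_2}$, so the normalization $\sum_l q_l^*=1$ forces $\mathcal{J}=\mathcal{J}_1\times\mathcal{J}_2$ — is a non sequitur: $\bm{q}^*$ maximizes $g$ only among product measures supported on $\mathcal{J}$, and nothing identifies its marginals with the unconstrained optimizers; indeed $g(\bm{q}^*)=\dimh K$ is typically strictly smaller than $t_1+t_2$ exactly when $\mathcal{J}\neq\mathcal{J}_1\times\mathcal{J}_2$, so the inference assumes what it is trying to prove. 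To close the argument along your lines you would need to prove that maximizers of $g_1$ (and $g_2$) lie in $\mathrm{int}(\mathcal{S})$ — e.g.\ by the standard boundary-perturbation argument exploiting the infinite derivative of $x\log x$ at $0$, which is what the paper imports from \cite{LG92} — after which the interior equality case of Lemma \ref{le1} finishes the borderline case.
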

	Before proving Proposition \ref{le15}, we present some lemmas.

	\begin{lemma}\label{le1}
		For $\bm{q}=(q_1,\cdots,q_d) \in \emph{int}(\mathcal{S})$, we have
		\begin{equation}\label{e5}
			QQ(\bm{q})\geq RR(\bm{q})+SS(\bm{q})
		\end{equation}
		and equality holds if and only if $\mathcal{J}_1 \times \mathcal{J}_2 =\mathcal{J}$ and $q_l=R_{i_l}(\bm{q})\cdot S_{j_l}(\bm{q})$, for all $l=1,\cdots,d$. 
	\end{lemma}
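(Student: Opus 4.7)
\medskip
\noindent\textbf{Proof proposal.} The inequality $QQ(\bm q)\ge RR(\bm q)+SS(\bm q)$ is really the subadditivity of joint entropy $H(X,Y)\le H(X)+H(Y)$ in disguise, and I would exploit this reformulation. Specifically, I would first set up a joint distribution $p$ on the rectangle $\mathcal{J}_1\times\mathcal{J}_2$ by
$$
p(i,j)=\begin{cases} q_l & \text{if } (i,j)=(i_l,j_l)\in\mathcal{J},\\ 0 & \text{if } (i,j)\in(\mathcal{J}_1\times\mathcal{J}_2)\setminus\mathcal{J}.\end{cases}
$$
Its marginals are $\sum_{j}p(i,j)=R_i(\bm q)$ and $\sum_i p(i,j)=S_j(\bm q)$, so $QQ(\bm q)=\sum_{i,j} p(i,j)\log p(i,j)$, $RR(\bm q)=\sum_i R_i\log R_i$ and $SS(\bm q)=\sum_j S_j\log S_j$. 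Because $\bm q\in\text{int}(\mathcal{S})$, we have $R_i(\bm q)>0$ for every $i\in\mathcal{J}_1$ and $S_j(\bm q)>0$ for every $j\in\mathcal{J}_2$, so the product distribution $u(i,j):=R_i(\bm q)S_j(\bm q)$ is strictly positive on $\mathcal{J}_1\times\mathcal{J}_2$.

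Next I would compute directly that
$$
QQ(\bm q)-RR(\bm q)-SS(\bm q)=\sum_{l=1}^{d} q_l\log\frac{q_l}{R_{i_l}(\bm q)\,S_{j_l}(\bm q)},
$$
which is the relative entropy $D(p\|u)$. The inequality then follows from Jensen's inequality applied to the concave function $\log$ with weights $q_l$:
$$
-\bigl(QQ(\bm q)-RR(\bm q)-SS(\bm q)\bigr)=\sum_{l=1}^d q_l\log\frac{R_{i_l}(\bm q)S_{j_l}(\bm q)}{q_l}\le \log\sum_{l=1}^d R_{i_l}(\bm q)S_{j_l}(\bm q)=\log\!\!\sum_{(i,j)\in\mathcal{J}}\!\!R_i(\bm q)S_j(\bm q),
$$
and the last sum is at most $\bigl(\sum_{i\in\mathcal{J}_1}R_i\bigr)\bigl(\sum_{j\in\mathcal{J}_2}S_j\bigr)=1$ since $\mathcal{J}\subseteq\mathcal{J}_1\times\mathcal{J}_2$, so $\log(\cdot)\le 0$.

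For the equality case I would chase both inequalities at once. Equality in Jensen's forces the ratios $R_{i_l}(\bm q)S_{j_l}(\bm q)/q_l$ to be a common constant $c$, whence $\sum_l q_l\cdot c=c$; equality in the outer bound forces $\sum_{(i,j)\in\mathcal{J}}R_iS_j=1$, i.e.\ $\mathcal{J}=\mathcal{J}_1\times\mathcal{J}_2$ (using $R_i,S_j>0$ on the respective index sets). Combining the two gives $c=1$ and hence $q_l=R_{i_l}(\bm q)S_{j_l}(\bm q)$ for all $l$. Conversely, these two conditions make $D(p\|u)=0$ trivially. I do not anticipate a substantive obstacle: the only subtlety is book-keeping the two separate equality conditions (Jensen's uniform-ratio condition and the saturation $\sum_{\mathcal{J}}R_iS_j=1$) and using the interior hypothesis to rule out vanishing denominators, so that $D(p\|u)$ is well defined and strictly positive whenever either condition fails.
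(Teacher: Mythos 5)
Your proof is correct, and it takes a slightly different route than the paper's, in a way worth noting. The paper extends $\bm q$ by zeros to the full grid $\mathcal{J}_1\times\mathcal{J}_2$ and proves the inequality by a grouping (conditioning) argument: it applies Jensen's inequality to $\phi(x)=x\log x$ within each column, so equality is characterized by the conditional ratios $q_{(i,j)}/R_i(\bm q)$ being independent of $i$, and only afterwards does it invoke $q_l>0$ to deduce $\mathcal{J}_1\times\mathcal{J}_2=\mathcal{J}$ and $q_l=R_{i_l}(\bm q)S_{j_l}(\bm q)$. You instead rewrite the deficit as the relative entropy $\sum_{l}q_l\log\bigl(q_l/(R_{i_l}(\bm q)S_{j_l}(\bm q))\bigr)$ with respect to the product of the marginals and split the estimate into two separate inequalities: Jensen applied to the strictly concave $\log$ with weights $q_l$, and the mass bound $\sum_{(i,j)\in\mathcal{J}}R_i(\bm q)S_j(\bm q)\le 1$. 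Both are standard realizations of the subadditivity of entropy, so the underlying idea is the same; what your decomposition buys is a cleaner equality analysis, since the condition $\mathcal{J}=\mathcal{J}_1\times\mathcal{J}_2$ drops out immediately from saturation of the mass bound (using $R_i,S_j>0$ on the index sets), while the Jensen equality gives the constant ratio $c$ and the two together force $c=1$, i.e.\ $q_l=R_{i_l}(\bm q)S_{j_l}(\bm q)$; you also correctly use the interior hypothesis exactly where it is needed, namely to keep all ratios well defined and to make Jensen's equality case strict. Your argument avoids the zero-extension bookkeeping of the paper's proof and is complete as stated.
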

	\begin{proof}
		The intuition of this lemma is the entropy inequality for a join of two partitions.
		For $(i,j)\in \mathcal{J}_1 \times \mathcal{J}_2$, define
		$$
		q_{(i,j)} =\left \{ \begin{aligned}
			&q_l,\quad & & (i,j)=(i_l,j_l) \text{ for some }l=1,\cdots, d,\\
			&0, & & \text{ otherwise.}
		\end{aligned} \right . 
		$$
		Write $\bm{q}'=\big( q_{(i,j)} \big)_{(i,j)\in \mathcal{J}_1 \times \mathcal{J}_2}$, $R_i(\bm{q}')=\sum_{j\in \mathcal{J}_2} q_{(i,j)}$ and  $S_j(\bm{q}')=\sum_{i\in \mathcal{J}_1} q_{(i,j)}$ for $i\in \mathcal{J}_1,j\in \mathcal{J}_2$.
		Then $R_i(\bm{q}')=R_i(\bm{q})>0$ and $S_j(\bm{q}')=S_j(\bm{q})>0$ for all $i\in\mathcal{J}_1 $, $j\in \mathcal{J}_2$. So, 
		\begin{align*}
			QQ(\bm{q}) &=\sum_{(i,j)\in \mathcal{J}_1\times \mathcal{J}_2 } q_{(i,j)}\log q_{(i,j)} =\sum_{i\in \mathcal{J}_1} \sum_{j\in \mathcal{J}_2} \frac{q_{(i,j)}}{R_i(\bm{q}')}\cdot R_i(\bm{q}') \log \left(\frac{q_{(i,j)}}{R_i(\bm{q}')}\cdot R_i(\bm{q}')\right)
			\\
			&=\sum_{j\in \mathcal{J}_2}  \sum_{i\in \mathcal{J}_1} R_i(\bm{q}') \cdot \frac{q_{(i,j)}}{R_i(\bm{q}')} \log \left(\frac{q_{(i,j)}}{R_i(\bm{q}')}\right)+\sum_{i\in \mathcal{J}_1}  R_i(\bm{q}') \log  R_i(\bm{q}')
			\\
			&\geq \sum_{j\in \mathcal{J}_2} \left(\sum_{i\in \mathcal{J}_1} R_i(\bm{q}') \cdot \frac{q_{(i,j)}}{R_i(\bm{q}')}\right) \log \left(\sum_{i\in \mathcal{J}_1} R_i(\bm{q}') \cdot \frac{q_{(i,j)}}{R_i(\bm{q}')}\right)+\sum_{i\in \mathcal{J}_1}  R_i(\bm{q}') \log  R_i(\bm{q}')
			\\
			&=\sum_{j\in \mathcal{J}_2} S_j(\bm{q}')\log S_j(\bm{q}')+\sum_{i\in \mathcal{J}_1}  R_i(\bm{q}') \log  R_i(\bm{q}')
			\\
			&=RR(\bm{q})+SS(\bm{q}),
		\end{align*}
		where the third line follows from the  function $$\phi(x)=\left\{ \begin{aligned}
			&0,  & & x=0,\\
			&x\log x,\quad & & x>0,
		\end{aligned}\right.$$ is  strict concave. Also, \eqref{e5} becomes an equality if and only if for each $j\in \mathcal{J}_2$, $\frac{q_{(i,j)}}{R_i(\bm{q})}=\frac{q_{(i',j)}}{R_{i'}(\bm{q})}$ hold for all $i,i'\in \mathcal{J}_1$.
		
		Towards the last statement, as
		the ``if'' part is obvious, we only need to deal with the ``only if'' part. Noticing that $q_l>0$, for $l=1,\cdots, d$, we have $\mathcal{J}_1\times \mathcal{J}_2 =\mathcal{J}$ and ${q_{(i,j)}}/{R_i(\bm{q})}$ is independent of $i$. Let $c_j={q_{(i,j)}}/{R_i(\bm{q})}$, then $q_{(i,j)} =c_j \cdot R_i(\bm{q})$. So $S_j(\bm{q})=\sum_{i\in \mathcal{J}_1} q_{(i,j)}=c_j \sum_{i\in \mathcal{J}_1} R_i(\bm{q})=c_j$, which gives $q_{(i,j)}=R_i(\bm{q}) \cdot S_j(\bm{q})$.
	\end{proof}

	\begin{lemma}\label{le2}
		For $\bm{q}\in \emph{int}(\mathcal{S})$, we have
		$$\left \{
		\begin{aligned}
			&g_1(\bm{q})\geq g_2(\bm{q}), \quad  & & \text{if } \bm{q}\in \mathcal{S}_1 \setminus \mathcal{S}_2,
			\\
			&g_1(\bm{q})=g_2(\bm{q}), & &\text{if } \bm{q}\in \mathcal{S}_1 \cap  \mathcal{S}_2,
			\\
			&g_1(\bm{q})\leq g_2(\bm{q}), & &\text{if } \bm{q}\in \mathcal{S}_2 \setminus \mathcal{S}_1.
		\end{aligned}
		\right.
		$$
		When $\mathcal{J}_1 \times \mathcal{J}_2 \neq \mathcal{J}$, the above inequalities strictly hold.
	\end{lemma}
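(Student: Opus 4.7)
The plan is to factor the difference $g_1(\bm{q})-g_2(\bm{q})$ into a product where one factor is controlled by Lemma \ref{le1} and the other by the defining inequality of $\mathcal{S}_1$ versus $\mathcal{S}_2$. Directly from the formulas \eqref{e6}, one regroups
\begin{equation*}
g_1(\bm{q})-g_2(\bm{q})=\frac{RR(\bm{q})-QQ(\bm{q})+SS(\bm{q})}{RA(\bm{q})}+\frac{QQ(\bm{q})-RR(\bm{q})-SS(\bm{q})}{SB(\bm{q})},
\end{equation*}
which after pulling out the common numerator becomes
\begin{equation*}
g_1(\bm{q})-g_2(\bm{q})=\bigl(QQ(\bm{q})-RR(\bm{q})-SS(\bm{q})\bigr)\left(\frac{1}{SB(\bm{q})}-\frac{1}{RA(\bm{q})}\right).
\end{equation*}
This identity is the heart of the argument.

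Next I would analyze the two factors separately. The first factor $QQ(\bm{q})-RR(\bm{q})-SS(\bm{q})$ is nonnegative by Lemma \ref{le1}, and the equality case recorded there shows that it vanishes if and only if $\mathcal{J}_1\times\mathcal{J}_2=\mathcal{J}$ and $q_l=R_{i_l}(\bm{q})\,S_{j_l}(\bm{q})$ for every $l$. In particular, as soon as $\mathcal{J}_1\times\mathcal{J}_2\neq\mathcal{J}$, this factor is strictly positive on all of $\text{int}(\mathcal{S})$.

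For the second factor, note that both $RA(\bm{q})$ and $SB(\bm{q})$ are negative because $a_i,b_j\in(0,1)$. The function $x\mapsto 1/x$ is strictly decreasing on $(-\infty,0)$, so
\begin{equation*}
\frac{1}{SB(\bm{q})}-\frac{1}{RA(\bm{q})}\;\text{has the same sign as}\; RA(\bm{q})-SB(\bm{q}).
\end{equation*}
By the definitions of $\mathcal{S}_1$ and $\mathcal{S}_2$, this quantity is $\geq 0$ on $\mathcal{S}_1$, is $\leq 0$ on $\mathcal{S}_2$, vanishes precisely on $\mathcal{S}_1\cap\mathcal{S}_2$, and is strictly positive on $\mathcal{S}_1\setminus\mathcal{S}_2$, strictly negative on $\mathcal{S}_2\setminus\mathcal{S}_1$.

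Combining the two factors yields all three assertions of the lemma at once. On $\mathcal{S}_1\cap\mathcal{S}_2$ the second factor vanishes, giving $g_1(\bm{q})=g_2(\bm{q})$ unconditionally. On $\mathcal{S}_1\setminus\mathcal{S}_2$ we get $g_1(\bm{q})\geq g_2(\bm{q})$, while on $\mathcal{S}_2\setminus\mathcal{S}_1$ we get $g_1(\bm{q})\leq g_2(\bm{q})$. When $\mathcal{J}_1\times\mathcal{J}_2\neq\mathcal{J}$, the first factor is strictly positive by Lemma \ref{le1}, and since the second factor is strictly nonzero off $\mathcal{S}_1\cap\mathcal{S}_2$, both inequalities in the first and third cases become strict. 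I do not anticipate a serious obstacle; the only subtlety is bookkeeping the sign convention in the second factor (both $RA$ and $SB$ are negative, so one must be careful not to flip an inequality) and verifying that the equality case of Lemma \ref{le1} fails as soon as $\mathcal{J}_1\times\mathcal{J}_2\neq\mathcal{J}$, regardless of the coordinates of $\bm{q}$.
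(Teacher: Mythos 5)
Your proposal is correct and follows essentially the same route as the paper: the identity $g_1(\bm{q})-g_2(\bm{q})=\bigl(QQ(\bm{q})-RR(\bm{q})-SS(\bm{q})\bigr)\bigl(\tfrac{1}{SB(\bm{q})}-\tfrac{1}{RA(\bm{q})}\bigr)$, nonnegativity (and strict positivity when $\mathcal{J}_1\times\mathcal{J}_2\neq\mathcal{J}$) of the first factor via Lemma \ref{le1}, and a sign analysis of the second factor via the definitions of $\mathcal{S}_1,\mathcal{S}_2$. Your extra care in noting that $RA(\bm{q})$ and $SB(\bm{q})$ are negative is a harmless refinement of the same argument.
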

	\begin{proof}
		Note that 
		$$
		\begin{aligned}
			g_1(\bm{q})-g_2(\bm{q})
			&=\big(QQ(\bm{q})-RR(\bm{q})-SS(\bm{q})\big)\left(\frac{1}{SB(\bm{q})}-\frac{1}{RA(\bm{q})}\right).
		\end{aligned}
		$$
		By Lemma \ref{le1}, we have $QQ(\bm{q})-RR(\bm{q})-SS(\bm{q})\geq 0$. For $\bm{q}\in \mathcal{S}_1 \setminus \mathcal{S}_2$, $RA(\bm{q})>SB(\bm{q})$ implies $g_1(\bm{q}) \geq g_2(\bm{q})$, and other cases follow  similarly.
		\vspace{0.2cm}
		
		When $\mathcal{J}_1 \times \mathcal{J}_2 \neq \mathcal{J}$, we have $QQ(\bm{q})-RR(\bm{q})-SS(\bm{q})> 0$, so that $g_1(\bm{q})> g_2(\bm{q})$ if $\bm{q}\in \mathcal{S}_1 \setminus \mathcal{S}_2$ and $g_1(\bm{q})< g_2(\bm{q})$ if $\bm{q}\in \mathcal{S}_2 \setminus \mathcal{S}_1$.
		
	\end{proof}

	\begin{lemma}\label{le5}
		There exist $\bm{q}^{(1)}, \bm{q}^{(2)}\in \mathcal{S}$ such that \begin{equation}\label{e3}
			g_1(\bm{q}^{(1)})=G_1, \quad \quad
			g_2(\bm{q}^{(2)})=G_2.
		\end{equation}
		Furthermore, any $\bm{q}^{(1)},\bm{q}^{(2)}$ satisfying \eqref{e3} must belong to $\emph{int}(\mathcal{S})$, and have the forms:
		$$
		\bm{q}^{(1)}=\left( a_{i_l}^{\theta_1} b_{j_l}^{\lambda_1} \big( \sum_{m\in I_{i_l}} b_{j_m}^{\lambda_1} \big)^{\rho_1-1} \right)_{l=1}^d \quad \text{ and }\quad
		\bm{q}^{(2)}=\left( b_{j_l}^{\theta_2} a_{i_l}^{\lambda_2} \big( \sum_{m\in J_{j_l}} a_{i_m}^{\lambda_2} \big)^{\rho_2-1} \right)_{l=1}^d,
		$$
		where $\theta_1,\lambda_1,\rho_1$ (depending on $\bm{q}^{(1)}$) and $\theta_2, \lambda_2, \rho_2$ (depending on $\bm{q}^{(2)}$) satisfy 
		\begin{equation}\label{e93}
			\begin{aligned}
				\theta_1&=\frac{RR(\bm{q}^{(1)})}{RA(\bm{q}^{(1)})}, \quad & & & 
				\lambda_1&=\frac{QQ(\bm{q}^{(1)})-RR(\bm{q}^{(1)})}{SB(\bm{q}^{(1)})}, \quad & & &
				\rho_1&=\frac{RA(\bm{q}^{(1)})}{SB(\bm{q}^{(1)})},
				\\
				\theta_2&=\frac{SS(\bm{q}^{(2)})}{SB(\bm{q}^{(2)})}, \quad & & &
				\lambda_2&=\frac{QQ(\bm{q}^{(2)})-SS(\bm{q}^{(2)})}{RA(\bm{q}^{(2)})}, \quad & & &
				\rho_2&=\frac{SB(\bm{q}^{(2)})}{RA(\bm{q}^{(2)})}.
			\end{aligned}
		\end{equation}
	\end{lemma}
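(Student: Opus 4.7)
My plan is to prove existence, interiority, and the explicit product form in sequence. I focus on $\bm q^{(1)}$ and $G_1$; the statements for $\bm q^{(2)}$ and $G_2$ follow by symmetry, swapping the roles of $a_i, I_i, R_i$ with $b_j, J_j, S_j$.

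For existence, I observe that $g_1$ is continuous on the compact simplex $\mathcal{S}$: the numerators $RR(\bm q), QQ(\bm q)$ are continuous with the convention $0 \log 0 := 0$, and the denominators $RA(\bm q), SB(\bm q)$ stay strictly negative since each $a_i, b_j \in (0,1)$ and $\sum q_l = 1$. Hence $G_1$ is attained. For interiority, I would suppose $q^{(1)}_l = 0$ for some $l$ and perturb by shifting mass $\epsilon > 0$ from some $q^{(1)}_{l'} > 0$ into $q_l$. A careful expansion shows that the perturbation introduces a singular $\epsilon \log \epsilon$ contribution to $g_1$ (through $QQ/SB$ if $R_{i_l}(\bm q^{(1)}) > 0$, or through $RR/RA$ if $R_{i_l}(\bm q^{(1)}) = 0$, since in the latter case the singular contributions to $QQ$ and $RR$ cancel in $QQ-RR$), with sign that strictly increases $g_1$ while all other contributions are $O(\epsilon)$. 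This contradicts maximality, so $\bm q^{(1)} \in \text{int}(\mathcal{S})$.

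On the interior, the Lagrange condition $\partial g_1/\partial q_l = \mu$ holds for all $l$. Writing $\alpha_l = \log a_{i_l}$, $\beta_l = \log b_{j_l}$, and using the abbreviations $\theta_1, \lambda_1, \rho_1$ from the statement, a direct partial-derivative computation gives
$$\frac{\log R_{i_l} + 1 - \theta_1 \alpha_l}{RA(\bm q^{(1)})} + \frac{\log(q^{(1)}_l/R_{i_l}) - \lambda_1 \beta_l}{SB(\bm q^{(1)})} = \mu.$$
Solving for $\log q^{(1)}_l$, summing over $l \in I_i$ (with $T_i := \sum_{m \in I_i} b_{j_m}^{\lambda_1}$) to extract $R_i$, and substituting back, I plan to arrive at $q^{(1)}_l = K^{\rho_1} a_{i_l}^{\theta_1} b_{j_l}^{\lambda_1} T_{i_l}^{\rho_1 - 1}$ for some undetermined constant $K > 0$.

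The main obstacle is then showing that $K = 1$, which closes the self-referential system (since $\theta_1, \lambda_1, \rho_1$ are themselves defined through $\bm q^{(1)}$). My plan is to substitute the above representation back into the defining identities $RR(\bm q^{(1)}) = \theta_1 \cdot RA(\bm q^{(1)})$ and $QQ(\bm q^{(1)}) - RR(\bm q^{(1)}) = \lambda_1 \cdot SB(\bm q^{(1)})$. Each identity, after simplification, produces a linear equation in the two unknowns $\log K$ and $\sum_i R_i \log T_i$; the $2\times 2$ linear system that results has nondegenerate determinant (using $\rho_1 > 0$), so both unknowns are forced to vanish. This gives $K = 1$ and yields the stated product form. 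The analogous calculation for $\bm q^{(2)}$ produces the symmetric formula.
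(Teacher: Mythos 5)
Your proposal is correct and follows essentially the same route as the paper: the paper invokes the Lagrange-multiplier argument of Lalley--Gatzouras for existence, interiority and the product form up to a multiplicative constant, and then, exactly as in your final step, substitutes that form back into the defining relations \eqref{e93} to force $\sum_l q_l^{(1)}\log\big(\sum_{m\in I_{i_l}}b_{j_m}^{\lambda_1}\big)=0$ and hence the constant to be $1$. The only difference is that you carry out the compactness, boundary-perturbation and Lagrange computations explicitly rather than citing \cite[Proposition 3.4]{LG92}, and those sketches (including the case split on whether $R_{i_l}(\bm{q}^{(1)})=0$ and the triangular two-equation system in the constant and $\sum_l q_l\log T_{i_l}$) check out.
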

	\begin{proof}
		The proof is essentially same as that of \cite[Proposition 3.4]{LG92} by Lalley and Gatzouras via the Lagrange multipliers method, but there is a  difference in that they originally obtained that  $\bm{q}^{(1)}$ and $\bm{q}^{(2)}$ are in the forms:
		$$
		\bm{q}^{(1)}=\left(C_1 a_{i_l}^{\theta_1} b_{j_l}^{\lambda_1} \big( \sum_{m\in I_{i_l}} b_{j_m}^{\lambda_1} \big)^{\rho_1-1} \right)_{l=1}^d \text{ and }
		\bm{q}^{(2)}=\left(C_2 b_{j_l}^{\theta_2} a_{i_l}^{\lambda_2} \big( \sum_{m\in J_{j_l}} a_{i_m}^{\lambda_2} \big)^{\rho_2-1} \right)_{l=1}^d
		$$
		for some $C_1,C_2>0$. So we only need to point out that $C_1=C_2=1$.
		
		Write $\bm{q}^{(1)}= (q_1^{(1)}, \cdots q_{d}^{(1)} )$. Note that 
		$$
		\begin{aligned}
			QQ(\bm{q}^{(1)}) &= \log C_1 +\theta_1 RA(\bm{q}^{(1)})+\lambda_1 SB(\bm{q}^{(1)}) +(\rho_1-1) \sum_{l=1}^d q_l^{(1)} \log ( \sum_{m\in I_{i_l}} b_{j_m}^{\lambda_1} \big),
			\\
			RR(\bm{q}^{(1)}) &= \log C_1 + \theta_1 RA(\bm{q}^{(1)}) + \rho_1 \sum_{l=1}^d q_l^{(1)} \log ( \sum_{m\in I_{i_l}} b_{j_m}^{\lambda_1} \big).
		\end{aligned}
		$$
		By \eqref{e93}, we have 
		$$
		\begin{aligned}
			\lambda_1&=\lambda_1-\frac{1}{SB(\bm{q}^{(1)})} \sum_{l=1}^d q_l^{(1)} \log ( \sum_{m\in I_{i_l}} b_{j_m}^{\lambda_1} \big),
			\\
			\theta_1 & = \frac{\log C_1}{RA(\bm{q}^{(1)})} +\theta_1  +\frac{1}{SB(\bm{q}^{(1)})} \sum_{l=1}^d q_l^{(1)} \log ( \sum_{m\in I_{i_l}} b_{j_m}^{\lambda_1} \big),
		\end{aligned}
		$$
		which implies that $\sum_{l=1}^d q_l^{(1)} \log ( \sum_{m\in I_{i_l}} b_{j_m}^{\lambda_1} \big)=0$, then $C_1=1$. Similarly, $C_2=1$. 
	\end{proof}
	\begin{remark}
		It is worth mentioning that the above maximum points of $g_1,g_2$ may not be unique, see \cite{BF11} for a proof by Barral and Feng in the situation of Lalley and Gatzouras.
	\end{remark}
	
	The following lemma provides an expression of $\dimh K$, slightly different from that of \bara ~(Proposition \ref{prop2}).
	
	\begin{lemma}\label{le7}
		Let $K$ be a \bara~carpet. Then $$
		\dimh K=\max \{G_1, G_2\}.
		$$ 
		Moreover, when $\mathcal{J}_1 \times \mathcal{J}_2 \neq \mathcal{J}$, for any $\bm{q}^{(1)}, \bm{q}^{(2)}$ in $\mathcal{S}$ satisfying $g_1(\bm{q}^{(1)})=G_1$ and $g_2(\bm{q}^{(2)})=G_2$, we have 
		\begin{equation}\label{e64}
			\left\{
			\begin{aligned}
				&\bm{q}^{(1)} \in \mathcal{S}_1, \quad & &\text{if }G_1\geq G_2,
				\\
				&\bm{q}^{(2)} \in \mathcal{S}_2, \quad & &\text{if }G_1\leq G_2.
			\end{aligned}
			\right.
		\end{equation}
	\end{lemma}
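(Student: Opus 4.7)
The idea is to prove the two inequalities $\dimh K \leq \max\{G_1,G_2\}$ and $\dimh K \geq \max\{G_1,G_2\}$ separately, and then to upgrade the argument for the ``moreover'' conclusion. The proof is driven by the definition \eqref{e84} of $g$ together with the two ingredients already available: Lemma \ref{le5}, which guarantees that maximizers of $g_1$ and $g_2$ exist and lie in $\mathrm{int}(\mathcal{S})$, and Lemma \ref{le2}, which compares $g_1$ and $g_2$ pointwise on $\mathcal{S}_1$ and $\mathcal{S}_2\setminus\mathcal{S}_1$.

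For the upper bound, I would observe that at every $\bm{q}\in\mathcal{S}$ formula \eqref{e84} makes $g(\bm{q})$ equal to either $g_1(\bm{q})$ or $g_2(\bm{q})$, so $g(\bm{q})\leq\max\{g_1(\bm{q}),g_2(\bm{q})\}\leq\max\{G_1,G_2\}$; taking the supremum over $\bm{q}$ and invoking Proposition \ref{prop2} gives $\dimh K\leq\max\{G_1,G_2\}$. Here one needs the mild observation that $RA(\bm{q})<0$ and $SB(\bm{q})<0$ throughout $\mathcal{S}$ (each $\log$ term is strictly negative and the components of $\bm{q}$ sum to $1$), so the formulas \eqref{e6} for $g_1$ and $g_2$ remain meaningful even on the boundary of $\mathcal{S}$.

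For the lower bound, I would assume without loss of generality that $G_1\geq G_2$ and use Lemma \ref{le5} to pick $\bm{q}^{(1)}\in\mathrm{int}(\mathcal{S})$ with $g_1(\bm{q}^{(1)})=G_1$. If $\bm{q}^{(1)}\in\mathcal{S}_1$ then $g(\bm{q}^{(1)})=g_1(\bm{q}^{(1)})=G_1$, giving $\dimh K\geq G_1$ via Proposition \ref{prop2}. Otherwise $\bm{q}^{(1)}\in\mathcal{S}_2\setminus\mathcal{S}_1$, and Lemma \ref{le2} (applicable since $\bm{q}^{(1)}$ is interior) yields $g_1(\bm{q}^{(1)})\leq g_2(\bm{q}^{(1)})$, hence $G_1\leq g_2(\bm{q}^{(1)})\leq G_2\leq G_1$; equality throughout then gives $g(\bm{q}^{(1)})=g_2(\bm{q}^{(1)})=G_1$, so again $\dimh K\geq G_1$. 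The case $G_2\geq G_1$ is symmetric using $\bm{q}^{(2)}$.

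The ``moreover'' statement now falls out by inspecting the alternative case above: when $\mathcal{J}_1\times\mathcal{J}_2\neq\mathcal{J}$, Lemma \ref{le2} produces a \emph{strict} inequality $g_2(\bm{q}^{(1)})>g_1(\bm{q}^{(1)})=G_1$ when $\bm{q}^{(1)}\in\mathcal{S}_2\setminus\mathcal{S}_1$, yielding $G_2\geq g_2(\bm{q}^{(1)})>G_1$, which contradicts $G_1\geq G_2$; therefore $\bm{q}^{(1)}\in\mathcal{S}_1$, and symmetrically $\bm{q}^{(2)}\in\mathcal{S}_2$ whenever $G_2\geq G_1$. The only delicate point in the whole argument is the careful use of the strict versus non-strict cases of Lemma \ref{le2}, which is why having $\bm{q}^{(1)},\bm{q}^{(2)}\in\mathrm{int}(\mathcal{S})$ from Lemma \ref{le5} is essential; beyond this interplay I do not anticipate any serious obstacle.
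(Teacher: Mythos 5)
Your proposal is correct and follows essentially the same route as the paper: both directions rest on \eqref{e84} together with Lemma \ref{le5} (interiority of the maximizers) and Lemma \ref{le2} (the pointwise comparison, strict when $\mathcal{J}_1\times\mathcal{J}_2\neq\mathcal{J}$), with the ``moreover'' part obtained by exactly the same contradiction. Your version merely spells out the case analysis that the paper compresses into ``a discussion of $\bm{q}^{(1)}\in\mathcal{S}_1$ or $\mathcal{S}_2\setminus\mathcal{S}_1$,'' so there is nothing to correct.
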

	\begin{proof}
		Without loss of generality, assume that $G_1\geq G_2$.
		Combining \eqref{e84} and  Lemma \ref{le2}, for $\bm{q}\in \text{int}(\mathcal{S})$, we have 
		\begin{equation}\label{e63}
			g(\bm{q})=\left \{ \begin{aligned}
				&g_1(\bm{q})\geq g_2(\bm{q}),\quad & &\text{ if }\bm{q}\in \mathcal{S}_1,
				\\
				&g_2(\bm{q})\geq g_1(\bm{q}), & &\text{ if }\bm{q}\in \mathcal{S}_2 \setminus \mathcal{S}_1.
			\end{aligned}\right.
		\end{equation}
		By Lemma \ref{le5}, for any $\bm{q}^{(1)}\in \mathcal{S}$ with $g_1(\bm{q}^{(1)})=G_1$, $\bm{q}^{(1)}\in \text{int}(\mathcal{S})$. Then from Proposition \ref{prop2}, $$
		\dimh K=\max_{\bm{q}\in \mathcal{S}} g(\bm{q})\geq g(\bm{q}^{(1)}) \geq g_1(\bm{q}^{(1)})=\max \{G_1,G_2\},
		$$ where the second inequality follows from a discussion of $\bm{q}^{(1)} \in \mathcal{S}_1$ or $\mathcal{S}_2\setminus \mathcal{S}_1$. The other direction follows by a similar argument.
		
		When $\mathcal{J}_1 \times \mathcal{J}_2 \neq \mathcal{J}$, by Lemma \ref{le2}, the inequalities in \eqref{e63} strictly hold unless $\bm{q}\in \mathcal{S}_1\cap \mathcal{S}_2$. Still we only look at the case $G_1\geq G_2$. Suppose $\bm{q}^{(1)}\in \mathcal{S}_2\setminus \mathcal{S}_1$, then $G_1=g_1(\bm{q}^{(1)})<g_2(\bm{q}^{(1)})\leq G_2,$ a contradiction. Thus $\bm{q}^{(1)}\in \mathcal{S}_1$.
	\end{proof}

	In the following Proposition \ref{le8}, we will strengthen \eqref{e64} by replacing $\mathcal{S}_1, \mathcal{S}_2$ with $\text{int}(\mathcal{S}_1)$ and $\text{int}(\mathcal{S}_2)$, respectively.
	\begin{lemma}\label{le16}
		Define a function $y:\mathbb{R}\to \mathbb{R}$ by 
		$$
		\sum_{l=1}^d a_{i_l}^x b_{j_l}^{y(x)}=1.
		$$
		If $\mathcal{S}_1\cap \mathcal{S}_2\neq \emptyset$, then the function $y(x)$ satisfies  
		\begin{equation}\label{e8}
			y'(x)=-\frac{\sum_{l=1}^d a_{i_l}^x b_{j_l}^{y(x)} \log a_{i_l}}{\sum_{l=1}^d a_{i_l}^x b_{j_l}^{y(x)} \log b_{j_l}} \quad \text{ and }\quad y''(x)>0.
		\end{equation}
	\end{lemma}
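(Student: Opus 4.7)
\medskip

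\noindent\textbf{Proof proposal for Lemma \ref{le16}.} The formula for $y'(x)$ is a routine application of the implicit function theorem. Setting $F(x,y) = \sum_{l=1}^d a_{i_l}^x b_{j_l}^y$, one has $F(x, y(x)) \equiv 1$, so that $F_x + F_y \cdot y'(x) = 0$. Since $b_{j_l} \in (0,1)$ gives $\log b_{j_l} < 0$ for every $l$, the partial $F_y = \sum_l a_{i_l}^x b_{j_l}^{y(x)} \log b_{j_l}$ is strictly negative, and solving for $y'$ yields exactly the displayed formula.

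For the convexity $y''(x) > 0$, the plan is to differentiate the quotient and simplify. Let $p_l = a_{i_l}^x b_{j_l}^{y(x)}$, $\alpha_l = \log a_{i_l}$, $\beta_l = \log b_{j_l}$, and set $A = \sum_l p_l \alpha_l$, $B = \sum_l p_l \beta_l$, so $y' = -A/B$. A direct computation, using $\frac{d}{dx} p_l = p_l(\alpha_l + \beta_l y') = p_l(\alpha_l - \tfrac{A}{B}\beta_l)$, gives after collecting terms
\begin{equation*}
y''(x) \;=\; -\frac{A'B - AB'}{B^{2}} \;=\; -\frac{1}{B^{3}}\sum_{l=1}^d p_l\bigl(B\alpha_l - A\beta_l\bigr)^{2}.
\end{equation*}
Since $B < 0$, the prefactor $-1/B^{3}$ is positive, so $y''(x) \geq 0$ in general, with equality if and only if $B\alpha_l = A\beta_l$ for every $l$.

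The main obstacle is ruling out this degenerate equality case, and this is precisely where the hypothesis $\mathcal{S}_1 \cap \mathcal{S}_2 \neq \emptyset$ enters. Note first that $A \neq 0$: otherwise $B\alpha_l = 0$ for all $l$ forces $\alpha_l = 0$, i.e.\ $a_{i_l} = 1$, which is impossible. Hence $B\alpha_l = A\beta_l$ for all $l$ is equivalent to $\alpha_l = c\,\beta_l$ with $c = A/B$, i.e.\ $a_{i_l} = b_{j_l}^{c}$ for every $l$. Taking any $\bm{q} \in \mathcal{S}_1 \cap \mathcal{S}_2$, the defining equality $RA(\bm{q}) = SB(\bm{q})$ then reads $(c-1)\sum_l q_l \beta_l = 0$. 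Since $\sum_l q_l \beta_l < 0$ (as $\beta_l < 0$ and $\bm{q} \in \mathcal{S}$), this forces $c = 1$, i.e.\ $a_{i_l} = b_{j_l}$ for every $l$, contradicting the standing Assumption~\eqref{e38}. Therefore the sum $\sum_l p_l (B\alpha_l - A\beta_l)^{2}$ is strictly positive, and $y''(x) > 0$.
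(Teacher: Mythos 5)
Your proposal is correct and follows essentially the same route as the paper: implicit differentiation gives $y'$, a second differentiation exhibits $y''$ as a sum of squares divided by a negative quantity (your quotient-rule identity is the paper's twice-differentiated identity up to a factor of $B^2$), and the degenerate case $\log a_{i_l}$ proportional to $\log b_{j_l}$ is excluded using $\mathcal{S}_1\cap\mathcal{S}_2\neq\emptyset$ together with Assumption \eqref{e38}. The only cosmetic difference is that you plug a point of $\mathcal{S}_1\cap\mathcal{S}_2$ into $RA=SB$ to force $c=1$, whereas the paper argues that proportionality with ratio $\neq 1$ would make $\mathcal{S}_1\cap\mathcal{S}_2$ empty; these are logically the same step.
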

	\begin{proof}
		By theorem of implicit function, we have
		\begin{equation}\label{e7}
			\sum_{l=1}^d  a_{i_l}^x b_{j_l}^{y(x)}(\log a_{i_l} +y'(x) \log b_{j_l})=0,
		\end{equation}
		which implies that the first part of \eqref{e8}.
		Differentiating \eqref{e7} implicitly with respect to $x$ gives 
		$$
		\sum_{l=1}^d  a_{i_l}^x b_{j_l}^{y(x)} \big( (\log a_{i_l} +y'(x) \log b_{j_l})^2 + y''(x) \log b_{j_l}\big)=0,
		$$
		which implies that $y''(x)> 0$ unless $\log a_{i_l}/\log b_{j_l}$ are the same for all $l=1,\cdots, d$. 
		
		Suppose that all $\log a_{i_l}/\log b_{j_l}$ are the same, set $t=\log a_{i_l} /\log b_{j_l}$. By Assumption \eqref{e38}, $t\neq 1$. Then, 
		$$
		\mathcal{S}_1=\left \{\bm{q}\in \mathcal{S}:\sum_{l=1}^d q_l \log a_{i_l}\geq \sum_{l=1}^d q_l \log b_{j_l} \right \}=\left \{ \begin{aligned}
			&\emptyset, & & \text{ if }t>1,\\
			&\mathcal{S},& & \text{ if }t<1,
		\end{aligned}\right.
		$$
		so that $\mathcal{S}_1\cap \mathcal{S}_2 =\emptyset$, a contraction. Thus, $y''(x)>0$.
		\vspace{0.2cm}
	\end{proof}
	
	\begin{lemma}\label{le6}
		For a \bara~carpet $K$, if $\mathcal{S}_1 \cap \mathcal{S}_2 \neq \emptyset$ and $\mathcal{J}_1\times \mathcal{J}_2 \neq \mathcal{J}$, then $\dimh K >\max_{\bm{q}\in  \mathcal{S}_1\cap \mathcal{S}_2} g(\bm{q})$.
	\end{lemma}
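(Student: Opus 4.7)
My strategy is to argue by contradiction. Suppose the conclusion fails, so there is some $\bm{q}^*\in \mathcal{S}_1\cap \mathcal{S}_2$ with $g(\bm{q}^*)=\dimh K$. Because $\bm{q}^*\in \mathcal{S}_1\cap\mathcal{S}_2$ means $RA(\bm{q}^*)=SB(\bm{q}^*)$, the definitions in \eqref{e6} give $g_1(\bm{q}^*)=g_2(\bm{q}^*)=g(\bm{q}^*)$. By Lemma \ref{le7} we have $\dimh K=\max\{G_1,G_2\}$, hence $g_1(\bm{q}^*)\ge G_1$ and $g_2(\bm{q}^*)\ge G_2$, which together with $G_i=\max g_i$ forces $g_1(\bm{q}^*)=G_1$ and $g_2(\bm{q}^*)=G_2$. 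So $\bm{q}^*$ is a simultaneous maximizer of both $g_1$ and $g_2$ over $\mathcal{S}$.

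Next, I would apply Lemma \ref{le5} twice to obtain two explicit representations of the same point:
$$
q^*_l \;=\; a_{i_l}^{\theta_1}b_{j_l}^{\lambda_1}\Big(\sum_{m\in I_{i_l}} b_{j_m}^{\lambda_1}\Big)^{\rho_1-1}
\;=\; b_{j_l}^{\theta_2}a_{i_l}^{\lambda_2}\Big(\sum_{m\in J_{j_l}} a_{i_m}^{\lambda_2}\Big)^{\rho_2-1}
$$
for every $l$, with parameters determined by \eqref{e93}. Since $\bm{q}^*\in\mathcal{S}_1\cap\mathcal{S}_2$, the Lagrangian ratios $\rho_1=RA(\bm{q}^*)/SB(\bm{q}^*)$ and $\rho_2=SB(\bm{q}^*)/RA(\bm{q}^*)$ both equal $1$, so the two expressions collapse to the pointwise identity $a_{i_l}^{\theta_1}b_{j_l}^{\lambda_1}=a_{i_l}^{\lambda_2}b_{j_l}^{\theta_2}$, equivalently $(\theta_1-\lambda_2)\log a_{i_l}=(\theta_2-\lambda_1)\log b_{j_l}$ for all $l$. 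The hypothesis $\mathcal{S}_1\cap\mathcal{S}_2\ne\emptyset$ rules out (via the argument inside the proof of Lemma \ref{le16}) that all ratios $\log a_{i_l}/\log b_{j_l}$ coincide; a one-line elementary check then forces $\theta_1=\lambda_2$ and $\theta_2=\lambda_1$.

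Substituting $\theta_1=\lambda_2$ into \eqref{e93} gives $RR(\bm{q}^*)/RA(\bm{q}^*)=(QQ(\bm{q}^*)-SS(\bm{q}^*))/RA(\bm{q}^*)$, i.e., $QQ(\bm{q}^*)=RR(\bm{q}^*)+SS(\bm{q}^*)$. But $\bm{q}^*\in\textup{int}(\mathcal{S})$ by Lemma \ref{le5}, and combined with the standing hypothesis $\mathcal{J}_1\times \mathcal{J}_2\ne \mathcal{J}$, Lemma \ref{le1} yields the \emph{strict} inequality $QQ(\bm{q}^*)>RR(\bm{q}^*)+SS(\bm{q}^*)$, a contradiction. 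The main obstacle I anticipate is the deduction $\theta_1=\lambda_2$: it is tempting simply to cancel exponents, but one must carefully use that $\mathcal{S}_1\cap\mathcal{S}_2\ne\emptyset$ prevents a uniform logarithmic alignment between the contractions $a_{i_l}$ and $b_{j_l}$ — exactly the rigidity that the convexity statement $y''>0$ in Lemma \ref{le16} was designed to encode.
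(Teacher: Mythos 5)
Your proposal is correct, but it takes a genuinely different route from the paper. The paper works directly with the constrained problem on $\mathcal{S}_1\cap\mathcal{S}_2$: it writes the constrained maximizer as $\bm{q}=\big(a_{i_l}^{x_0}b_{j_l}^{y(x_0)}\big)_{l=1}^d$ with $y'(x_0)=-1$, then perturbs along the curve $x\mapsto\big(a_{i_l}^{x}b_{j_l}^{y(x)}\big)_{l=1}^d$ for $\epsilon\to 0^{\pm}$, using the strict convexity $y''>0$ from Lemma \ref{le16} to show $x+y(x)$ increases off the constraint; comparing $g(\bm{q}^{(\epsilon)})$ with the assumed maximum forces the two log-sums $\sum_i R_i(\bm{q})\log\sum_{l\in I_i}b_{j_l}^{y(x_0)}$ and $\sum_j S_j(\bm{q})\log\sum_{l\in J_j}a_{i_l}^{x_0}$ to be nonnegative, hence $QQ(\bm{q})-RR(\bm{q})-SS(\bm{q})\le 0$, contradicting Lemma \ref{le1}. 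You instead observe that the hypothetical $\bm{q}^*\in\mathcal{S}_1\cap\mathcal{S}_2$ with $g(\bm{q}^*)=\dimh K$ is automatically a global maximizer of both $g_1$ and $g_2$ (via Lemma \ref{le7} and $RA(\bm{q}^*)=SB(\bm{q}^*)$), apply the normalized Lagrange-form characterization of Lemma \ref{le5} twice at the same point (the $\rho_i=1$ collapse is correct), and use the non-proportionality of $(\log a_{i_l})_l$ and $(\log b_{j_l})_l$ — which, as you note, follows from $\mathcal{S}_1\cap\mathcal{S}_2\neq\emptyset$ by the argument inside Lemma \ref{le16} together with Assumption \eqref{e38} — to force $\theta_1=\lambda_2$, hence the exact equality $QQ(\bm{q}^*)=RR(\bm{q}^*)+SS(\bm{q}^*)$, again contradicting Lemma \ref{le1} since $\bm{q}^*\in\mathrm{int}(\mathcal{S})$ and $\mathcal{J}_1\times\mathcal{J}_2\neq\mathcal{J}$. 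Both arguments bottom out in the same entropy inequality, but yours avoids the $\epsilon\to 0^{\pm}$ limiting analysis and the convexity computation at the price of leaning on the full strength of Lemma \ref{le5} (that \emph{every} maximizer of $g_1$, resp.\ $g_2$, has the stated normalized form with $C_1=C_2=1$), whereas the paper's perturbation argument only needs the constrained Lagrange form and the convexity of $y$; your version is shorter and arguably more transparent, the paper's is more self-contained at this point of the development. The only cosmetic gaps are that you should state explicitly that the negation of the lemma yields such a $\bm{q}^*$ (compactness of $\mathcal{S}_1\cap\mathcal{S}_2$ and continuity of $g$ there) and record the one-line rigidity check ($\theta_1\neq\lambda_2$ would make all ratios $\log a_{i_l}/\log b_{j_l}$ equal); neither affects correctness.
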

	\begin{proof}
		Note that for $\bm{q}\in \mathcal{S}_1 \cap \mathcal{S}_2$,
		\begin{equation}\label{e2}
			g(\bm{q})=g_1(\bm{q})=g_2(\bm{q})=\frac{\sum_{l=1}^{d} q_l\log q_l}{\sum_{l=1}^d q_l \log b_{j_l}} =\frac{\sum_{l=1}^{d} q_l\log q_l}{\sum_{l=1}^d q_l \log a_{i_l}}.
		\end{equation}
		We will prove the lemma by contradiction, suppose that $\dimh K=\max_{\bm{q}\in  \mathcal{S}_1\cap \mathcal{S}_2} g(\bm{q})$. A similar argument in \cite[Proposition 3.4]{LG92} implies that any maximum point of \eqref{e2} belongs to \text{int}($\mathcal{S}$). By the Lagrange multipliers method, regarding \eqref{e2} as the objective function and $\bm{q} \longmapsto RA(\bm{q})-SB(\bm{q})$ as the constraint function, we have any maximum point $\bm{q}$ of \eqref{e2} has the form:
		$$
		\bm{q}= \left(C a_{i_l}^{x_0} b_{j_l}^{y_0}\right)_{l=1}^d,
		$$
		where $x_0+y_0=g(\bm{q})$, the constant $C$ ensures $\bm{q}$ is a probability vector, and $\bm{q}\in \mathcal{S}_1 \cap \mathcal{S}_2$. Substituting $\bm{q}$ into \eqref{e2}, we further see that $C=1$. Let $y=y(x)$ be the same function in Lemma \ref{le16}, then $y(x_0)=y_0$ and $y'(x_0)=-1$. Since $\mathcal{S}_1\cap \mathcal{S}_2 \neq \emptyset$, by Lemma \ref{le16}, $y'(x)$ is strictly increasing, and so $x_0$ is the unique number satisfying $y'(x_0)=-1$.

		Fix the above $\bm{q}=\left( a_{i_l}^{x_0} b_{j_l}^{y(x_0)} \right)_{l=1}^d$. 
		For any $\epsilon>0$,  $y''(x_0)>0$ yields  $y'(x_0+\epsilon)>-1$, so $x_0+\epsilon+y(x_0+\epsilon)>x_0+y(x_0)$. Take 
		$$
		\bm{q}^{(\epsilon)}=\left( a_{i_l}^{x_0+\epsilon} b_{j_l}^{y(x_0+\epsilon)} \right)_{l=1}^d,
		$$   
		then from $y'(x_0+\epsilon)=-\frac{RA(\bm{q}^{(\epsilon)})}{SB({\bm{q}^{(\epsilon)}})}>-1$, $\bm{q}^{(\epsilon)}  \in \text{int}(\mathcal{S}_1)$.
		Therefore 
		\begin{align*}
			g(\bm{q}^{(\epsilon)})&=g_1(\bm{q}^{(\epsilon)})=\frac{QQ(\bm{q}^{(\epsilon)})}{SB(\bm{q}^{(\epsilon)})}+RR(\bm{q}^{(\epsilon)})\left(\frac{1}{RA(\bm{q}^{(\epsilon)})}-\frac{1}{SB(\bm{q}^{(\epsilon)})}\right)
			\\
			&=\frac{(x_0+\epsilon)RA(\bm{q}^{(\epsilon)})+y(x_0+\epsilon)SB(\bm{q}^{(\epsilon)})}{SB(\bm{q}^{(\epsilon)})}
			\\
			&\quad \quad +\left((x_0+\epsilon)RA(\bm{q}^{(\epsilon)})+\sum_{i=1}^r R_i(\bm{q}^{(\epsilon)})\log \sum_{l\in {I}_i} b_{j_l}^{y(x_0+\epsilon)}\right)\left(\frac{1}{RA(\bm{q}^{(\epsilon)})}-\frac{1}{SB(\bm{q}^{(\epsilon)})}\right)
			\\
			&=x_0+\epsilon+y(x_0+\epsilon)+\left(\sum_{i=1}^r R_i(\bm{q}^{(\epsilon)})\log \sum_{l\in {I}_i} b_{j_l}^{y(x_0+\epsilon)}\right)\left(\frac{1}{RA(\bm{q}^{(\epsilon)})}-\frac{1}{SB(\bm{q}^{(\epsilon)})}\right)
			\\
			&\leq x_0+y(x_0),
		\end{align*}
		where the last inequality follows from $g(\bm{q}^{(\epsilon)})\leq \max_{\bm{q}'\in  \mathcal{S}_1\cap \mathcal{S}_2} g(\bm{q}')=x_0+y(x_0)$. Combining this with $x_0+\epsilon +y(x_0+\epsilon)>x_0+y(x_0)$ and ${RA(\bm{q}^{(\epsilon)})} >{SB(\bm{q}^{(\epsilon)})}
		$  \big(since $\bm{q}^{(\epsilon)}\in \text{int}(\mathcal{S}_1)$\big), we have 
		$$
		\sum_{i=1}^r R_i(\bm{q}^{(\epsilon)})\log \sum_{l\in {I}_i} b_{j_l}^{y(x_0+\epsilon)}>0.
		$$
		Letting $\epsilon\to 0^+$, we have 
		$$
		\sum_{i=1}^r R_i(\bm{q})\log \sum_{l\in I_i} b_{j_l}^{y(x_0)}\geq 0.
		$$

		On the other hand, for $\epsilon<0$, we have  $y'(x_0+\epsilon)<-1$ and $x_0+\epsilon+y(x_0+\epsilon)>x_0+y(x_0)$. Using a similar argument as above, we have 
		$$
		\sum_{j=1}^s S_j(\bm{q}) \log \sum_{l\in J_j} a_{i_l}^{x_0}\geq 0.
		$$
		Therefore, 
		$$
		\begin{aligned}
			QQ(\bm{q})-RR(\bm{q})-SS(\bm{q})&=\sum_{l=1}^d a_{i_l}^{x_0} b_{j_l}^{y(x_0)} \log \frac{1}{(\sum_{m\in J_{j_l}} a_{i_m}^{x_0})(\sum_{m\in I_{i_l}} b_{j_m}^{y(x_0)})}
			\\
			&=-\sum_{j=1}^s S_j(\bm{q}) \log \sum_{l\in J_j} a_{i_l}^{x_0}-\sum_{i=1}^r R_i(\bm{q})\log \sum_{l\in I_i} b_{j_l}^{y(x_0)}\leq 0,
		\end{aligned}
		$$
		which contradicts with Lemma \ref{le1} since $\mathcal{J}_1\times \mathcal{J}_2 \neq \mathcal{J}$. So $\dimh K>\max_{\bm{q}\in  \mathcal{S}_1\cap \mathcal{S}_2} g(\bm{q})$.
	\end{proof}
	
	\begin{proposition}\label{le8}
		When $\mathcal{J}_1\times \mathcal{J}_2 \neq \mathcal{J}$, we have 
		\begin{enumerate}[(a)]
			\item \label{e28} if $G_1\geq G_2$, then for any $\bm{q}^{(1)}$ satisfying $g_1(\bm{q}^{(1)})=G_1$, $\bm{q}^{(1)}\in \emph{int}(\mathcal{S}_1)$,
			\item \label{e29} if $G_2\geq G_1$, then for any $\bm{q}^{(2)}$ satisfying $g_2(\bm{q}^{(2)})=G_2$, $\bm{q}^{(2)} \in \emph{int}(\mathcal{S}_2)$.
		\end{enumerate} 
	\end{proposition}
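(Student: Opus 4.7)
The plan is to run a short contradiction argument that combines the three ingredients already assembled: (i) Lemma \ref{le5}, which places any maximizer $\bm{q}^{(1)}$ of $g_1$ in $\text{int}(\mathcal{S})$; (ii) Lemma \ref{le7}, which under $\mathcal{J}_1\times\mathcal{J}_2\neq \mathcal{J}$ already forces $\bm{q}^{(1)}\in\mathcal{S}_1$ whenever $G_1\geq G_2$; and (iii) Lemma \ref{le6}, which gives a strict gap $\dimh K>\max_{\bm{q}\in\mathcal{S}_1\cap \mathcal{S}_2}g(\bm{q})$ when $\mathcal{S}_1\cap \mathcal{S}_2\neq\emptyset$ and $\mathcal{J}_1\times\mathcal{J}_2\neq\mathcal{J}$. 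The only thing left to rule out is that $\bm{q}^{(1)}$ sits on the common boundary $\{\bm{q}\in \text{int}(\mathcal{S}):RA(\bm{q})=SB(\bm{q})\}=\mathcal{S}_1\cap \mathcal{S}_2 \cap \text{int}(\mathcal{S})$.

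I would prove \eqref{e28} and then note that \eqref{e29} follows by the symmetric argument (swapping the roles of $a_i$ with $b_j$, of rows with columns, and of $g_1$ with $g_2$). For \eqref{e28}, assume $G_1\geq G_2$ and suppose for contradiction that some maximizer $\bm{q}^{(1)}$ of $g_1$ fails to lie in $\text{int}(\mathcal{S}_1)$. By Lemma \ref{le5} we have $\bm{q}^{(1)}\in \text{int}(\mathcal{S})$, so ``not in $\text{int}(\mathcal{S}_1)$'' can only mean that $\bm{q}^{(1)}$ lies on the relative boundary of $\mathcal{S}_1$ inside $\text{int}(\mathcal{S})$, i.e.\ $RA(\bm{q}^{(1)})=SB(\bm{q}^{(1)})$, so $\bm{q}^{(1)}\in \mathcal{S}_1\cap \mathcal{S}_2$. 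In particular $\mathcal{S}_1\cap \mathcal{S}_2\neq\emptyset$, which is exactly the hypothesis required to invoke Lemma \ref{le6}.

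At this point the contradiction is immediate: since $\bm{q}^{(1)}\in \mathcal{S}_1\cap \mathcal{S}_2$, Lemma \ref{le2} (or directly \eqref{e84}) gives $g(\bm{q}^{(1)})=g_1(\bm{q}^{(1)})=G_1$. Combined with $G_1\geq G_2$ and Lemma \ref{le7}, we get
\[
\dimh K=\max\{G_1,G_2\}=G_1=g(\bm{q}^{(1)})\leq \max_{\bm{q}\in \mathcal{S}_1\cap \mathcal{S}_2} g(\bm{q}),
\]
which directly contradicts the strict inequality supplied by Lemma \ref{le6}. Hence $\bm{q}^{(1)}\in \text{int}(\mathcal{S}_1)$.

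I do not anticipate a real obstacle here, since all the analytic work has already been done in the lemmas that precede this proposition. The only subtlety worth flagging is a precise reading of ``$\text{int}(\mathcal{S}_1)$'': one must interpret it as ``inside $\text{int}(\mathcal{S})$ with strict inequality $RA>SB$,'' so that the complement of $\text{int}(\mathcal{S}_1)$ inside $\text{int}(\mathcal{S})\cap \mathcal{S}_1$ is exactly $\text{int}(\mathcal{S})\cap \mathcal{S}_1\cap \mathcal{S}_2$. Once this identification is made, the argument is a single-step application of Lemma \ref{le6}, and the symmetry between $g_1$ and $g_2$ takes care of \eqref{e29} without any additional ideas.
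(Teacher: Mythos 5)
Your argument is correct and follows essentially the same route as the paper: Lemma \ref{le5} puts the maximizer in $\text{int}(\mathcal{S})$, the second part of Lemma \ref{le7} (your ingredient (ii)) puts it in $\mathcal{S}_1$, and Lemma \ref{le6} excludes the boundary $\mathcal{S}_1\cap\mathcal{S}_2$. The only cosmetic difference is that the paper disposes of the case $\mathcal{S}_1\cap\mathcal{S}_2=\emptyset$ separately (via $\mathcal{S}_1=\mathcal{S}$ or $\mathcal{S}_2=\mathcal{S}$ and Lemma \ref{le5}), whereas your contradiction argument absorbs it, since landing on the boundary already forces $\mathcal{S}_1\cap\mathcal{S}_2\neq\emptyset$ before Lemma \ref{le6} is invoked.
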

	\begin{proof}
		If $\mathcal{S}_1\cap \mathcal{S}_2 =\emptyset$, then either $\mathcal{S}_1=\mathcal{S}$ or $\mathcal{S}_2=\mathcal{S}$, since otherwise by picking $\bm{q}{'} \in \mathcal{S}_1$, $\bm{q}{''} \in \mathcal{S}_2$, an easy computation can yield that  $\alpha \bm{q}{'} +(1-\alpha) \bm{q}{''} \in \mathcal{S}_1 \cap \mathcal{S}_2$ for some $\alpha\in (0,1)$. The proposition follows by Lemma \ref{le5} immediately. For $\mathcal{S}_1\cap \mathcal{S}_2 \neq \emptyset$, we only prove the case \eqref{e28}, since \eqref{e29} is similar. For $\bm{q}^{(1)}$ satisfying $g_1(\bm{q}^{(1)})=G_1$, by Lemmas \ref{le5} and \ref{le7}, $\bm{q}^{(1)}\in \mathcal{S}_1\cap \text{int}(\mathcal{S})$. Combining this with Lemma \ref{le6}, we see that $\bm{q}^{(1)} \in \text{int}(\mathcal{S}_1)$.
	\end{proof}

	\begin{proof}[Proof of Proposition \ref{le15}]
		When $\mathcal{J}_1\times \mathcal{J}_2 \neq \mathcal{J}$,
		by Lemmas \ref{le2}, \ref{le7} and Proposition \ref{le8}, we have 
		\begin{equation}\label{e82}
			\begin{aligned}
				G_1>G_2 &\iff \dimh K=G_1\text{ and } \{\bm{q}\in \mathcal{S}:g(\bm{q})=\dimh K\}\subseteq  \text{int}(\mathcal{S}_1),
				\\
				G_1=G_2 &\iff \dimh K=G_1=G_2\text{ and }\left\{
				\begin{aligned}
					\{\bm{q}\in \mathcal{S}:g(\bm{q})=\dimh K\}\cap  \text{int}(\mathcal{S}_1)\neq \emptyset, 
					\\
					\{\bm{q}\in \mathcal{S}:g(\bm{q})=\dimh K\}\cap  \text{int}(\mathcal{S}_2)\neq \emptyset.
				\end{aligned} \right.
			\end{aligned}
		\end{equation}
		
		``$\ref{H} \Longrightarrow \ref{H'}$''. Without loss of generality, we assume that there is a $\bm{q}\in \mathcal{S}_1$ such that $g(\bm{q})=\dimh K$ and $K$ has uniform vertical fibres. If $G_1>G_2$, then $\ref{H'}$ follows; if $G_1<G_2$, this yields a contradiction since $\dimh K=g(\bm{q})=g_1(\bm{q}) \leq G_1< \max\{G_1,G_2\}=\dimh K$; if $G_1=G_2$, by \eqref{e82}, we know that there is a $\bm{q}'\in \mathcal{S}_2$ such that $g(\bm{q}')=\dimh K$, then  $K$ has both uniform vertical and horizontal fibres. So \ref{H'} holds.
		
		``$\ref{H'} \Longrightarrow \ref{H}$''. When $G_1=G_2$, \ref{H} holds obviously since  $K$ has both uniform vertical and horizontal fibres. So without loss of generality, we then only consider the case $G_1>G_2$.
		By \ref{H'}, $K$ has uniform vertical fibres, and  by \eqref{e82}, $\{\bm{q}\in \mathcal{S}_2:g(\bm{q})=\dimh K\}=\emptyset$. This still gives \ref{H}.  
		
		\vspace{0.2cm}
		
		When $\mathcal{J}_1\times \mathcal{J}_2 =\mathcal{J}$, there is nothing to prove,  since $K$ always has both uniform vertical and horizontal fibres.
	\end{proof}

	\subsection{Approximate squares}\label{sec3.2}
	Let $K$ be a \bara~carpet generated by an IFS $\{\psi_1,\cdots,\psi_d\}$. Write $\Sigma^{\mathbb{N}}=\{1,\cdots, d\}^{\mathbb{N}}$ to denote the set of all \textit{infinite words}. Write ${\Sigma}^*=\bigcup_{n\geq 1} \{1,\cdots,d\}^n$ to denote the set of all \textit{finite words}. For $\varepsilon=\varepsilon_1\cdots \varepsilon_n\in {\Sigma}^*$, write $|\varepsilon|=n$ to denote the \textit{length} of $\varepsilon$. For $\varepsilon\in {\Sigma}^\mathbb{N}$, let $|\varepsilon|=\infty$. For $\varepsilon \in {\Sigma}^* \cup {\Sigma}^\mathbb{N}$ and $m\leq |\varepsilon|$, 
	write $\varepsilon|_m=\varepsilon_1\cdots\varepsilon_m$. For $\varepsilon,\varepsilon'\in {\Sigma}^*\cup {\Sigma}^\mathbb{N}$, write $\varepsilon \prec \varepsilon'$ if $\varepsilon'|_m=\varepsilon$ for some $m\leq |\varepsilon'|$. For $\varepsilon=\varepsilon_1\cdots\varepsilon_n$, $ \varepsilon'=\varepsilon'_1\cdots\varepsilon'_m\in {\Sigma}^*$, let $\varepsilon\varepsilon'=\varepsilon_1\cdots\varepsilon_n\varepsilon'_1\cdots\varepsilon'_m$ be the \textit{concatenation} of $\varepsilon$ and $\varepsilon'$.
	For $\varepsilon=\varepsilon_1\cdots\varepsilon_n\in {\Sigma}^*$, denote  $
	[\varepsilon]=\{\varepsilon'\in {\Sigma}^\mathbb{N}:\varepsilon \prec \varepsilon'\}
	$ the \textit{cylinder set} of $\varepsilon$.
	Let $\Pi:\Sigma^\mathbb{N}\to K$ be the canonical \textit{projection} defined by
	$$
	\big\{\Pi(\varepsilon) \big\}=\bigcap_{n=1}^{\infty} \psi_{\varepsilon_1} \circ \cdots \circ \psi_{\varepsilon_n}([0,1]^2),
	$$
	for $\varepsilon=\varepsilon_1\varepsilon_2\cdots\in {\Sigma}^\mathbb{N}$. So $K=\Pi({\Sigma}^\mathbb{N})$. For $\bm{q}=(q_1,\cdots, q_d)\in \mathcal{S}$, let $\nu_{\bm{q}}$ be the Bernoulli measure on ${\Sigma}^\mathbb{N}$ associated with $\bm{q}$, and let $\mu_{\bm{q}}=\nu_{\bm{q}}\circ \Pi^{-1}$ be the self-affine measure on $K$ associated with $\bm{q}$. Clearly,
	$$
	\mu_{\bm{q}}=\sum_{l=1}^d q_l\cdot \mu_{\bm{q}}\circ\psi_l^{-1}.
	$$

	For $\varepsilon=\varepsilon_1\cdots\varepsilon_n\in {\Sigma}^*$, write $\psi_{\varepsilon}=\psi_{\varepsilon_1}\circ \cdots \circ \psi_{\varepsilon_n}$ and  
	$$
	A_{\varepsilon}=a_{i_{\varepsilon_1}} \cdots a_{i_{\varepsilon_n}}, \quad
	B_{\varepsilon}=b_{j_{\varepsilon_1}} \cdots b_{j_{\varepsilon_n}}.
	$$
	Note that $A_{\varepsilon}$, $B_{\varepsilon}$ are the width and height of the rectangle $\psi_{\varepsilon}([0,1]^2)$, respectively. For $\varepsilon=\varepsilon_1\varepsilon_2\cdots\in {\Sigma}^\mathbb{N}$ and $\delta>0$, let 
	$$
	\begin{aligned}
		k_1(\varepsilon,\delta)&=\max \{ k\in \mathbb{N}:\delta\leq A_{\varepsilon|_k}\}, 
		\\
		k_2(\varepsilon,\delta)&=\max \{ k\in \mathbb{N}:\delta\leq B_{\varepsilon|_k}\},
	\end{aligned}
	$$
	and 
	$$
	k_{\max}(\varepsilon,\delta)=\max \{k_1(\varepsilon,\delta), k_2(\varepsilon,\delta)\}, \quad 
	k_{\min}(\varepsilon,\delta)=\min \{k_1(\varepsilon,\delta), k_2(\varepsilon,\delta)\}.
	$$
	Define the \textit{approximate square} $Q(\varepsilon,\delta)$ that contains $\Pi(\varepsilon)$, with ``radius'' $\delta$ in the following way: if $k_1(\varepsilon,\delta)\geq k_2(\varepsilon,\delta)$, define 
	$$
	Q(\varepsilon,\delta):= \psi_{\varepsilon|_{k_2(\varepsilon,\delta)}}([0,1]^2) \cap  \pi_1^{-1}\circ \pi_1 \left(   \psi_{\varepsilon|_{k_1(\varepsilon,\delta)}}([0,1]^2)\right),
	$$
	if $k_2(\varepsilon,\delta)>k_1(\varepsilon,\delta)$, define
	$$
	Q(\varepsilon,\delta):= \psi_{\varepsilon|_{k_1(\varepsilon,\delta)}}([0,1]^2) \cap  \pi_2^{-1}\circ \pi_2 \left(   \psi_{\varepsilon|_{k_2(\varepsilon,\delta)}}([0,1]^2)\right).
	$$
	The following property of approximate squares follows from \cite[Lemma 7.1]{F14}.
	\begin{lemma}(\cite[Lemma 7.1]{F14})\label{le11}
		There is $C>1$ such that for any $\delta>0$, we have 
		\begin{enumerate}[(a)]
			\item for any $\varepsilon\in {\Sigma}^\mathbb{N}$, the approximate square $Q(\varepsilon,\delta)$ is a rectangle with both width and height belong to $[\delta,C\delta]$, so $
			Q(\varepsilon,\delta)\subseteq B(\Pi(\varepsilon),\sqrt{2}C\delta),
			$
			\item for any $x\in K=\Pi({\Sigma}^{\mathbb{N}})$, the ball $B(x,\delta)$ can be covered by at most $C$ approximate squares of ``radius'' $\delta$.
		\end{enumerate}
	\end{lemma}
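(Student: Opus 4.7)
The plan is to prove both assertions directly from the definition of approximate squares, exploiting only that each contraction ratio $a_i,b_j$ lies in a fixed interval bounded away from $0$ and $1$. Set $a_{\min}=\min_{1\leq i\leq r} a_i$ and $b_{\min}=\min_{1\leq j\leq s}b_j$, and take $C_0=\max\{1/a_{\min},\,1/b_{\min}\}$; the constant $C$ of the lemma will be chosen as the maximum of $C_0$ and a covering constant produced in part (b).

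For part (a), fix $\varepsilon\in \Sigma^{\mathbb{N}}$ and $\delta>0$, and assume $k_1:=k_1(\varepsilon,\delta)\geq k_2:=k_2(\varepsilon,\delta)$; the opposite case is handled symmetrically by swapping the roles of $\pi_1$ and $\pi_2$. The rectangle $\psi_{\varepsilon|_{k_2}}([0,1]^2)$ has height $B_{\varepsilon|_{k_2}}$, and intersecting it with the vertical strip $\pi_1^{-1}\circ\pi_1(\psi_{\varepsilon|_{k_1}}([0,1]^2))$ produces a rectangle whose width equals that of $\psi_{\varepsilon|_{k_1}}([0,1]^2)$, namely $A_{\varepsilon|_{k_1}}$. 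The maximality in the definitions of $k_1,k_2$ gives $\delta\leq A_{\varepsilon|_{k_1}}<\delta/a_{i_{\varepsilon_{k_1+1}}}\leq \delta/a_{\min}$ and similarly $\delta\leq B_{\varepsilon|_{k_2}}\leq \delta/b_{\min}$, so both side lengths lie in $[\delta,C_0\delta]\subseteq[\delta,C\delta]$. Since $\Pi(\varepsilon)\in Q(\varepsilon,\delta)$, the diameter bound yields the containment $Q(\varepsilon,\delta)\subseteq B(\Pi(\varepsilon),\sqrt{2}C\delta)$.

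For part (b), I would use the fact that for each fixed $\delta>0$ the family $\{Q(\varepsilon,\delta):\varepsilon\in \Sigma^{\mathbb{N}}\}$ is finite and its members have pairwise disjoint interiors whose union is $[0,1]^2$: this reduces to the observation that $Q(\varepsilon,\delta)$ is completely determined by $\varepsilon|_{k_{\min}(\varepsilon,\delta)}$ together with the ``long-direction'' projection of $\varepsilon|_{k_{\max}(\varepsilon,\delta)}$, and that different such data index disjoint cells at matching scales while identical data give rise to the same rectangle. Any approximate square of ``radius'' $\delta$ meeting $B(x,\delta)$ is contained in $B(x,(1+\sqrt{2}C_0)\delta)$ by part (a), and each such square has area at least $\delta^2$. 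A standard area-packing argument then bounds the number of approximate squares meeting $B(x,\delta)$ by $\pi(1+\sqrt{2}C_0)^2$, a constant depending only on the IFS. Taking $C$ to be the maximum of $C_0$ and this covering constant delivers the uniform $C$ in the statement.

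The main obstacle is the tiling claim underpinning (b), namely that at each scale $\delta$ the approximate squares really do partition $[0,1]^2$ up to measure-zero overlap. This is essentially a combinatorial fact about the symbolic coding, where one must check that two approximate squares with distinct defining data sit in disjoint pieces of a finite tree structure on $\Sigma^{\mathbb{N}}$, while two with the same defining data coincide as subsets of $[0,1]^2$; both directional cases ($k_1\geq k_2$ and $k_2>k_1$) need to be reconciled on their common boundary. Once this structural statement is verified, the remaining geometric estimates in (a) and (b) are routine consequences of the bounded-ratio property of the IFS.
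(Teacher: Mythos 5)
Your part (a) is correct and is the routine computation: when $k_1\geq k_2$ the intersection defining $Q(\varepsilon,\delta)$ is a rectangle of width $A_{\varepsilon|_{k_1}}$ and height $B_{\varepsilon|_{k_2}}$, and the maximality of $k_1,k_2$ places both side lengths in $[\delta,\delta/\min\{a_{\min},b_{\min}\}]$, which gives the diameter bound since $\Pi(\varepsilon)\in Q(\varepsilon,\delta)$. Part (b), however, rests on a structural claim that is false and that you yourself leave unverified: at a fixed scale $\delta$ the approximate squares do \emph{not} have pairwise disjoint interiors, and their union is not $[0,1]^2$ unless $\mathcal{J}$ is the full grid $\{1,\cdots,r\}\times\{1,\cdots,s\}$. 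The disjointness fails because $k_{\max}(\varepsilon,\delta)$ depends on the word, not only on $\delta$, so your ``defining data'' live at different depths and can be nested rather than indexing cells of one partition. Concretely, take columns of widths $1/2,1/4,1/4$ and eight rows of height $1/8$, and two words $\varepsilon,\varepsilon'$ agreeing up to level $k$ whose $(k+1)$-st letters lie in the wide and in a narrow column respectively; for $A_{\varepsilon|_k}/4<\delta\leq A_{\varepsilon|_k}/2$ one gets $k_1(\varepsilon,\delta)\geq k+1$ while $k_1(\varepsilon',\delta)=k$, and since all heights at level $k+1$ equal $8^{-(k+1)}<4^{-(k+1)}<\delta$ one has $k_2(\varepsilon,\delta)=k_2(\varepsilon',\delta)\leq k$, so both squares are of $1$-type with the same $k_2$-cylinder and $Q(\varepsilon,\delta)\subsetneq Q(\varepsilon',\delta)$, with intersecting interiors. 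Consequently the area-packing step (dividing the area of $B(x,(1+\sqrt{2}C_0)\delta)$ by $\delta^2$) does not bound the number of distinct approximate squares meeting $B(x,\delta)$, and part (b) is left unproved; this is a genuine gap, not a routine verification.

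For context, the paper does not prove this lemma either: it is quoted from Fraser \cite[Lemma 7.1]{F14}, so there is no internal proof to compare against, and citing that lemma would have been a legitimate way to close (b). If you want a self-contained argument, the nesting exhibited above is precisely what must be handled: the column cylinders $\pi_1\big(\psi_{\varepsilon|_k}([0,1]^2)\big)$ (and likewise the row cylinders) form a laminar family in which any two members are nested or have disjoint interiors, so among the squares $Q(\varepsilon',\delta)$ attached to the points $\Pi(\varepsilon')\in B(x,\delta)\cap K$ one should pass to suitably chosen maximal ones, treating the $1$-type and $2$-type squares separately, before any counting or volume comparison can give a uniform constant; note also that (b) must be read as covering $B(x,\delta)\cap K$, since approximate squares cannot cover the ball itself.
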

	
	For an approximate square $Q(\varepsilon,\delta)$, let 
	$$
	U(\varepsilon,\delta)=\left\{\varepsilon'\in {\Sigma}^\mathbb{N}:\psi_{\varepsilon'|_{k_{\max}(\varepsilon,\delta)}}(K)\subseteq Q(\varepsilon,\delta) \right\}.
	$$
	Note that for $\varepsilon'\in U(\varepsilon,\delta)$, we have 
	$$
	\varepsilon|_{k_{\min}(\varepsilon,\delta)}\prec \varepsilon',
	$$
	and if $k_1(\varepsilon,\delta)\geq k_2(\varepsilon,\delta)$, then 
	$$
	i_{\varepsilon'_{k_{2}(\varepsilon,\delta)+1}}=i_{\varepsilon_{k_{2}(\varepsilon,\delta)+1}},\cdots, i_{\varepsilon'_{k_{1}(\varepsilon,\delta)}}=i_{\varepsilon_{k_{1}(\varepsilon,\delta)}},
	$$
	if $k_2(\varepsilon,\delta)\geq k_1(\varepsilon,\delta)$, then 
	$$
	j_{\varepsilon'_{k_{1}(\varepsilon,\delta)+1}}=j_{\varepsilon_{k_{1}(\varepsilon,\delta)+1}},\cdots, j_{\varepsilon'_{k_{2}(\varepsilon,\delta)}}=j_{\varepsilon_{k_{2}(\varepsilon,\delta)}}.
	$$

	For $\varepsilon\in \Sigma^{\mathbb{N}}$ and $\delta>0$, we say 
	$$\left\{
	\begin{aligned}
		&Q(\varepsilon,\delta), U(\varepsilon,\delta) \text{ are of }\textbf{1}\textbf{\textit{-type}},\quad& & \text{ if }k_1(\varepsilon,\delta)\geq k_2(\varepsilon,\delta),\\
		&Q(\varepsilon,\delta), U(\varepsilon,\delta) \text{ are of }\textbf{2}\textbf{\textit{-type}},\quad& & \text{ if }k_2(\varepsilon,\delta)\geq k_1(\varepsilon,\delta).\\
	\end{aligned}\right.
	$$
	
	\begin{remark}\label{re1}
		For a \bara~ IFS $\{\psi_1,\cdots,\psi_d\}$, pick an $i\in \mathcal{J}_1$, we may define a new IFS as follows:
		$$
		\big \{\psi_{\varepsilon} \circ \psi_{\varepsilon'}: \varepsilon\in I_i, \varepsilon' \in \{1,\cdots,d\} \big \} \bigcup \big \{\psi_{\varepsilon}:\varepsilon\in \{1,\cdots,d\} \setminus I_i\big \}.
		$$
		Clearly, this IFS generates a same attractor $K$ as that of $\{\psi_1,\cdots, \psi_d\}$. According to this new IFS, the concept of $1$-type approximate squares is still well-defined.
	\end{remark}
	
	\begin{lemma}\label{le17}
		For a Borel probability measure $\nu$ on ${\Sigma}^{\mathbb{N}}$, let $\mu=\nu\circ\Pi^{-1}$.  If there is a constant $C>0$ such that for all $\varepsilon\in {\Sigma}^*$ and $l\in \{1,\cdots,d\}$, such that 
		$$
		\frac{\nu([\varepsilon l])}{\nu([\varepsilon])}\geq C.
		$$
		Then for all $\varepsilon\in {\Sigma}^*$, we have 
		\begin{equation}
			\mu\big (\psi_{\varepsilon}([0,1]^2)\big )=\nu([\varepsilon]). \nonumber
		\end{equation}
		
	\end{lemma}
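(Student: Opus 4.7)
My plan is to establish the identity by controlling the deficit $\mu(\psi_\varepsilon([0,1]^2))-\nu([\varepsilon])=\nu\bigl(\Pi^{-1}(\psi_\varepsilon([0,1]^2))\setminus[\varepsilon]\bigr)$. Since distinct level-$|\varepsilon|$ cylinder rectangles $\psi_{\varepsilon'}([0,1]^2)$ have pairwise disjoint interiors, any $\varepsilon''\in\Sigma^{\mathbb N}$ with $\Pi(\varepsilon'')\in\psi_\varepsilon([0,1]^2)$ but $\varepsilon\not\prec\varepsilon''$ must actually project onto the topological boundary $\partial\psi_\varepsilon([0,1]^2)$. So it suffices to prove $\nu\bigl(\Pi^{-1}(\partial\psi_\varepsilon([0,1]^2))\bigr)=0$, and then the sandwich $\nu([\varepsilon])\le\mu(\psi_\varepsilon([0,1]^2))\le\nu([\varepsilon])+0$ finishes the proof.

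The boundary is the union of two vertical and two horizontal edges, and by symmetry I only treat a vertical edge at some $x$-coordinate $x_0$. The condition $\pi_1(\Pi(\varepsilon''))=x_0$ depends only on the projected sequence $(i_{\varepsilon''_n})_{n\ge 1}\in\mathcal J_1^{\mathbb N}$, and a routine "mixed-base expansion" argument (using $\sum_{i\in\mathcal J_1}a_i\le 1$) shows that at most two sequences $\bar{\imath}\in\mathcal J_1^{\mathbb N}$ can sum to $x_0$. Consequently $\Pi^{-1}(\{x_0\}\times[0,1])$ is contained in the finite union of cylinders $C(\bar{\imath}):=\{\varepsilon''\in\Sigma^{\mathbb N}:i_{\varepsilon''_n}=\bar{\imath}_n\text{ for all }n\}$, and it remains to show each $\nu(C(\bar{\imath}))=0$.

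Here is where the hypothesis enters. From $\nu([\varepsilon'l])/\nu([\varepsilon'])\ge C$ for every $l$ together with $\sum_{l=1}^{d}\nu([\varepsilon'l])=\nu([\varepsilon'])$, for any proper subset $I\subsetneq\{1,\ldots,d\}$ one gets the complementary bound $\sum_{l\in I}\nu([\varepsilon'l])\le(1-(d-|I|)C)\nu([\varepsilon'])$. Applying this with $I=I_{\bar{\imath}_{N+1}}$ and summing over all $\varepsilon'\in\Sigma^N$ that satisfy $i_{\varepsilon'_n}=\bar{\imath}_n$ yields the recursion
$$F_{N+1}(\bar{\imath})\le\bigl(1-(d-|I_{\bar{\imath}_{N+1}}|)C\bigr)\,F_N(\bar{\imath}),$$
where $F_N(\bar{\imath})=\nu\bigl(\{\varepsilon'':i_{\varepsilon''_n}=\bar{\imath}_n,\,n\le N\}\bigr)$. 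Assumption \eqref{e38} that $K$ is not contained in a vertical line forces $|\mathcal J_1|\ge 2$ (otherwise iterating the IFS would shrink $\pi_1(K)$ to a single point), so $|I_{\bar{\imath}_{N+1}}|\le d-1$ and each factor is at most $1-C<1$. Iterating gives $F_N(\bar{\imath})\le(1-C)^N\to 0$, hence $\nu(C(\bar{\imath}))=0$. The horizontal edges are handled identically using $|\mathcal J_2|\ge 2$ and the $(j_{\varepsilon''_n})$-sequences, and the conclusion follows.

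The main obstacle I anticipate is bookkeeping rather than conceptual: correctly invoking Assumption \eqref{e38} at the right place to ensure the decay factor stays strictly below $1$, and verifying the "at most two expansions" step cleanly enough that one can reduce boundary measure to a finite union of the cylinders $C(\bar{\imath})$. Once these are handled, the geometric decay bound $(1-C)^N$ does all the work.
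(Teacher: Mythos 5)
Your argument is correct, but it takes a genuinely different route from the paper's. The paper proves the nontrivial inequality $\mu(\psi_{\varepsilon}([0,1]^2))\le\nu([\varepsilon])$ by a level-by-level overspill estimate: assuming first that some $\psi_l$ maps the unit square into $(0,1)^2$, it observes that any length-$k$ word whose rectangle meets $\psi_{\varepsilon}([0,1]^2)$ without extending $\varepsilon$ must avoid the letter $l$ beyond level $n=|\varepsilon|$, and the hypothesis then bounds the total $\nu$-mass of such words by $(1-C)^{k-n}$; when no single map is interior it passes to the second-iterate IFS (Assumption \eqref{e38} supplies an interior two-letter word) and treats odd lengths by summing over children. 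You instead identify the deficit with the $\nu$-mass of codings landing on $\partial\psi_{\varepsilon}([0,1]^2)$, reduce each edge to a full vertical or horizontal line, cover the preimage of a line by at most two column (resp.\ row) cylinders via the at-most-two-expansions observation, and kill each such cylinder by the same $1-C$ geometric decay, invoking Assumption \eqref{e38} only to guarantee $\#\mathcal{J}_1\ge 2$ and $\#\mathcal{J}_2\ge 2$. Both proofs use the hypothesis in the identical way (conditioned on any cylinder, a proper subset of children carries mass at most $1-C$ of it); yours avoids the paper's case split and parity bookkeeping, at the cost of the expansion-counting step, which you only sketch but which is indeed routine (branching can occur at most once, at a shared endpoint of two used column intervals, after which the relative coordinate is frozen at $0$ or $1$, and at most one used interval starts at $0$ or ends at $1$). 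One small correction: the operative fact there is not $\sum_{i\in\mathcal{J}_1}a_i\le 1$ but that the used column intervals are distinct grid cells of positive length with pairwise disjoint interiors.
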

	\begin{proof}
		It is similar to \cite[Lemma 7]{FW05} due to Feng and Wang. Here we provide a proof for completeness.  Noticing that for any $\varepsilon\in \Sigma^*$,  $\Pi([\varepsilon])\subseteq \psi_{\varepsilon}([0,1]^2)$, it suffices to prove that 
		\begin{equation}\label{e46}
			\mu\big (\psi_{\varepsilon}([0,1]^2)\big )\leq \nu([\varepsilon]).
		\end{equation}
		First we consider the case that there is  $l\in \{1,\cdots,d\}$ such that $\psi_l([0,1]^2)\subseteq (0,1)^2$. Let $n=|\varepsilon|$. For $k\geq n$, let 
		$$
		\mathcal{B}_{k}=\{\varepsilon'\in {\Sigma}^*: |\varepsilon'|=k, \psi_{\varepsilon'}([0,1]^2)\cap \psi_{\varepsilon}([0,1]^2)\neq \emptyset, \varepsilon \nprec \varepsilon'\}.
		$$
		For $\varepsilon'=\varepsilon'_1\cdots\varepsilon'_k\in \mathcal{B}_k$, note that $\varepsilon'_m\neq l$ for all $n<m\leq k$. Therefore 
		\begin{align*}
			\mu\big (\psi_{\varepsilon}([0,1]^2)\big )&\leq \nu([\varepsilon]) +\sum_{\varepsilon'\in \mathcal{B}_k}\nu([\varepsilon'])  \leq \nu([\varepsilon]) + \sum_{\varepsilon'\in \mathcal{B}_k}\nu([\varepsilon'|_n]) \cdot \frac{\nu([\varepsilon'|_{n+1}])}{\nu([\varepsilon'|_n])}  \cdots  \frac{\nu([\varepsilon'|_k])}{\nu([\varepsilon'|_{k-1}])}
			\\
			&\leq \nu([\varepsilon]) + \sum_{\varepsilon'_1\cdots\varepsilon'_n\in {\Sigma}^*}\nu([\varepsilon'_1\cdots\varepsilon'_n]) \cdot \prod_{m=n+1}^k \sum_{\varepsilon_{m}'\neq l} \frac{\nu([\varepsilon'_1\cdots\varepsilon'_{m}])}{\nu([\varepsilon'_1\cdots\varepsilon'_{m-1}])} 
			\\
			&\leq \nu([\varepsilon])+(1-C)^{k-n},
		\end{align*}
		which implies that $\mu\big (\psi_{\varepsilon}([0,1]^2)\big )\leq \nu([\varepsilon])$ by letting $k\to \infty$. 
		
		When for all $l\in \{1,\cdots,d\}$, $\psi_l([0,1])\setminus (0,1)^2\neq \emptyset$, by Assumption \eqref{e38}, there exists $\varepsilon=\varepsilon_1\varepsilon_2\in \Sigma^*$, such that $\psi_{\varepsilon}([0,1]^2)\subseteq (0,1)^2$. Using a similar argument with respect to IFS $\{\psi_{\varepsilon}:\varepsilon=\varepsilon_1\varepsilon_2\in \Sigma^*\}$, we know that $\eqref{e46}$ follows for all $\varepsilon\in \bigcup_{n\in \mathbb{N}}\{1,\cdots,d\}^{2n}$. For $\varepsilon\in \Sigma^*$ such that $|\varepsilon|$ is odd, noticing that 
		$$
		\mu\big (\psi_{\varepsilon}([0,1]^2) \big ) \leq \sum_{l=1}^d \mu \big (\psi_{\varepsilon l}([0,1]^2) \big ) =\sum_{l=1}^d \nu([\varepsilon l])=\nu([\varepsilon]),
		$$
		we still have \eqref{e46}.
	\end{proof}
	
	\begin{lemma}\label{le12}
		For $\bm{q}\in \emph{int}(\mathcal{S})$, let $\nu_{\bm{q}}$ and $\mu_{\bm{q}}$ be the Bernoulli and self-affine measures associated with $\bm{q}$, respectively. For $\varepsilon=\varepsilon_1\varepsilon_2\cdots\in {\Sigma}^\mathbb{N}$ and $\delta>0$, write $k_{\max}=k_{\max}(\varepsilon,\delta)$ and $k_{\min}=k_{\min}(\varepsilon,\delta)$, we have 
		$$
		\mu_{\bm{q}}\big(Q(\varepsilon,\delta)\big)=q_{\varepsilon_1}\cdots q_{\varepsilon_{k_{\min}}} \left\{
		\begin{aligned}
			&R_{i_{\varepsilon_{k_{\min}+1}}}(\bm{q})\cdots R_{i_{\varepsilon_{k_{\max}}}}(\bm{q}),  & & \text{ if } Q(\varepsilon,\delta) \text{ if of }1\text{-type}, 
			\\
			&S_{i_{\varepsilon_{k_{\min}+1}}}(\bm{q})\cdots S_{i_{\varepsilon_{k_{\max}}}}(\bm{q}),  & & \text{ if } Q(\varepsilon,\delta) \text{ if of }2\text{-type}.
		\end{aligned}\right.
		$$
	\end{lemma}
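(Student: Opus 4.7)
The plan is to reduce the $\mu_{\bm{q}}$-measure of an approximate square to a sum over level-$k_{\max}$ cylinders via Lemma \ref{le17}, and then evaluate that sum by exploiting the product structure of the Bernoulli measure.

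Since $\bm{q} \in \text{int}(\mathcal{S})$ we have $c := \min_{1 \le l \le d} q_l > 0$, and the hypothesis of Lemma \ref{le17} is satisfied with this $c$. This gives
$$\mu_{\bm{q}}\bigl(\psi_{\varepsilon'}([0,1]^2)\bigr) = \nu_{\bm{q}}([\varepsilon']) \quad \text{for every } \varepsilon' \in \Sigma^*.$$
Summing over all $\varepsilon' \in \Sigma^n$ yields $\sum_{\varepsilon' \in \Sigma^n}\mu_{\bm{q}}(\psi_{\varepsilon'}([0,1]^2)) = 1 = \mu_{\bm{q}}(K)$, so subadditivity is tight. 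Consequently, for distinct $\varepsilon'_1, \varepsilon'_2 \in \Sigma^n$ the intersection $\psi_{\varepsilon'_1}([0,1]^2) \cap \psi_{\varepsilon'_2}([0,1]^2)$ is $\mu_{\bm{q}}$-null, and $\mu_{\bm{q}}$ is concentrated on $\bigcup_{\varepsilon' \in \Sigma^n} \psi_{\varepsilon'}([0,1]^2)$.

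Next I would identify which level-$k_{\max}$ cylinders comprise the approximate square. In the 1-type case ($k_1 \geq k_2$, so $k_{\min} = k_2$, $k_{\max} = k_1$), let
$$\mathcal{A} = \bigl\{\varepsilon' \in \{1,\dots,d\}^{k_1} : \varepsilon'|_{k_2} = \varepsilon|_{k_2} \text{ and } i_{\varepsilon'_m} = i_{\varepsilon_m} \text{ for } k_2 < m \leq k_1\bigr\}.$$
Because the next $k_1 - k_2$ subdivisions of the level-$k_2$ rectangle $\psi_{\varepsilon|_{k_2}}([0,1]^2)$ place each level-$k_1$ cylinder in a specific vertical column determined by its $i$-sequence, the elements of $\mathcal{A}$ are precisely the level-$k_1$ cylinders contained in $Q(\varepsilon,\delta)$; any other level-$k_1$ cylinder inside $\psi_{\varepsilon|_{k_2}}([0,1]^2)$ lies in a different vertical strip and meets $Q(\varepsilon,\delta)$ at most along one lateral edge. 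Combining this with the tight subadditivity above,
$$\mu_{\bm{q}}(Q(\varepsilon,\delta)) = \mu_{\bm{q}}\Bigl(\bigcup_{\varepsilon' \in \mathcal{A}} \psi_{\varepsilon'}([0,1]^2)\Bigr) = \sum_{\varepsilon' \in \mathcal{A}} \nu_{\bm{q}}([\varepsilon']).$$

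Finally, the product form of $\nu_{\bm{q}}$ allows explicit evaluation:
$$\sum_{\varepsilon' \in \mathcal{A}} \nu_{\bm{q}}([\varepsilon']) = \Bigl(\prod_{m=1}^{k_2} q_{\varepsilon_m}\Bigr) \prod_{m=k_2+1}^{k_1} \Bigl(\sum_{l \in I_{i_{\varepsilon_m}}} q_l\Bigr) = q_{\varepsilon_1}\cdots q_{\varepsilon_{k_{\min}}}\prod_{m=k_{\min}+1}^{k_{\max}} R_{i_{\varepsilon_m}}(\bm{q}),$$
which is the asserted 1-type formula. The 2-type case ($k_2 > k_1$) is verbatim after interchanging horizontal and vertical: the defining constraint on $\mathcal{A}$ becomes $j_{\varepsilon'_m} = j_{\varepsilon_m}$ for $k_1 < m \leq k_2$, and $R_{i_{\varepsilon_m}}(\bm{q})$ is replaced by $S_{j_{\varepsilon_m}}(\bm{q})$. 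The only genuine obstacle is in the third step: ensuring that the lateral edges shared between distinct level-$k_{\max}$ cylinders carry no $\mu_{\bm{q}}$ mass, for which Lemma \ref{le17} together with the tight subadditivity argument is exactly the tool required; the rest reduces to a routine enumeration.
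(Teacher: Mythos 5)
Your outline follows the same route the paper intends (its proof of this lemma is just an appeal to Lemma \ref{le17}): identify $\mu_{\bm q}\big(\psi_{\varepsilon'}([0,1]^2)\big)=\nu_{\bm q}([\varepsilon'])$ via Lemma \ref{le17} (the hypothesis holds with $C=\min_l q_l>0$ since $\bm q\in\mathrm{int}(\mathcal{S})$), single out the family $\mathcal{A}$ of level-$k_{\max}$ words whose rectangles fill $Q(\varepsilon,\delta)$, and sum the Bernoulli weights; the lower bound $\mu_{\bm q}(Q(\varepsilon,\delta))\geq\sum_{\varepsilon'\in\mathcal{A}}\nu_{\bm q}([\varepsilon'])$ and the final factorization into $q_{\varepsilon_1}\cdots q_{\varepsilon_{k_{\min}}}\prod_{m>k_{\min}}R_{i_{\varepsilon_m}}(\bm q)$ are correct, as is the tight-subadditivity observation that distinct same-level cylinder rectangles have $\mu_{\bm q}$-null intersection (this also disposes of all words whose $k_{\min}$-prefix differs from $\varepsilon|_{k_{\min}}$).

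The gap is in the step you yourself flag as the ``only genuine obstacle''. For a word $\varepsilon'$ of length $k_{\max}$ with $\varepsilon'|_{k_{\min}}=\varepsilon|_{k_{\min}}$ but a different column sequence, the set $\psi_{\varepsilon'}([0,1]^2)\cap Q(\varepsilon,\delta)$ is a segment of a lateral edge of $Q(\varepsilon,\delta)$, and this segment need \emph{not} be contained in any cylinder other than $\psi_{\varepsilon'}([0,1]^2)$ itself: the $\mathcal{A}$-cylinders in general do not cover the whole lateral edge, and the adjacent column may simply contain no chosen rectangle at that height. So these pieces are not ``shared between distinct level-$k_{\max}$ cylinders'', and tight subadditivity does not make them null. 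What is actually needed is that vertical lines carry no $\mu_{\bm q}$-mass; since the offending segment sits on the cylinder's own vertical edge, by self-affinity it suffices that $\mu_{\bm q}\big(\{0,1\}\times[0,1]\big)=0$, i.e. $\pi_1\mu_{\bm q}(\{0\})=\pi_1\mu_{\bm q}(\{1\})=0$. This is a short extra argument you must add: $\pi_1\mu_{\bm q}$ is self-similar with weights $R_i(\bm q)$, $i\in\mathcal{J}_1$, and by Assumption \eqref{e38} we have $\#\mathcal{J}_1\geq 2$, hence $R_i(\bm q)<1$ for every $i$; then $\pi_1\mu_{\bm q}(\{1\})=R_r(\bm q)\,\pi_1\mu_{\bm q}(\{1\})$ if $r\in\mathcal{J}_1$ (and $1\notin\pi_1(K)$ otherwise), forcing $\pi_1\mu_{\bm q}(\{1\})=0$, and similarly at $0$; the $2$-type case needs the analogous statement for horizontal lines using $\#\mathcal{J}_2\geq 2$. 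With this observation inserted, your upper bound, and hence the claimed equality, is complete.
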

	\begin{proof}
		This follows from Lemma \ref{le17}.
	\end{proof}

	For $\varepsilon=\varepsilon_1\cdots\varepsilon_n\in {\Sigma}^{*}$, let $L_{\varepsilon}=\max \{A_{\varepsilon},B_{\varepsilon}\}$. For $\varepsilon\in {\Sigma}^{\mathbb{N}}$ and $n\in \mathbb{N}$, write $$Q(\varepsilon,n):=Q(\varepsilon,L_{\varepsilon|_n}), \quad\quad U(\varepsilon,n):=U(\varepsilon,L_{\varepsilon|_n}).$$
	
	\begin{lemma}\label{le21}
		For $\varepsilon\in \Sigma^{\mathbb{N}}$ and $n\geq 1$, then 
		\begin{enumerate}[(a)]
			\item \label{e110} $L_{\varepsilon|_n}=A_{\varepsilon|_n}$ if and only if $Q(\varepsilon,n)$ is of $1$-type,
			\item $L_{\varepsilon|_n}=B_{\varepsilon|_n}$ if and only if $Q(\varepsilon,n)$ is of $2$-type.
		\end{enumerate}
	\end{lemma}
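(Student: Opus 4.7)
The plan is to reduce both parts to the elementary observation that because each $a_i, b_j \in (0,1)$, the sequences $(A_{\varepsilon|_k})_{k \geq 1}$ and $(B_{\varepsilon|_k})_{k \geq 1}$ are strictly decreasing in $k$. In particular, $k_1(\varepsilon, A_{\varepsilon|_n}) = n$ and $k_2(\varepsilon, B_{\varepsilon|_n}) = n$ follow directly from the definitions of $k_1, k_2$.

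For part \eqref{e110}, I would argue both directions by cases on which of $A_{\varepsilon|_n}, B_{\varepsilon|_n}$ realizes the maximum. Suppose first $L_{\varepsilon|_n} = A_{\varepsilon|_n}$, so $B_{\varepsilon|_n} \leq A_{\varepsilon|_n}$. Setting $\delta = L_{\varepsilon|_n} = A_{\varepsilon|_n}$, monotonicity gives $k_1(\varepsilon, \delta) = n$. For $k \geq n$, monotonicity of $B$ yields $B_{\varepsilon|_k} \leq B_{\varepsilon|_n} \leq \delta$, with strict inequality once $k > n$; hence $k_2(\varepsilon, \delta) \leq n = k_1(\varepsilon, \delta)$, which is exactly the $1$-type condition. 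Conversely, suppose $L_{\varepsilon|_n} \neq A_{\varepsilon|_n}$, so $L_{\varepsilon|_n} = B_{\varepsilon|_n}$ and $B_{\varepsilon|_n} > A_{\varepsilon|_n}$. Now $\delta = B_{\varepsilon|_n}$ gives $k_2(\varepsilon, \delta) = n$, while $A_{\varepsilon|_k} \leq A_{\varepsilon|_n} < \delta$ for all $k \geq n$, forcing $k_1(\varepsilon, \delta) < n = k_2(\varepsilon, \delta)$; hence $Q(\varepsilon, n)$ is strictly $2$-type and not $1$-type, which gives the contrapositive.

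Part (b) follows by an identical argument with the roles of $A$ and $B$, $k_1$ and $k_2$, and $1$-type and $2$-type interchanged.

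This is essentially a bookkeeping lemma about the definitions, so I do not expect any real obstacle. The only subtlety is the borderline case $A_{\varepsilon|_n} = B_{\varepsilon|_n}$, in which $L_{\varepsilon|_n}$ equals both values and $Q(\varepsilon, n)$ is simultaneously of $1$-type and $2$-type (because $k_1 = k_2 = n$); the iff statements in both (a) and (b) then hold trivially on both sides, so this case is consistent with the lemma.
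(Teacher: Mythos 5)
Your proof is correct and follows essentially the same route as the paper: both arguments are pure bookkeeping with the defining inequalities of $k_1(\varepsilon,\delta)$, $k_2(\varepsilon,\delta)$ and the strict monotonicity of $A_{\varepsilon|_k}$, $B_{\varepsilon|_k}$, the only cosmetic difference being that you argue the converse by contrapositive while the paper argues by contradiction. Your remark on the borderline case $A_{\varepsilon|_n}=B_{\varepsilon|_n}$ is also consistent with the paper's conventions, since $1$-type and $2$-type overlap when $k_1=k_2$.
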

	\begin{proof}
		It suffices to prove part \eqref{e110}. Write $\delta=L_{\varepsilon|_n}$, $k_1=k_1(\varepsilon,\delta)$ and $k_2=k_2(\varepsilon,\delta)$ for short. Note that 
		\begin{equation}\label{e36}
			A_{\varepsilon|_{k_1+1}}<\delta=\max\{A_{\varepsilon|_n}, B_{\varepsilon|_n}\}\leq A_{\varepsilon|_{k_1}}, \quad
			B_{\varepsilon|_{k_2+1}}<\delta=\max\{A_{\varepsilon|_n}, B_{\varepsilon|_n}\}\leq B_{\varepsilon|_{k_2}}.
		\end{equation}
		When $Q(\varepsilon,n)$ is of $1$-type, 
		if $A_{\varepsilon|_n}< B_{\varepsilon|_n}$, by the right of \eqref{e36}, $k_1\geq k_2=n$, so the left of \eqref{e36} gives $B_{\varepsilon|_n}\leq A_{\varepsilon|_{k_1}} \leq A_{\varepsilon|_n}$, a contradiction. Therefore $A_{\varepsilon|_n}\geq  B_{\varepsilon|_n}$ and $L_{\varepsilon|_n}=A_{\varepsilon|_n}$. Conversely, when $L_{\varepsilon|_n}=A_{\varepsilon|_n}$, by the left of \eqref{e36}, $k_1=n$, so the right of \eqref{e36} gives $B_{\varepsilon|_n}\leq A_{\varepsilon|_n}\leq B_{\varepsilon|_{k_2}}$. Then $k_2\leq n=k_1$, thus $Q({\varepsilon,n})$ is of $1$-type.
	\end{proof}

	\section{Proof of ``$\Longrightarrow$'' in Theorem \ref{th1}-\eqref{e41}  }\label{sec4}
	In this section, we prove that if  \ref{H} does not hold, then $\mathcal{H}^{\dimh K}(K)=+\infty$. 
	
	Let $\varphi:[0,+\infty) \to [0,+\infty)$ be an increasing and continuous function with $\varphi(0)=0$, called a \textit{gauge function}. The $\varphi$\textit{-Hausdorff measure} $\mathcal{H}^{\varphi}(K)$ for $K\subseteq \mathbb{R}^2$ with respect to $\varphi$ is defined as
	$$
	\mathcal{H}^{\varphi}(K)=\lim_{\delta\to0^+} \inf \left \{\sum_{l=1}^\infty \varphi(|U_l|):\{U_l\}_{l\geq 1} \text{ is a }\delta\text{-cover of }K \right\}.
	$$
	Taking $\varphi(t)=t^s$ results in the common $s$-dimensional Hausdorff measure $\mathcal{H}^s$. We will prove that $\mathcal{H}^{\varphi_c}(K)=+\infty$ holds for small $c>0$, where $\varphi_c$ is in the form:
	\begin{equation}\label{e90}
		\varphi_c(t)=t^{\dimh K} \exp\left({-c \frac{|\log t|}{(\log |\log t|)^2}}\right),
	\end{equation}
	which immediately yields that $\mathcal{H}^{\dimh K}(K)=+\infty$. 
	
	We state the following proposition adjusted from the  Rogers-Taylor density theorem \cite{RT61} for \bara~carpets.  
	
	\begin{proposition}\label{prop4}
		Let $\tilde{\Sigma}$ be a Borel subset of $\Sigma^{\mathbb{N}}$ and $\mu$ be a finite Borel measure on $\mathbb{R}^2$ such that $\mu\big(\Pi(\tilde{\Sigma})\big)>0$. Then for a gauge function $\varphi$, if for all $\varepsilon\in \tilde{\Sigma}$,  
		$$
		\liminf_{n \to \infty} \frac{\varphi(L_{\varepsilon|_n})}{\mu \big(Q(\varepsilon,n) \big)}=+\infty,
		$$
		we have
		$
		\mathcal{H}^{\varphi}\big(\Pi(\tilde{\Sigma})\big)=+\infty.
		$
	\end{proposition}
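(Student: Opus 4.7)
The plan is to adapt the classical Rogers--Taylor density theorem to approximate squares: under the pointwise ``mass-vanishing'' hypothesis, every $\delta$-cover of $\Pi(\tilde\Sigma)$ must pay $\varphi$-mass proportional to $\mu(\Pi(\tilde\Sigma))$, with the proportionality constant growing without bound.

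First, I would uniformize the hypothesis by an Egorov-type argument. For each $M\in\mathbb{N}$, the Borel sets
$$
\tilde\Sigma_{M,N}:=\big\{\varepsilon\in\tilde\Sigma:\mu(Q(\varepsilon,n))\leq \varphi(L_{\varepsilon|_n})/M \text{ for all } n\geq N\big\}
$$
increase to $\tilde\Sigma$ in $N$, so continuity of measure yields $N=N(M)$ with $\mu(\Pi(\tilde\Sigma_{M,N}))\geq \tfrac12\mu(\Pi(\tilde\Sigma))$. Write $E_M:=\Pi(\tilde\Sigma_{M,N(M)})$.

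Next, set $c_0:=\min_l\{a_{i_l},b_{j_l}\}$ and take any $\delta\leq c_0^{N(M)}$ and any $\delta$-cover $\{U_l\}$ of $\Pi(\tilde\Sigma)$. For each $U_l$ meeting $E_M$, fix $x_l\in U_l\cap E_M$ so that $U_l\subseteq B(x_l,|U_l|)$, and apply Lemma~\ref{le11}\eqref{e110}'s companion statement (b) to cover the ball by at most $C$ approximate squares of scale $|U_l|$. Because approximate squares at a common scale are pairwise disjoint up to boundary (inherited from the disjointness of the cylinder rectangles $\psi_{\varepsilon|_k}([0,1]^2)$ at each level), each such approximate square meeting $E_M$ coincides, modulo boundary, with $Q(\varepsilon,|U_l|)$ for some $\varepsilon\in\tilde\Sigma_{M,N(M)}$ whose projection it contains. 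Setting $n:=k_{\max}(\varepsilon,|U_l|)$, the choice of $\delta$ forces $L_{\varepsilon|_{N(M)}}\geq c_0^{N(M)}\geq|U_l|$, hence $n\geq N(M)$. The monotonicity $Q(\varepsilon,|U_l|)\subseteq Q(\varepsilon,n)$ together with $L_{\varepsilon|_n}\leq |U_l|/c_0$ then yields
$$
\mu(Q(\varepsilon,|U_l|))\leq \mu(Q(\varepsilon,n))\leq \varphi(L_{\varepsilon|_n})/M\leq \varphi(|U_l|/c_0)/M.
$$
Summing over $l$, $\tfrac12\mu(\Pi(\tilde\Sigma))\leq \mu(E_M)\leq (C/M)\sum_l\varphi(|U_l|/c_0)$. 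For the target gauge $\varphi_c$ in \eqref{e90} (doubling as $t\to 0^+$, since the correction factor $\exp(-c|\log t|/(\log|\log t|)^2)$ varies negligibly across a fixed ratio $1/c_0$), $\varphi(|U_l|/c_0)\leq C'\varphi(|U_l|)$, so $\sum_l\varphi(|U_l|)\geq M\mu(\Pi(\tilde\Sigma))/(2CC')$. Since this lower bound holds for every sufficiently fine cover, $\mathcal{H}^{\varphi}(\Pi(\tilde\Sigma))\geq M\mu(\Pi(\tilde\Sigma))/(2CC')$ for every $M$, and letting $M\to\infty$ yields $\mathcal{H}^{\varphi}(\Pi(\tilde\Sigma))=+\infty$.

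The principal obstacle is the ``identification'' step: matching an abstract approximate square $Q(\eta,|U_l|)$ meeting $E_M$ with a representative $Q(\varepsilon,|U_l|)$ for some $\varepsilon\in\tilde\Sigma_{M,N(M)}$ lying in it, so that the hypothesis can be invoked. This reduces to the fact that, modulo a common boundary, approximate squares at a fixed scale partition $K$, a consequence of the disjointness (up to boundary) of the underlying cylinder rectangles. A secondary subtlety is the continuous-to-discrete scale conversion $|U_l|\leftrightarrow L_{\varepsilon|_n}$, which costs a doubling constant absorbed into $C'$ for well-behaved gauges like $\varphi_c$.
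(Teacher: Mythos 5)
Your overall architecture (Egorov-type uniformization, reduction of a $\delta$-cover to comparable approximate squares, per-square mass bound, summation) is the right skeleton for the density-theorem argument the paper invokes via Rogers--Taylor. But the step you yourself flag as the principal obstacle is resolved by a claim that is false for Bara\'nski carpets: approximate squares at a common scale $\delta$ are \emph{not} pairwise disjoint up to boundary, because the cutoff levels $k_1(\varepsilon,\delta),k_2(\varepsilon,\delta)$ depend on the coding, not just on $\delta$. For instance, with $r=s=2$, $a_1=0.6$, $a_2=0.4$, $b_1=b_2=0.5$, all four cells selected and $\delta=0.3$, a coding whose first two column symbols are $1,1$ gives the approximate square $[0,0.36]\times[0,0.5]$, while a coding with column symbols $1,2$ gives $[0,0.6]\times[0,0.5]$: two approximate squares at the same scale, one strictly contained in the other (partial overlaps can also occur). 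Consequently, an approximate square produced by Lemma~\ref{le11}(b) that meets $E_M$ need not coincide (mod boundary) with $Q(\varepsilon,|U_l|)$ for any $\varepsilon\in\tilde\Sigma_{M,N(M)}$; it may strictly contain every such controlled square, and then the inequality you need runs the wrong way: the hypothesis bounds $\mu(Q(\varepsilon,|U_l|))$, not the measure of the larger covering square, whose coding need not lie in $\tilde\Sigma$ at all. (The disjointness you appeal to does hold for Bedford--McMullen carpets, where all ratios are equal and the cutoff levels depend only on $\delta$, which is likely the source of the slip.)

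The gap is repairable, but by a different mechanism than identification: for each $y\in U_l\cap E_M$ take a coding $\varepsilon\in\tilde\Sigma_{M,N(M)}$ of $y$ and use its \emph{own} square $Q(\varepsilon,n)$ at a scale comparable to $|U_l|$; these squares cover $U_l\cap E_M$, and their number is bounded by a constant because each is a product of an $x$-interval and a $y$-interval from the two hierarchical nets generated by the column and row subdivisions, with side lengths comparable to $|U_l|$ and meeting $U_l$ (in a nested net, boundedly many intervals of comparable length meet a fixed interval of that length). This gives $\mu(U_l\cap E_M)\leq C''\varphi(\cdot)/M$ with controlled multiplicity $C''$, which is what your summation step actually needs. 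A secondary deficiency: the proposition is stated for an arbitrary gauge, while your final comparison $\varphi(|U_l|/c_0)\leq C'\varphi(|U_l|)$ uses the doubling of the specific gauge $\varphi_c$ from \eqref{e90}; this is avoidable (and the general statement recovered) by choosing $n$ minimal with $L_{\varepsilon|_n}\leq |U_l|$ rather than $n=k_{\max}(\varepsilon,|U_l|)$, so that $\varphi$ is evaluated at a point $\leq |U_l|$ and monotonicity alone suffices, at the harmless cost that the squares have sides only $\geq c_0|U_l|$.
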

	
	Define 
	\begin{equation}\label{e65}
		\begin{aligned}
			\Sigma_1&=\left\{\varepsilon\in {\Sigma}^\mathbb{N}:Q(\varepsilon,n) \text{ is of $1$-type for infinitely many } n\in \mathbb{N}  \right\},
			\\
			\Sigma_2&=\left\{\varepsilon\in {\Sigma}^\mathbb{N}:Q(\varepsilon,n) \text{ is of $2$-type for infinitely many } n\in \mathbb{N} \right\}.
		\end{aligned}
	\end{equation}
	Let $K_1=\Pi(\Sigma_1)$ and $K_2=\Pi(\Sigma_2)$. Note that ${\Sigma}^{\mathbb{N}}=\Sigma_1\cup \Sigma_2$ and $K=K_1\cup K_2$. 
	
	Recall that in Lemma \ref{le7}, we have shown $\dimh K=\max \{G_1,G_2\}$. The following lemma is inspired by Peres's  work on Bedford-McMullen carpets \cite[Proposition 4]{P94}, but our situation is more complicated.
	
	\begin{lemma}\label{le18}
		Let $K$ be a  \bara~carpet. We have for some small $c>0$,
		\begin{enumerate}[(a)]
			\item \label{e61} if $\dimh K=G_1$ and $K$ does not have uniform vertical fibres, then $\mathcal{H}^{\varphi_c}(K_1)=+\infty$,
			\item \label{e62} if $\dimh K=G_2$ and $K$ does not have uniform horizontal fibres, then $\mathcal{H}^{\varphi_c}(K_2)=+\infty$,
		\end{enumerate}
		where $\varphi_c$ is defined in \eqref{e90}. 
	\end{lemma}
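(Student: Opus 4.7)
I would apply Proposition~\ref{prop4} with the self-affine measure $\mu:=\mu_{\bm q^{(1)}}$, where $\bm q^{(1)}$ is the maximiser of $g_1$ furnished by Lemma~\ref{le5}. By Proposition~\ref{le8}, $\bm q^{(1)}\in\mathrm{int}(\mathcal{S}_1)$, so $RA(\bm q^{(1)})>SB(\bm q^{(1)})$ strictly; the strong law of large numbers then implies that for $\nu_{\bm q^{(1)}}$-almost every $\varepsilon$ the approximate square $Q(\varepsilon,n)$ is of $1$-type for all sufficiently large $n$. Thus $\mu(K_1)=1$ and it suffices to produce a Borel subset $\tilde\Sigma\subseteq\Sigma_1\setminus\Sigma_2$ with $\mu(\Pi(\tilde\Sigma))>0$ on which $\liminf_n\varphi_c(L_{\varepsilon|_n})/\mu(Q(\varepsilon,n))=+\infty$. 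Part~(b) will follow by the symmetric argument, using the maximiser $\bm q^{(2)}$ of $g_2$, horizontal fibres, and the $2$-type form of Lemma~\ref{le12}.

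\textbf{Core computation.} For $\varepsilon\in\tilde\Sigma$ and $n$ large, plugging the explicit form $q_l^{(1)}=a_{i_l}^{\theta_1}b_{j_l}^{\lambda_1}T_{i_l}^{\rho_1-1}$ from Lemma~\ref{le5} (so that $R_i(\bm q^{(1)})=a_i^{\theta_1}T_i^{\rho_1}$, with $T_i:=\sum_{m\in I_i}b_{j_m}^{\lambda_1}$) into the $1$-type form of Lemma~\ref{le12}, and using $\theta_1+\lambda_1=\dimh K$ (read off from \eqref{e93}), an algebraic rearrangement yields
\[
\log\frac{\varphi_c(L_{\varepsilon|_n})}{\mu(Q(\varepsilon,n))}=\lambda_1\bigl(\log A_{\varepsilon|_n}-\log B_{\varepsilon|_{k_2}}\bigr)+(1-\rho_1)\sum_{m=1}^{k_2}\log T_{i_{\varepsilon_m}}-\rho_1\sum_{m=k_2+1}^{n}\log T_{i_{\varepsilon_m}}-c\,\frac{|\log A_{\varepsilon|_n}|}{(\log|\log A_{\varepsilon|_n}|)^2},
\]
with $k_2=k_2(\varepsilon,L_{\varepsilon|_n})$. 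The first summand is uniformly bounded since $A_{\varepsilon|_n}\le B_{\varepsilon|_{k_2}}<A_{\varepsilon|_n}/\min_j b_j$. The failure of uniform vertical fibres means that the values $\{T_i\}_{i\in\mathcal J_1}$ are not identically $1$, so $\log T_{i_{\varepsilon_m}}$ has strictly positive variance under $\nu_{\bm q^{(1)}}$, and the centred sum in the middle has variance of order $n$.

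\textbf{Main obstacle and conclusion.} The key remaining step is to show $\liminf_n\log(\varphi_c/\mu)=+\infty$ on a $\mu$-positive subset. The delicate point is that the centred sum of $\log T_{i_{\varepsilon_m}}$ has LIL envelope $\Theta(\sqrt{n\log\log n})$, comparable to but a priori dominated by the gauge correction rate $cn|RA|/(\log n)^2$; \emph{a priori} neither rate obviously dominates uniformly. Following the philosophy of Peres's argument for Bedford-McMullen carpets, the resolution is to choose $c$ small enough and isolate, via a Borel-Cantelli / moderate-deviation analysis, a positive-measure subset $\tilde\Sigma$ on which the positive fluctuations of the centred sum persist above the gauge correction for all large $n$. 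The analysis conditions on the random stopping time $k_2=\rho_1 n+O(\sqrt n)$ (which by SLLN is tightly concentrated) to decorrelate it from the increments $\log T_{i_{\varepsilon_m}}$, then applies sub-Gaussian concentration to the conditional partial sums. Once this is done, Proposition~\ref{prop4} immediately gives $\mathcal H^{\varphi_c}(K_1)\geq\mathcal H^{\varphi_c}(\Pi(\tilde\Sigma))=+\infty$, completing part~(a).
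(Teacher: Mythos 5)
Your setup (applying Proposition~\ref{prop4}, taking the maximiser $\bm{q}^{(1)}$ of $g_1$, noting $\bm{q}^{(1)}\in\mathrm{int}(\mathcal{S}_1)$ so that $1$-type squares dominate, and the exact identity for $\log\bigl(\varphi_c(L_{\varepsilon|_n})/\mu(Q(\varepsilon,n))\bigr)$) is correct, but the final step contains a genuine gap that cannot be repaired within your framework. For every fixed $c>0$ the gauge correction $c\,n|RA(\bm{q}^{(1)})|/(\log n)^2$ \emph{strictly} dominates the LIL envelope $\sqrt{n\log\log n}$ of the centred sum $\sum_m\log T_{i_{\varepsilon_m}}$ (their ratio tends to infinity), so under the Bernoulli measure $\nu_{\bm{q}^{(1)}}$ one has, almost surely, $\log\bigl(\varphi_c(L_{\varepsilon|_n})/\mu(Q(\varepsilon,n))\bigr)\to-\infty$ — the opposite of what Proposition~\ref{prop4} needs. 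No Borel--Cantelli or moderate-deviation argument can isolate a positive-$\nu_{\bm{q}^{(1)}}$-measure set on which the positive fluctuations persistently exceed a threshold of order $n/(\log n)^2$: the probability that the centred sum exceeds $c'n/(\log n)^2$ at time $n$ is sub-Gaussianly small (of order $\exp(-c''n/(\log n)^4)$), hence summable, so such exceedances occur only finitely often a.s.; and the law of the iterated logarithm makes the event of \emph{permanent} exceedance for all large $n$ a null event. Thus the set $\tilde\Sigma$ you hope to extract does not exist for this choice of $\mu$.

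The paper resolves exactly this obstacle by changing the measure rather than the probabilistic estimate, following Peres: note that Proposition~\ref{prop4} allows \emph{any} finite Borel measure, not the natural one. One takes the non-homogeneous product measure $\nu_\delta=\prod_n\bm{q}^{(n)}$ with $\bm{q}^{(n)}=\bm{q}^{(1)}+\frac{\delta}{\log n}\bm{u}$, where $\bm{u}$ is chosen orthogonal to $\bm{\alpha},\bm{\beta},\bm{e}$ but with $\langle\bm{u},\bm{\gamma}\rangle>0$ (here $\bm{\gamma}=(\log T_{i_l})_l$, which is nonzero precisely because uniform vertical fibres fail, and $\langle\bm{q}^{(1)},\bm{\gamma}\rangle=0$). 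This leaves $RA$ and $SB$, hence the $1$-type structure, $\rho_1$ and the leading terms, unchanged, but injects a \emph{deterministic} drift of order $\delta\,n/(\log n)^2$ with favourable sign into $\log\mu_\delta(Q(\varepsilon,n))$; choosing $c$ much smaller than $\delta$ then gives $\log\mu_\delta(Q(\varepsilon,n))-\log\varphi_c(L_{\varepsilon|_n})\to-\infty$ for $\nu_\delta$-a.e.\ $\varepsilon$, and Proposition~\ref{prop4} applies. Be aware also that such a $\bm{u}$ need not exist: the paper needs two further cases (when $\bm{\gamma}$ lies in the span of $\bm{\alpha},\bm{\beta},\bm{e}$), handled by passing to an iterated IFS or by perturbing in a direction that shifts $SB$ and hence $k_2(n)$; your sketch does not anticipate this case analysis either.
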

	
	\begin{proof}
		We only prove \eqref{e61} here, since \eqref{e62} follows in a similar way.
		\vspace{0.2cm}
		
		Let $\bm{q}=(q_1,\cdots,q_d)\in \mathcal{S}$ such that $g_1(\bm{q})=G_1=\dimh K$. Since $K$ does not have uniform vertical fibres, $\mathcal{J}_1 \times \mathcal{J}_2 \neq \mathcal{J}$. Using Lemma \ref{le5} and Proposition \ref{le8}, we have $\bm{q}\in \text{int}(\mathcal{S}_1)$ and $\bm{q}$ satisfies the following form:
		$$
		\bm{q}=\left( a_{i_l}^{\theta_1} b_{j_l}^{\lambda_1} \big( \sum_{m\in I_{i_l}} b_{j_m}^{\lambda_1} \big)^{\rho_1-1} \right)_{l=1}^d, \quad \quad R_i(\bm{q}) =\sum_{l\in I_i} q_l= a_i^{\theta_1} \big( \sum_{m\in I_{i}} b_{j_m}^{\lambda_1} \big)^{\rho_1},\quad \text{ for }i\in \mathcal{J}_1,
		$$
		where 
		\begin{equation}\label{e58}
			\theta_1=\frac{RR(\bm{q})}{RA(\bm{q})}, \quad 
			\lambda_1=\frac{QQ(\bm{q})-RR(\bm{q})}{SB(\bm{q})}, \quad
			\rho_1=\frac{RA(\bm{q})}{SB(\bm{q})}.
		\end{equation}
		For $l=1,\cdots,d$, let $\gamma_l=\sum_{m\in I_{i_l}} b_{j_m}^{\lambda_1}$. 
		Let $$
		\begin{aligned}
			\bm{\alpha}&=(\log a_{i_1},\log a_{i_2},\cdots,\log a_{i_d}),\\
			\bm{\beta}&=(\log b_{j_1},\log b_{j_2},\cdots,\log b_{j_d}),\\
			\bm{\gamma}&=(\log\gamma_1,\log \gamma_2,\cdots,\log \gamma_d).
		\end{aligned}
		$$
		be three vectors in $\mathbb{R}^d$. Let $\langle \cdot,\cdot \rangle$ be the standard inner product in Euclidean spaces. 
		Note that by \eqref{e83}
		$$
		RA(\bm{q})=\langle \bm{q},\bm{\alpha} \rangle,\quad
		SB(\bm{q})=\langle \bm{q},\bm{\beta} \rangle.
		$$
		Since $QQ(\bm{q})=\theta_1 RA(\bm{q})+\lambda_1 SB(\bm{q}) +(\rho_1-1) \langle \bm{q},\bm{\gamma} \rangle$ and $RR(\bm{q})=\theta_1 RA(\bm{q})+\rho_1 \langle \bm{q},\bm{\gamma} \rangle$, by \eqref{e58}, we see that 
		$$
		\lambda_1=\frac{QQ(\bm{q})-RR(\bm{q})}{SB(\bm{q})}=\frac{\lambda_1 SB(\bm{q})- \langle \bm{q},\bm{\gamma} \rangle }{SB(\bm{q})}
		$$
		which implies that $\langle \bm{q},\bm{\gamma} \rangle=0$. Note that $\bm{\gamma}$ is a non-zero vector since $K$ does not have uniform vertical fibres. Let $\bm{e}=(1,\cdots,1)\in \mathbb{R}^d$. We divide the proof into  three cases.
		\vspace{0.2cm}
		
		\textit{\textbf{Case 1.}} \textit{When $\text{rank}(\bm{\alpha},\bm{\beta},\bm{e},\bm{\gamma})=\text{rank}(\bm{\alpha},\bm{\beta},\bm{e})+1$.}
		\vspace{0.2cm}
		
		Note that the following equations have solutions:
		\begin{equation}\label{e43}
			\langle \bm{\alpha},\bm{u} \rangle =0,\quad 
			\langle \bm{\beta},\bm{u} \rangle =0,\quad
			\langle \bm{e},\bm{u} \rangle =0,\quad
			\langle \bm{\gamma},\bm{u} \rangle \neq 0.
		\end{equation}
		Choose a vector  $\bm{u}=(u_1,\cdots,u_d)$ satisfying \eqref{e43} and 
		$\Delta:=\langle \bm{\gamma},\bm{u} \rangle>0$. Fix a small $\delta>0$. Note that $\langle \bm{u}, \bm{e} \rangle=0$. For each $n\geq 2$, defined a probability vector $\bm{q}^{(n)}$ by
		$$
		\bm{q}^{(n)}=\left(1-\frac{\delta}{\log n}\right) \bm{q}  +\frac{\delta}{\log n} \left(\bm{q}+\bm{u} \right)=\bm{q}+ \frac{\delta}{\log n} \bm{u}.
		$$ 
		Let $\bm{q}^{(1)}=\bm{q}$. Denote $\nu_\delta$ to be the product measure $\prod_{n=1}^{+\infty} \bm{q}^{(n)}$ on ${\Sigma}^{\mathbb{N}}$, and let $\mu_{\delta}=\nu_{\delta}\circ \Pi^{-1}$ be the probability measure on $K$. 
		
		For $\varepsilon=\varepsilon_1\varepsilon_2\cdots\in {\Sigma}^{\mathbb{N}}$ and $n\in \mathbb{N}$. Using Lemma \ref{le17}, we have 
		\begin{equation}\label{e44}
			\begin{aligned}
				\log \mu_{\delta} \big (\psi_{\varepsilon|_n}([0,1]^2) \big ) =\log \nu_{\delta}([\varepsilon|_n])&= \sum_{l=1}^n \log q_{\varepsilon_l}^{(l)},
				\\
				\log \mu_{\delta} \big (\pi_1^{-1}\circ \pi_1\circ \psi_{\varepsilon|_n}([0,1]^2) \big ) &= \sum_{l=1}^n \log  R_{i_{\varepsilon_l}} (\bm{q}^{(l)}).
			\end{aligned}
		\end{equation}
		Note that 
		\begin{equation}\label{e45}
			\log A_{\varepsilon|_n}=\sum_{l=1}^n \log a_{i_{\varepsilon_l}}, \quad \log B_{\varepsilon|_n}=\sum_{l=1}^{n} \log b_{j_{\varepsilon_l}}.
		\end{equation}
		Regard the right side of \eqref{e44} and \eqref{e45} as the sums of independent random variables, by the law of iterated logarithm (cf. \cite[Section 7.3.3]{BP17}), we have 
		\begin{equation}\label{e47}
			\begin{aligned}
				\Big \lvert \sum_{l=1}^{n} \log q^{(l)}_{\varepsilon_l}-\sum_{l=1}^{n} \sum_{m=1}^d q^{(l)}_m \log q^{(l)}_m \Big \rvert &=O\big( \sqrt{n\log \log n} \big),
				\\
				\Big \lvert\sum_{l=1}^n \log R_{i_{\varepsilon_l}}(\bm{q}^{(l)}) -\sum_{l=1}^n \sum_{i\in \mathcal{J}_1} R_i(\bm{q}^{(l)}) \log R_i(\bm{q}^{(l)}) \Big \rvert &=O\big( \sqrt{n\log \log n} \big),
				\\
				\Big \lvert \sum_{l=1}^n \log a_{i_{\varepsilon_l}} -\sum_{l=1}^n \sum_{i\in \mathcal{J}_1} R_i(\bm{q}^{(l)}) \log a_i \Big \rvert &=O\big( \sqrt{n\log \log n} \big),
				\\
				\Big \lvert \sum_{l=1}^n \log b_{j_{\varepsilon_l}} -\sum_{l=1}^n \sum_{j\in \mathcal{J}_2} S_j(\bm{q}^{(l)}) \log b_j  \Big \rvert &=O\big( \sqrt{n\log \log n} \big),
			\end{aligned}
		\end{equation}
		for $\nu_\delta$ a.e. $\varepsilon\in {\Sigma}^{\mathbb{N}}$. Since $\bm{u}$ satisfies \eqref{e43}, we have 
		\begin{equation}\label{e55}
			\begin{aligned}
				RA(\bm{q}^{(l)})&=\sum_{m=1}^d q^{(l)}_m \log a_{i_m}=\langle \bm{q}+\frac{\delta}{\log l} \bm{u},\bm{\alpha} \rangle=RA(\bm{q}),
				\\
				SB(\bm{q}^{(l)})&=\sum_{m=1}^d q^{(l)}_m \log b_{j_m}=\langle \bm{q}+\frac{\delta}{\log l} \bm{u},\bm{\beta} \rangle=SB(\bm{q}).
			\end{aligned}
		\end{equation}

		It follows from $\bm{q}\in \text{int}(\mathcal{S}_1)$, $RA(\bm{q})>SB(\bm{q})$ and $0<\rho_1<1$. Thus by \eqref{e47}, for $\nu_{\delta}$ a.e. $\varepsilon\in \Sigma^{\mathbb{N}}$, $A_{\varepsilon|_n}$ is much greater than $B_{\varepsilon|_n}$ for all sufficiently large $n$. So that the approximate square $Q(\varepsilon,n)$ is of $1$-type and $\nu_{\delta}(\Sigma_1)=1$.
		
		Fix $\varepsilon\in \Sigma_1$ satisfying \eqref{e47}, and sufficiently large $n$. Write $k^{(n)}=k_{\min}(\varepsilon,L_{\varepsilon|_n})$. By Lemma \ref{le21},  $L_{\varepsilon|_n}=A_{\varepsilon|_n}$, and so $k_1(\varepsilon,L_{\varepsilon|_n})=n$,  $k^{(n)}=k_2(\varepsilon,L_{\varepsilon|_n}\leq n$. 
		It follows from \eqref{e47} and \eqref{e55}, we have 
		\begin{equation}\label{e95}
			\begin{aligned}
				\big|\log A_{\varepsilon|n} -n RA(\bm{q})\big|&=O\big( \sqrt{n\log \log n} \big),
				\\ 
				\big|\log B_{\varepsilon|_{k^{(n)}}}-k^{(n)}  SB(\bm{q})\big |&=O\big( \sqrt{n\log \log n} \big).
			\end{aligned}
		\end{equation}
		By Lemma \ref{le11},  $A_{\varepsilon|_n}\leq B_{\varepsilon|_{k^{(n)}}} \leq CA_{\varepsilon|_n}$. Combining this with \eqref{e95}, we have 
		\begin{equation}\label{e96}
			\big|k^{(n)}-\lfloor n\rho_1 \rfloor\big|=O\big( \sqrt{n\log \log n} \big),
		\end{equation}
		where $\lfloor x \rfloor$ denotes the largest integer less than $x$. 
		
		Now we estimate $\log \mu_{\delta} \big (Q(\varepsilon,n) \big )$. Similar to Lemma \ref{le12}, 
		\begin{equation}\label{e98}
			\log \mu_{\delta}\big (Q(\varepsilon,n) \big )=\sum_{l=1}^{k^{(n)}} \log q_{\varepsilon_l}^{(l)} +\sum_{l=k^{(n)}+1}^n \log R_{i_{\varepsilon_l}}(\bm{q}^{(l)}).
		\end{equation} 
		Noticing that for all $l,m=1,\cdots,d$ and $i\in \mathcal{J}_1$, $\log q_{m}^{(l)}, \log R_i({\bm{q}^{(l)}})$ are bounded away from $0$ and $-\infty$, combining \eqref{e47}, \eqref{e96} and \eqref{e98}, we have 
		\begin{equation}\label{e48}
			\begin{aligned}
				\log \mu_{\delta}\big( Q(\varepsilon,n) \big) &\leq \sum_{l=1}^{\lfloor n\rho_1 \rfloor} \log q_{\varepsilon_l}^{(l)} +\sum_{l=\lfloor n\rho_1 \rfloor+1}^n \log R_{i_{\varepsilon_l}}(\bm{q}^{(l)}) +O\big( \sqrt{n\log \log n} \big)
				\\
				&\leq  \sum_{l=1}^{\lfloor n\rho_1 \rfloor} QQ(\bm{q}^{(l)}) +\sum_{l=\lfloor n\rho_1 \rfloor+1}^n RR(\bm{q}^{(l)}) +O\big( \sqrt{n\log \log n} \big).
			\end{aligned}
		\end{equation}
		Since for two probability vectors $\bm{p}=(p_1,\cdots,p_d), \bm{v}=(v_1,\cdots,v_d)$ in $\mathbb{R}^d$, for $\epsilon>0$,
		\begin{equation}\label{e49}
			QQ\big( (1-\epsilon) \bm{p}+\epsilon \bm{v} \big)=QQ(\bm{p})+ \epsilon \sum_{l=1}^d (v_l-p_l) \log p_l +O(\epsilon^2),
		\end{equation}
		by \eqref{e43}, we have 
		\begin{equation}\label{e50}
			\begin{aligned}
				QQ(\bm{q}^{(l)}) &= QQ(\bm{q}) + \frac{\delta}{\log l} (\rho_1-1) \Delta +O\left(\frac{\delta^2}{(\log l)^2}\right),
				\\
				RR(\bm{q}^{(l)})&= RR(\bm{q}) +\frac{\delta}{\log l} \rho_1 \Delta +O\left(\frac{\delta^2}{(\log l)^2}\right).
			\end{aligned} 
		\end{equation}
		Note that 
		\begin{equation}\label{e51}
			\begin{aligned}
				(\rho_1-1)\sum_{l=2}^{\lfloor n\rho_1 \rfloor} \frac{1}{\log l}+ \rho_1 \sum_{l=\lfloor n\rho_1 \rfloor+1}^n \frac{1}{\log l} &\leq   \rho_1 \int_2^{n} \frac{1}{\log t}dt- \int_{2}^{n\rho_1} \frac{1}{\log t}dt+O(1) 
				\\
				&\leq   \rho_1 \log \rho_1 \int_{2\rho_1^{-1}}^n \frac{1}{\log t\log(\rho_1 t)}dt +O(1)
				\\
				&\leq   \rho_1 \log \rho_1 \frac{n}{(\log n)^2}+O(1),
			\end{aligned}
		\end{equation}
		and 
		\begin{equation}\label{e85}
			\sum_{l=2}^n \frac{1}{(\log l)^2}= \int_2^n \frac{1}{(\log t)^2} dt+O(1) =O\left( \frac{n}{(\log n)^2} \right).
		\end{equation}
		Combining \eqref{e48}, and \eqref{e50}-\eqref{e85}, we see that 
		\begin{equation}\label{e86}
			\begin{aligned}
				\log \mu_{\delta}\big( Q(\varepsilon,n) \big) &\leq  \lfloor n\rho_1 \rfloor QQ(\bm{q}) +\sum_{l=2}^{\lfloor n\rho_1 \rfloor}\frac{\delta}{\log l}(\rho_1-1) \Delta  
				\\
				& \quad +(n-\lfloor n\rho_1 \rfloor ) RR(\bm{q})+\sum_{l=\lfloor n\rho_1 \rfloor +1}^n \frac{\delta}{\log l} \rho_1 \Delta +O\left( \sum_{l=2}^n \frac{\delta^2}{(\log l)^2} \right) 
				\\
				&\leq \lfloor n\rho_1 \rfloor QQ(\bm{q}) +(n-\lfloor n\rho_1 \rfloor) RR(\bm{q}) +\delta \Delta \rho_1 \log \rho_1 \frac{n}{(\log n)^2} +O\left( \frac{\delta^2n}{(\log n)^2} \right).
			\end{aligned}
		\end{equation}

		On the other hand, noticing that  
		$$
		\dimh K=g_1(\bm{q})=\frac{RR(\bm{q})}{RA(\bm{q})}+\frac{QQ(\bm{q})-RR(\bm{q})}{SB(\bm{q})},
		$$
		by Lemma \ref{le21} and \eqref{e95}, we have 
		\begin{equation}\label{e57}
			\begin{aligned}
				\log \varphi_c(L_{\varepsilon|_n})&=(\dimh K) \log  A_{\varepsilon|_n}-c\frac{|\log A_{\varepsilon|_n}|}{\big(\log( |\log A_{\varepsilon|_n}|)\big)^2}
				\\
				&\geq n RR(\bm{q})+n\rho_1 \big( QQ(\bm{q})-RR(\bm{q}) \big) - c\frac{ n|RA(\bm{q})|}{(\log n)^2} +O \left( \frac{n}{(\log n)^3}\right).
			\end{aligned}
		\end{equation}
		Combining this with \eqref{e86}, we have 
		\begin{equation}\label{e97}
			\begin{aligned}
				&\log \mu_{\delta} \big( Q(\varepsilon,n) \big) -\log \varphi_c(L_{\varepsilon|_n})
				\\
				\leq  &\lfloor n\rho_1 \rfloor QQ(\bm{q}) +(n-\lfloor n\rho_1 \rfloor) RR(\bm{q}) -n RR(\bm{q})-n\rho_1 \big (QQ(\bm{q})-RR(\bm{q}) \big)
				\\
				&+c\frac{ n|RA(\bm{q})|}{(\log n)^2}+\delta \Delta \rho_1 \log \rho_1 \frac{n}{(\log n)^2} +O\left( \frac{\delta^2n}{(\log n)^2} \right) \to-\infty,
			\end{aligned}
		\end{equation}
		as $n$ tends to $+\infty$ when $\delta>0$ is small and $c>0$ is much smaller than $\delta$, noticing that  $\Delta>0$, $\log \rho_1 <0$. 
		
		Let 
		$$
		\Sigma_1'=\{\varepsilon\in \Sigma_1:\lim_{n\to\infty} \frac{\varphi_c(L_{\varepsilon|_n})}{\mu_{\delta}\big(Q(\varepsilon,n)\big)}=+\infty\}.
		$$
		Then by \eqref{e97}, $\nu_{\delta}(\Sigma_1')=1$.
		Using Proposition \ref{prop4}, $\mathcal{H}^{\varphi_c}(K_1)\geq \mathcal{H}^{\varphi_c}\big(\Pi(\Sigma_1') \big)=+\infty$.
		
		\vspace{0.2cm}
		\textit{\textbf{Case 2.}} \textit{When $\text{rank}(\bm{\alpha},\bm{\beta},\bm{e},\bm{\gamma})=\text{rank}(\bm{\alpha},\bm{\beta},\bm{e})$, and for any $a,b,e$ in $\mathbb{R}$ satisfying   $\bm{\gamma}=a \bm{\alpha} +b \bm{\beta}+e \bm{e}$, $e\neq 0$.}
		\vspace{0.2cm}
		
		In this case, $\text{rank}(\bm{\alpha},\bm{\beta},\bm{e}) =\text{rank}(\bm{\alpha},\bm{\beta})+1$. Without loss of generality, assume that $I_1\neq \emptyset$ and $I_1=\{1,\cdots,v\}$ for some $v< d$. Let 
		$$
		\Lambda=I_1\times \{1,\cdots, d\} \cup \{v+1,\cdots, d\}\subseteq \Sigma^*.
		$$
		Consider the IFS $\{\psi_{\varepsilon}:\varepsilon\in \Lambda\}$. Write 
		$$
		\begin{aligned}
			\bm{\alpha}_{\Lambda}&=(\log a_{i_1}+\log a_{i_1},\cdots,\log a_{i_v}+\log a_{i_d},\log a_{i_{v+1}}, \cdots, \log a_{i_d}),\\
			\bm{\beta}_{\Lambda}&=(\log b_{j_1}+\log b_{j_1},\cdots, \log b_{j_v}+\log b_{j_d}, \log b_{j_{v+1}},\cdots,\log b_{j_d}),\\
			\bm{\gamma}_{\Lambda}&=(\log\gamma_1+\log \gamma_1,\cdots, \log \gamma_v+\log \gamma_d,\log \gamma_{v+1},\cdots,\log \gamma_d),\\
			\bm{q}_{\Lambda}&=(q_1q_1,\cdots, q_v q_d, q_{v+1},\cdots, q_d),
		\end{aligned}
		$$
		and $\bm{e}_{\Lambda}=(\overbrace{2,\cdots,2}^{vd},\overbrace{1,\cdots,1}^{d-v}),\tilde{\bm{e}}=(1,\cdots,1)$ in $\mathbb{R}^{vd+d-v}$. Let $a,b,e\in \mathbb{R}$ satisfy $\bm{\gamma}=a \bm{\alpha} +b \bm{\beta}+e \bm{e}$ $(e\neq 0)$. It is easy to check that $\bm{\gamma}_{\Lambda}=a \bm{\alpha}_{\Lambda}+b \bm{\beta}_{\Lambda}+ e \bm{e}_{\Lambda}.$ Taking elementary column transformations of vectors $\bm{\alpha}_{\Lambda}, \bm{\beta}_{\Lambda},\bm{e}_{\Lambda}, \tilde{\bm{e}}$ as follows:
		\begin{align*}
			&\begin{pmatrix}
				\log a_{i_1}+\log a_{i_1} & \cdots &\log a_{i_1}+\log a_{i_{d}} &\cdots &\log a_{i_{v+1}}&\cdots &\log a_{i_d}\\
				\log b_{j_1}+\log b_{j_1} & \cdots &\log b_{j_1}+\log b_{j_{d}}&\cdots &\log b_{j_{v+1}} &\cdots &\log b_{j_d}\\
				2&\cdots & 2 & \cdots &1 &\cdots &1 \\
				1&\cdots & 1 & \cdots &1 &\cdots &1\\
			\end{pmatrix}
			\\
			\Longrightarrow & 
			\begin{pmatrix}
				\log a_{i_1}+\log a_{i_1} & \cdots &\log a_{i_1} &\cdots &\log a_{i_{v+1}}&\cdots &\log a_{i_d}\\
				\log b_{j_1}+\log b_{j_1} & \cdots &\log b_{j_1}&\cdots &\log b_{j_{v+1}} &\cdots &\log b_{j_d}\\
				2&\cdots & 1 & \cdots &1 &\cdots &1 \\
				1&\cdots & 0 & \cdots &1 &\cdots &1\\
			\end{pmatrix}
			\\
			\Longrightarrow & 
			\begin{pmatrix}
				0 & \cdots &\log a_{i_1} &\cdots &\log a_{i_{v+1}}&\cdots &\log a_{i_d}\\
				0 & \cdots &\log b_{j_1}&\cdots &\log b_{j_{v+1}} &\cdots &\log b_{j_d}\\
				0&\cdots & 1 & \cdots &1 &\cdots &1 \\
				1&\cdots & 0 & \cdots &0 &\cdots &0\\
			\end{pmatrix},
		\end{align*}
		we see that  $\text{rank}(\bm{\alpha}_{\Lambda}, \bm{\beta}_{\Lambda},\bm{e}_{\Lambda}, \tilde{\bm{e}})=\text{rank}(\bm{\alpha}, \bm{\beta},\bm{e})+1=\text{rand}(\bm{\alpha}, \bm{\beta})+2$, and $\text{rank}(\bm{\alpha}_{\Lambda}, \bm{\beta}_{\Lambda}, \tilde{\bm{e}})=\text{rank}(\bm{\alpha}, \bm{\beta})+1$, which implies that 
		\begin{equation}\label{e52}
			\text{rank}(\bm{\alpha}_{\Lambda}, \bm{\beta}_{\Lambda}, \tilde{\bm{e}},\bm{\gamma}_{\Lambda})= \text{rank}(\bm{\alpha}_{\Lambda}, \bm{\beta}_{\Lambda}, \tilde{\bm{e}})+1,
		\end{equation}
		noticing that $\text{rank}(\bm{\alpha}_{\Lambda}, \bm{\beta}_{\Lambda}, \tilde{\bm{e}},\bm{\gamma}_{\Lambda})=\text{rank}(\bm{\alpha}_{\Lambda}, \bm{\beta}_{\Lambda}, \bm{e}_{\Lambda}, \tilde{\bm{e}})$.
		
		Write $RA(\bm{q}_{\Lambda})=\langle \bm{q}_{\Lambda}, \bm{\alpha}_{\Lambda} \rangle$ and $SB(\bm{q}_{\Lambda})=\langle \bm{q}_{\Lambda}, \bm{\beta}_{\Lambda} \rangle$, then we have 
		$$
		RA(\bm{q}_{\Lambda})=\big(1+R_1(\bm{q})\big)RA(\bm{q}), \quad 
		SB(\bm{q}_{\Lambda})=\big(1+R_1(\bm{q})\big)SB(\bm{q}).
		$$
		Similarly, write 
		$$
		\begin{aligned}
			QQ(\bm{q}_{\Lambda})&=\sum_{l=1}^v \sum_{m=1}^d q_l q_m \log (q_l q_m)+\sum_{l=v+1}^d q_l\log q_l,
			\\
			RR(\bm{q}_{\Lambda})&=\sum_{i\in \mathcal{J}_1} R_1(\bm{q}) R_i(\bm{q}) \log \big(R_1(\bm{q}) R_i(\bm{q})\big) +\sum_{i\in \mathcal{J}_1:i\neq 1} R_i(\bm{q}) \log  R_i(\bm{q}).
		\end{aligned}
		$$
		We also have $QQ(\bm{q}_{\Lambda})=\big(1+R_1(\bm{q}) \big) QQ(\bm{q})$ and $RR(\bm{q}_{\Lambda})=\big(1+R_1(\bm{q}) \big) RR(\bm{q})$. Thus 
		\begin{equation}\label{e107}
			\dimh K=\frac{RR(\bm{q}_{\Lambda})}{RA(\bm{q}_{\Lambda})}+\frac{QQ(\bm{q}_{\Lambda})-RR(\bm{q}_{\Lambda})}{SB(\bm{q}_{\Lambda})}.
		\end{equation}

		Define
		$$ 
		\Sigma_{\Lambda,1}=\left\{\varepsilon\in {\Lambda}^\mathbb{N}:Q_{\Lambda}(\varepsilon,n) \text{ is of $1$-type for infinitely many } n\in \mathbb{N}  \right\},
		$$
		where $Q_{\Lambda}(\varepsilon,n)$'s are $1$-type approximate squares according to the IFS $\{\psi_\varepsilon:\varepsilon\in \Lambda\}$ (Recall Remark \ref{re1} these squares are well-defined, although the new IFS is not a \bara~IFS). Let $\iota$ be the identity mapping from $\Lambda^{\mathbb{N}}$ to $\Sigma^{\mathbb{N}}$. Observe that for a $1$-type approximate square $Q_{\Lambda}(\varepsilon,n)$, there is $k\in \mathbb{N}$ such that 
		\begin{equation}\label{e111}
			Q\big(\iota(\varepsilon),k\big) \subseteq Q_{\Lambda}(\varepsilon,n), \text{ and } Q\big(\iota(\varepsilon),k\big) \text{ has the same width}.
		\end{equation}  
		Since $Q_{\Lambda}(\varepsilon,n)$ is of $1$-type, by Lemma \ref{le21}, the width of rectangle $\psi_{\varepsilon|_n}([0,1]^2)$ is larger than its height. Noticing that $\varepsilon|_n=\iota(\varepsilon)|_k$, again using Lemma \ref{le21}, we see that $Q\big(\iota(\varepsilon),k\big)$ is also of $1$-type. Thus $\iota(\Sigma_{\Lambda,1})\subseteq \Sigma_1$. 
		
		Let $\Pi_{\Lambda}=\Pi\circ \iota$ be the projection from $\Lambda^{\mathbb{N}}$ to $K$. Using the same argument in Case $1$, noticing that  \eqref{e52}, \eqref{e107} and $RA(\bm{q}_{\Lambda})>SB(\bm{q}_{\Lambda})$, there is a probability measure $\nu_{\Lambda,\delta}$ on $\Lambda^{\mathbb{N}}$ such that $\nu_{\Lambda,\delta}(\Sigma_{\Lambda,1})=1$ and for $\nu_{\Lambda,\delta}$ a.e. $\varepsilon\in \Sigma_{\Lambda,1}$, 
		$$
		\lim_{n\to +\infty}\log \mu_{\Lambda,\delta} \big( Q_\Lambda(\varepsilon,n) \big) -\log \varphi_c(L_{\Lambda,\varepsilon|_n})=-\infty,
		$$
		where $\mu_{\Lambda,\delta}=\nu_{\Lambda,\delta} \circ \Pi_{\Lambda}^{-1}$, and $L_{\Lambda,\varepsilon|_n}$ is the maximum of the width and height of rectangle $\psi_{\varepsilon|_n}([0,1]^2)$.
		Define 
		$$
		\Sigma_{\Lambda,1}'=\{\varepsilon\in \Sigma_{\Lambda,1}:\lim_{n\to\infty} \frac{\varphi_c(L_{\Lambda,\varepsilon|_n})}{\mu_{\Lambda,\delta}\big(Q_{\Lambda}(\varepsilon,n)\big)}=+\infty\}.
		$$
		Then, $\nu_{\Lambda,\delta}(\Sigma_{\Lambda,1}')=1$. Write $\mu_{\delta}:=\mu_{\Lambda,\delta},$ and by \eqref{e111}, we have
		$$
		\iota(\Sigma_{\Lambda,1}') \subseteq \{\varepsilon\in \Sigma_{1}:\lim_{k\to\infty} \frac{\varphi_c(L_{\varepsilon|_k})}{\mu_{\delta}\big(Q(\varepsilon,k)\big)}=+\infty \}:=\Sigma_1'.
		$$
		Using Proposition \ref{prop4}, we have $\mathcal{H}^{\varphi_c}(K_1)\geq \mathcal{H}^{\varphi_c}\big(\Pi(\Sigma_1') \big)=+\infty$.
		\vspace{0.2cm}
		
		\textit{\textbf{Case 3.}} \textit{When there are $a,b$ in $\mathbb{R}$ such that $\bm{\gamma}=a \bm{\alpha} +b \bm{\beta}$.}
		\vspace{0.2cm}

		Since $\langle \bm{q}, \bm{\gamma} \rangle=0$ and $\bm{\gamma}$ is not zero, $\text{rank}(\bm{\alpha},\bm{\beta})=2$ (otherwise $\langle \bm{q}, \bm{\gamma} \rangle =a' RA(\bm{q})\neq 0$ for some $a'\neq 0$), and $a\rho_1 +b=0$. This also gives $a,b\neq 0$.
		
		It suffices to assume that 
		\begin{equation}\label{e56}
			\text{rank}(\bm{\alpha},\bm{e},\bm{\beta})=\text{rank}(\bm{\alpha},\bm{e})+1.
		\end{equation} 
		In fact, otherwise we will have 
		$$
		2\geq \text{rank}(\bm{\alpha},\bm{e})=\text{rank}(\bm{\alpha},\bm{e},\bm{\beta})\geq \text{rank}(\bm{\alpha},\bm{\beta})=2.
		$$ 
		Then instead to consider the same  IFS $\{\psi_{\varepsilon}:\varepsilon\in \Lambda\}$ as shown in Case $2$, taking elementary column transformations, we have 
		$$
		\begin{aligned}
			\text{rank}(\bm{\alpha}_{\Lambda},\bm{e}_{\Lambda}, \tilde{\bm{e}})=\text{rank}(\bm{\alpha},\bm{e})+1=3,
			\quad
			\text{rank}(\bm{\alpha}_{\Lambda}, \tilde{\bm{e}})=\text{rank}(\bm{\alpha})+1=2.
		\end{aligned}
		$$
		Noticing that there are $a',e'$ in $\mathbb{R}$ such that $\bm{\beta}=a' \bm{\alpha}+ e'\bm{e}.$ Thus $\bm{\beta}_{\Lambda}=a'\bm{\alpha}_{\Lambda}+ e'\bm{e}_{\Lambda}$. Since $\text{rank}(\bm{\alpha},\bm{\beta})=2$, $e'\neq 0$. Then  
		$$
		\text{rank}(\bm{\alpha}_{\Lambda}, \tilde{\bm{e}} ,\bm{\beta}_{\Lambda})=
		\text{rank}(\bm{\alpha}_{\Lambda},\bm{e}_{\Lambda}, \tilde{\bm{e}} ) =\text{rank} (\bm{\alpha}_{\Lambda}, \tilde{\bm{e}})+1,
		$$
		a formula similar to \eqref{e56}.  
		\vspace{0.2cm}
		
		Choose a vector $\bm{u}=(u_1,\cdots, u_d)$ in $\mathbb{R}^d$ satisfying 
		\begin{equation}\label{e53}
			\langle \bm{u},\bm{\alpha} \rangle =0,\quad  \langle \bm{u},\bm{e} \rangle =0,\quad \langle \bm{u}, \bm{\beta} \rangle \cdot b > 0.
		\end{equation}
		Write $\Delta:=\langle \bm{u}, \bm{\beta} \rangle$. For small $\delta>0$, define probability measures $\nu_\delta, \mu_{\delta}$ in the same way as described in Case 1 by replacing the new $\bm{u}$. Let $\varepsilon \in \Sigma^{\mathbb{N}}$ satisfy \eqref{e47} with respect to $\nu_{\delta}$. 
		Thus, by \eqref{e53}, equation \eqref{e55} becomes
		\begin{equation}\label{e59}
			\begin{aligned}
				RA(\bm{q}^{(l)})&=\langle \bm{q}^{(l)}, \bm{\alpha} \rangle=\langle \bm{q}+\frac{\delta}{\log l}\bm{u}, \bm{\alpha}\rangle =RA(\bm{q}),
				\\
				SB(\bm{q}^{(l)})&=\langle \bm{q}^{(l)}, \bm{\beta} \rangle=\langle \bm{q}+\frac{\delta}{\log l}\bm{u}, \bm{\beta}\rangle =SB(\bm{q})+ \frac{\delta}{\log l} \Delta.
			\end{aligned}
		\end{equation}
		For sufficiently large $n$, write $k_1(n)=k_1(\varepsilon, L_{\varepsilon|_n})$ and $k_2(n)=k_2(\varepsilon, L_{\varepsilon|_n})$ for short. Note that either $k_2(n)\leq k_1(n)=n$, or $k_1(n)\leq k_2(n)=n$. Combining Lemma \ref{le11}, \eqref{e47} and  \eqref{e59}, we have 
		\begin{equation}\label{e60}
			\Big|k_1(n) RA(\bm{q})-k_2(n) SB(\bm{q}) -\sum_{l=2}^{k_2(n)} \frac{\delta}{\log l} \Delta \Big|=O \big( \sqrt{n\log \log n} \big).
		\end{equation}
		Then $$
		k_1(n)\geq \frac{k_2(n)}{\rho_1}-k_2(n)\frac{\delta| \Delta|}{|RA(\bm{q})|\log 2} + O \big( \sqrt{n\log \log n} \big).
		$$
		Since $RA(\bm{q})>SB(\bm{q})$, $0<\rho_1<1$, we see that $n=k_1(n)\geq k_2(n)$ holds for all large $n$ since $\delta$ is small. So $\nu_{\delta}(\Sigma_1)=1$.

		Still fix the above $\varepsilon$. Combining \eqref{e58} and \eqref{e60}, we have the fact that  
		\begin{equation}\label{e87}
			\begin{aligned}
				&\Big|(k_2(n) -n \rho_1)\big( QQ(\bm{q})-RR(\bm{q}) \big) +\sum_{l=2}^{k_2(n)}\frac{\delta}{\log l}\lambda_1\Delta \Big|
				\\
				=&\Big|\big(k_2(n) -k_1(n) \rho_1\big)\big( QQ(\bm{q})-RR(\bm{q}) \big)+ \sum_{l=2}^{k_2(n)}\frac{\delta}{\log l}\Delta \frac{\big( QQ(\bm{q})-RR(\bm{q}) \big)}{SB(\bm{q})} \Big|= O \big( \sqrt{n\log \log n} \big).
			\end{aligned}
		\end{equation}
		Note that by \eqref{e49} and \eqref{e53}, \eqref{e50} becomes
		\begin{equation}
			\begin{aligned}\nonumber
				QQ(\bm{q}^{(l)}) &= QQ(\bm{q}) + \frac{\delta}{\log l} \big(\lambda_1+b(\rho_1-1)\big)\Delta +O\left(\frac{\delta^2}{(\log l)^2}\right),
				\\
				RR(\bm{q}^{(l)})&= RR(\bm{q}) +\frac{\delta}{\log l} b\rho_1 \Delta +O\left(\frac{\delta^2}{(\log l)^2}\right).
			\end{aligned}
		\end{equation}
		Combining this with \eqref{e47}, \eqref{e98} and \eqref{e87}, we have 
		\begin{equation}\nonumber
			\begin{aligned}
				&\log \mu_{\delta}\big( Q(\varepsilon,n) \big)-nRR(\bm{q}) -n \rho_1\big(QQ(\bm{q})-RR(\bm{q}) \big)
				\\  
				=&\sum_{l=1}^{k_2(n)} QQ(\bm{q}^{(l)}) +\sum_{l=k_2(n)+1}^n RR(\bm{q}^{(l)})-nRR(\bm{q}) -n \rho_1\big(QQ(\bm{q})-RR(\bm{q}) \big)+O \big( \sqrt{n\log \log n} \big)
				\\
				=&\sum_{l=2}^{k_2(n)} \frac{\delta}{\log l}b(\rho_1-1) \Delta+\sum_{l=k_2(n)+1}^n \frac{\delta}{\log l}b\rho_1 \Delta +O\left( \frac{\delta^2 n}{(\log n)^2}\right).
			\end{aligned}
		\end{equation}
		Therefore by \eqref{e57} and \eqref{e59},
		\begin{equation}\label{e88}
			\begin{aligned}
				&\log \mu_{\delta}\big( Q(\varepsilon,n) \big) -\log \varphi(L_{\varepsilon|_n})
				\\
				\leq  &\sum_{l=2}^{k_2(n)} \frac{\delta}{\log l}b(\rho_1-1) \Delta+\sum_{l=k_2(n)+1}^n \frac{\delta}{\log l}b\rho_1 \Delta +c|RA(\bm{q})|\frac{ n}{(\log n)^2} +O\left( \frac{\delta^2 n}{(\log n)^2}\right).
			\end{aligned}
		\end{equation}
		Note that by \eqref{e60},
		\begin{equation}\label{e99}
			\Big| \rho_1 -\frac{k_2(n)}{n} -\frac{\delta \Delta}{n SB(\bm{q})} \sum_{l=2}^{k_2(n)} \frac{1}{\log l } \Big| = O \left( \frac{1}{n}\sqrt{n\log \log n} \right).
		\end{equation}
		Combining this with $\sum_{l=2}^{n} \frac{1}{\log l}= O\left( \frac{n}{\log n} \right)$ and $k_2(n)\leq n$,
		we have 
		$\frac{k_2(n)}{n}\to \rho_1<1$ as $n$ tends to infinity. So 
		\begin{equation}\label{e89}
			\begin{aligned}
				&\sum_{l=2}^{k_2(n)} \frac{\delta}{\log l}b(\rho_1-1) \Delta+\sum_{l=k_2(n)+1}^n \frac{\delta}{\log l}b\rho_1 \Delta 
				\\
				\leq& b\delta \Delta \left( \rho_1 \int_{2}^{n} \frac{1}{\log t}dt-\int_{2}^{k_2(n)}\frac{1}{\log t}dt \right) +O(1)
				\\
				\leq& b\delta \Delta \left( \rho_1 \int_{2}^{n} \frac{1}{\log t}dt-\int_{2}^{n}\frac{k_2(n)}{ n\log (\frac{k_2(n)}{n}t)}dt \right) +O(1)
				\\
				\leq& b\delta\Delta \rho_1 \log \left(\frac{k_2(n)}{n}\right) \frac{n}{(\log n)^2}+ \Big|b\delta\Delta \left(\rho_1-\frac{k_2(n)}{n}\right)\int_{2}^n \frac{1}{\log \left(\frac{k_2(n) t}{n} \right)}dt \Big| +O(1) .
			\end{aligned}
		\end{equation}
		Again using \eqref{e99}, we have 
		$$
		\begin{aligned}
			&\Big|b\delta\Delta \left(\rho_1-\frac{k_2(n)}{n}\right)\int_{2}^n \frac{1}{\log \left(\frac{k_2(n) t}{n} \right)}dt \Big| 
			\\
			=& \big|b\delta\Delta\big|\cdot \Big| \rho_1-\frac{k_2(n)}{n} \Big| \cdot \Big| \int_{2}^n \frac{1}{\log \left(\frac{k_2(n) t}{n} \right)}dt \Big| 
			= O \left(\frac{\delta^2 n}{(\log n)^2} \right). 
		\end{aligned}
		$$
		Therefore, combining this with \eqref{e88} and \eqref{e89},  we have 
		$$
		\begin{aligned}
			&\log \mu_{\delta}\big( Q(\varepsilon,n) \big)-\log \varphi(L_{\varepsilon|_n})
			\\
			\leq &b\delta\Delta \rho_1 \log \left(\frac{k_2(n)}{n}\right) \frac{n}{(\log n)^2} + c|RA(\bm{q})|\frac{ n}{(\log n)^2} +O\left( \frac{\delta^2 n}{(\log n)^2}\right).
		\end{aligned}
		$$
		Picking $\delta,c $ small enough as in Case 1, using  $b\Delta>0$, $\log \left(\frac{k_2(n)}{n}\right) \to \log \rho_1 <0$, we have 
		$$
		\log \mu_{\delta}\big( Q(\varepsilon,n) \big)-\log \varphi(L_{\varepsilon|_n})\to -\infty, \quad \text{ as }n\to +\infty.
		$$
		This immediately gives $\mathcal{H}^{\varphi_c}(K_1)=+\infty$ as in Case $1$.
		
	\end{proof}

	\begin{proposition}\label{le22}
		Let $K$ be a \bara~carpet. If the condition \ref{H} does not hold, then $$
		\mathcal{H}^{\varphi_c}(K)=+\infty
		$$
		holds for some small $c>0$,
		where $\varphi_c$ is defined in \eqref{e90}.
	\end{proposition}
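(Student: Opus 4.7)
The plan is to reduce the statement to Lemma \ref{le18} by first translating \ref{H} into its equivalent form \ref{H'} via Proposition \ref{le15}, and then splitting into cases according to how \ref{H'} fails. Recall from Lemma \ref{le7} that $\dim_{\textup{H}} K = \max\{G_1, G_2\}$, which pins down whether $G_1$ or $G_2$ realises the Hausdorff dimension in each branch.

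The failure of \ref{H'} splits naturally into three cases: (i) $G_1 > G_2$ together with $K$ lacking uniform vertical fibres; (ii) $G_1 < G_2$ together with $K$ lacking uniform horizontal fibres; or (iii) $G_1 = G_2$ together with $K$ lacking at least one of the two uniform fibre conditions. In case (i), $\dim_{\textup{H}} K = G_1$, and Lemma \ref{le18}\eqref{e61} immediately yields $\mathcal{H}^{\varphi_c}(K_1) = +\infty$ for some small $c>0$. In case (ii), the symmetric application of Lemma \ref{le18}\eqref{e62} gives $\mathcal{H}^{\varphi_c}(K_2) = +\infty$. In case (iii), $\dim_{\textup{H}} K = G_1 = G_2$, so whichever of the two uniform fibre properties fails for $K$, the corresponding subcase of Lemma \ref{le18} still applies. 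In every scenario, monotonicity of $\mathcal{H}^{\varphi_c}$ together with $K \supseteq K_i$ promotes the conclusion from $K_i$ to $K$.

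The substantive work has already been absorbed by Lemma \ref{le18}, whose proof constructs the delicate perturbed Bernoulli measures $\mu_\delta$ and exploits the law of the iterated logarithm; the present proposition is essentially a bookkeeping step that combines that lemma with the equivalence \ref{H} $\iff$ \ref{H'}. I do not anticipate any genuinely new obstacle; the only point requiring care is checking that each failure mode of \ref{H'} lines up with the hypothesis ``$\dim_{\textup{H}} K = G_i$ and $K$ lacks the corresponding uniform fibres'' of the appropriate subcase of Lemma \ref{le18}, which is immediate from $\dim_{\textup{H}} K = \max\{G_1, G_2\}$.
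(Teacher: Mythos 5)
Your proposal is correct and follows essentially the same route as the paper: invoke Proposition \ref{le15} to pass from \ref{H} to \ref{H'}, observe via Lemma \ref{le7} that the failure of \ref{H'} yields either $G_1\geq G_2=\,$case with no uniform vertical fibres or $G_2\geq G_1$ with no uniform horizontal fibres, apply the corresponding part of Lemma \ref{le18} to get $\mathcal{H}^{\varphi_c}(K_i)=+\infty$, and conclude by monotonicity since $K_i\subseteq K$. Your three-way case split (including $G_1=G_2$ separately) is just a finer phrasing of the paper's two-case statement with non-strict inequalities.
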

	\begin{proof}
		Using Proposition \ref{le15}, since the condition \ref{H} does not hold, we see that 
		$$
		\begin{aligned}
			\text{either }&\quad  G_1\geq G_2 \text{ and } K \text{ does not have uniform vertical fibres},
			\\
			\text{or }& \quad  G_2\geq G_1  \text{ and } K \text{ does not have uniform horizontal fibres}.
		\end{aligned}
		$$
		Combining this with Lemmas \ref{le7} and \ref{le18}, we have 
		$$
		\text{either }\quad \mathcal{H}^{\varphi_c}(K)\geq \mathcal{H}^{\varphi_c}(K_1)=+\infty, \quad \text{ or }\quad \mathcal{H}^{\varphi_c}(K)\geq \mathcal{H}^{\varphi_c}(K_2)=+\infty.
		$$
		The proposition follows.
	\end{proof}
	\begin{proof}[Proof of ``$\Longrightarrow$'' in Theorem \ref{th1}-\eqref{e41}]
		This follows by combining the fact that  $\varphi_c(t)\leq t^{\dimh K}$ for all $t$, and Proposition \ref{le22}.
	\end{proof}

	\section{Proof of Theorem \ref{th1}-\eqref{e42}  }\label{sec6}
	In this section, we prove that 
	$
	\ref{B} \iff \dimh K=\dimb K.
	$

	First of all, we recall a result from Feng and Wang \cite{FW05}, which reformulates the box dimension of a \bara~carpet as the maximum of a function $f$ defined on $\mathcal{S}$.

	\begin{proposition}(\cite[Theorem 1]{FW05}) \label{prop1}
		For a \bara~carpet $K$,  $\dimb K=\max_{\bm{q}\in \mathcal{S}} f(\bm{q})$, where 
		$$
		f(\bm{q})=\left \{\begin{aligned}
			&\frac{\sum_{l=1}^d q_l \log q_l -t_1\big(\sum_{i=1}^r R_i(\bm{q})\log a_i- \sum_{j=1}^s S_j(\bm{q})\log b_j \big)}{\sum_{j=1}^s S_j(\bm{q})\log b_j},\quad & & \text{ if } \bm{q}\in \mathcal{S}_1, \\
			&\frac{\sum_{l=1}^d q_l \log q_l -t_2 \big(\sum_{j=1}^s S_j(\bm{q})\log b_j-\sum_{i=1}^r R_i(\bm{q})\log a_i\big )}{\sum_{i=1}^r R_i(\bm{q})\log a_i}, & &\text{ if } \bm{q} \in \mathcal{S}_2 \setminus \mathcal{S}_1,
		\end{aligned} \right.
		$$
		and $t_1 =\dimb \pi_1(K)$, $t_2=\dimb \pi_2(K)$.
	\end{proposition}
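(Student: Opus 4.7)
The plan is to estimate the covering number $N_\delta(K)$ of $K$ at each scale $\delta>0$ using the approximate-square machinery from Section \ref{sec3.2}, then show that the count has precisely the piecewise variational form of $\max_{\bm{q}\in \mathcal{S}} f(\bm{q})$. Since by Lemma \ref{le11} every ball of radius $\delta$ meets at most $C$ approximate squares of size $\delta$, $N_\delta(K)$ is comparable to the total number of distinct approximate squares at scale $\delta$. I would split these into $1$-type and $2$-type squares and treat each class separately.

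For the upper bound in the $1$-type case, each square $Q(\varepsilon,\delta)$ with $k_1(\varepsilon,\delta)\geq k_2(\varepsilon,\delta)$ is determined by the cylinder $\psi_{\varepsilon|_{k_2}}([0,1]^2)$ (with $B_{\varepsilon|_{k_2}}\asymp \delta$) together with a choice of column-index refinement $i_{\varepsilon_{k_2+1}},\ldots,i_{\varepsilon_{k_1}}$, whose product of widths is comparable to $\delta/A_{\varepsilon|_{k_2}}$. Since $\dimb\pi_1(K)=t_1$, the number of such refinements inside a fixed $k_2$-cylinder is of order $(A_{\varepsilon|_{k_2}}/\delta)^{t_1}$. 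Grouping words $\varepsilon|_{k_2}$ by their Bernoulli type $\bm{q}\in\mathcal{S}$ and applying a Stirling / large-deviations count for the number of length-$n$ words of type $\bm{q}$, and substituting the stopping length $n\asymp|\log\delta|/|SB(\bm{q})|$ imposed by $B_{\varepsilon|_n}\asymp\delta$, I get
\begin{equation*}
\log N_\delta^{(1)}\;\lesssim\;|\log\delta|\cdot \max_{\bm{q}\in\mathcal{S}_1}\frac{QQ(\bm{q})-t_1\bigl(RA(\bm{q})-SB(\bm{q})\bigr)}{SB(\bm{q})},
\end{equation*}
which is exactly $|\log\delta|\cdot\max_{\bm{q}\in\mathcal{S}_1} f(\bm{q})$. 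The restriction to $\mathcal{S}_1$ is automatic: only words for which $A$ shrinks at least as fast as $B$ (i.e.\ $RA(\bm{q})\geq SB(\bm{q})$) produce $1$-type squares. An entirely symmetric argument using $t_2=\dimb\pi_2(K)$ handles the $2$-type squares and produces the branch of $f$ over $\mathcal{S}_2$. The lower bound is then obtained by fixing a maximizer $\bm{q}^{*}$ (say in $\mathcal{S}_1$), pushing forward the Bernoulli measure $\nu_{\bm{q}^{*}}$ to $\mu_{\bm{q}^{*}}$ via $\Pi$, and applying Lemma \ref{le17} together with the law of large numbers to conclude that a positive proportion of $\nu_{\bm{q}^{*}}$-mass sits on disjoint approximate squares contributing $\asymp \delta^{f(\bm{q}^{*})}$ each; a mass-distribution argument then gives $N_\delta(K)\gtrsim\delta^{-f(\bm{q}^{*})}$.

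The main obstacle is executing the Stirling / large-deviations count cleanly when the effective word length $n$ depends on $\bm q$, and when the column-refinement factor must be inserted through $t_1$ (respectively $t_2$) rather than through a naive two-dimensional entropy; this is where the formula for $f$ acquires the extra term $-t_1(RA(\bm q)-SB(\bm q))$ not present in $g$. One must also verify that the $1$-type/$2$-type dichotomy cleanly splits the two branches of $f$ without double-counting on the overlap $\mathcal{S}_1\cap\mathcal{S}_2$, where both formulas agree because $RA(\bm{q})=SB(\bm{q})$ collapses the extra correction term and yields $f(\bm{q})=QQ(\bm{q})/SB(\bm{q})$.
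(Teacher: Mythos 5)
First, a point of comparison: the paper itself does not prove this proposition at all --- it is quoted from Feng and Wang \cite[Theorem 1]{FW05} as a simplified special case of their $L^q$-spectrum computation for box-like self-affine carpets --- so any self-contained argument you give is by default a different route. Your upper bound is the standard covering count and is sound in outline: splitting into $1$-type and $2$-type approximate squares, counting column refinements inside a fixed cylinder via $\dimb\pi_1(K)=t_1$, grouping cut words by empirical type, and substituting $n\asymp|\log\delta|/|SB(\bm{q})|$ does produce $\max_{\mathcal{S}_1}f_1$ and $\max_{\mathcal{S}_2}f_2$, with the two branches agreeing on $\mathcal{S}_1\cap\mathcal{S}_2$ as you note. (A small slip: $1$-type squares come from words whose width shrinks \emph{no faster} than the height; your parenthetical condition $RA(\bm{q})\geq SB(\bm{q})$ is the right one, but the verbal description is inverted.)

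The genuine gap is in your lower bound. If you push forward the Bernoulli measure $\nu_{\bm{q}^{*}}$, then by Lemma \ref{le12} a $1$-type approximate square has mass $q_{\varepsilon_1}\cdots q_{\varepsilon_{k_2}}R_{i_{\varepsilon_{k_2+1}}}(\bm{q}^{*})\cdots R_{i_{\varepsilon_{k_1}}}(\bm{q}^{*})$, and for $\nu_{\bm{q}^{*}}$-typical $\varepsilon$ the column segment contributes the entropy term $RR(\bm{q}^{*})$, not $t_1RA(\bm{q}^{*})$; the law of large numbers therefore gives mass $\delta^{g_1(\bm{q}^{*})+o(1)}$, not $\delta^{f_1(\bm{q}^{*})+o(1)}$. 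Since $g_1\leq f_1$ on $\mathrm{int}(\mathcal{S}_1)$ with equality only when $RR(\bm{q})/RA(\bm{q})=t_1$ (Lemma \ref{le4}), and the maximizer $\bm{q}^{(1)}=(a_{i_l}^{t_1}b_{j_l}^{D_1-t_1})_{l=1}^d$ need not satisfy this identity (it does under uniform vertical fibres, as exploited in Lemma \ref{le13}, but not in general), your mass-distribution step only yields $N_\delta\gtrsim\delta^{-g_1(\bm{q}^{*})}$, a Hausdorff-type bound that is strictly too weak whenever $\dimh K<\dimb K$. The repair is to count rather than weigh: the number of length-$n$ cut words with empirical type near $\bm{q}^{*}$ is $e^{-n(QQ(\bm{q}^{*})+o(1))}$ by Stirling, and inside each such cylinder the number of \emph{distinct} column words over $\mathcal{J}_1$ with width product $\asymp\delta/A_{\varepsilon|_n}$ is $\asymp(A_{\varepsilon|_n}/\delta)^{t_1}$ by the OSC counting for the self-similar set $\pi_1(K)$; distinct pairs give distinct approximate squares, and Lemma \ref{le11} converts this square count into a covering-number lower bound, recovering $f_1(\bm{q}^{*})$. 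Alternatively, and much more cheaply within this paper's framework, the lower bound follows from Bara\'nski's formula $\dimb K=\max\{D_1,D_2\}$ (Proposition \ref{prop2}) combined with Lemma \ref{le3}, whose Lagrange-multiplier proof is independent of the present proposition.
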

	
	Proposition \ref{prop1} is a simplified version of \cite[Theorem 1]{FW05}, which characterizes the $L^q$-spectra of self-affine measures on box-like self-affine carpets.
	\vspace{0.2cm}
	
	Similar to the Hausdorff dimension case \eqref{e6}, we  define two functions  $f_1,f_2:\mathcal{S}\to \mathbb{R}$ by 
	$$
	\begin{aligned}
		f_1(\bm{q}):&=\frac{QQ(\bm{q}) -t_1\big (RA(\bm{q})- SB(\bm{q}) \big )}{SB(\bm{q})},\\
		f_2(\bm{q}):&=\frac{QQ(\bm{q}) -t_2 \big (SB(\bm{q})-RA(\bm{q})\big )}{RA(\bm{q})}.
	\end{aligned}
	$$
	Then 
	$$
	f(\bm{q})=\left \{\begin{aligned}
		&f_1(\bm{q}), \quad & & \text{ if } \bm{q}\in \mathcal{S}_1, \\
		&f_2(\bm{q}), & &\text{ if } \bm{q} \in \mathcal{S}_2 \setminus \mathcal{S}_1.
	\end{aligned} \right.
	$$
	
	So $\dimb K=\max_{\bm{q}\in \mathcal{S}} f(\bm{q})=\max \{ D_1,D_2\}$, recalling Proposition \ref{prop2},
	where $D_1,D_2$ are the unique real numbers such that 
	\begin{equation}\nonumber
		\sum_{l=1}^d a_{i_l}^{t_1} b_{j_l}^{D_1-t_1}=1, \quad\quad \sum_{l=1}^d b_{j_l}^{t_2} a_{i_l}^{D_2-t_2}=1.
	\end{equation}

	\begin{lemma}\label{le3} 
		Let $\bm{q}^{(1)}=(a_{i_l}^{t_1} b_{j_l}^{D_1-t_1})_{l=1}^d$, $\bm{q}^{(2)}=(b_{j_l}^{t_2} a_{i_l}^{D_2-t_2})_{l=1}^d$ be two probability vectors.
		\begin{enumerate}[(a)]
			\item \label{e30}  $\bm{q}^{(1)}$ and $\bm{q}^{(2)}$ are maximum points of functions $f_1$ and $f_2$ in $\mathcal{S}$, respectively, and  
			$$\begin{aligned}
				&\max_{\bm{q}\in \mathcal{S}}f_1(\bm{q})=f_1(\bm{q}^{(1)})=D_1,\\    &\max_{\bm{q}\in \mathcal{S}}f_2(\bm{q})=f_2(\bm{q}^{(2)})=D_2,
			\end{aligned}$$
			\item \label{e31}
			when $\mathcal{J}_1\times \mathcal{J}_2\neq \mathcal{J}$, if $D_1\geq D_2$, we have $\bm{q}^{(1)} \in \emph{int}(\mathcal{S}_1)$; if $D_2\geq	 D_1$, we have $\bm{q}^{(2)}\in \emph{int}(\mathcal{S}_2)$.
		\end{enumerate}
	\end{lemma}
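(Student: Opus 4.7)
For part (a), the plan is to reduce the inequality $f_1(\bm{q}) \leq D_1$ to the classical Gibbs inequality. Since $\sum_l a_{i_l}^{t_1}b_{j_l}^{D_1-t_1} = 1$ by \eqref{e10}, the vector $\bm{q}^{(1)} = (a_{i_l}^{t_1}b_{j_l}^{D_1-t_1})_{l=1}^d$ is genuinely a probability vector. A direct substitution yields $QQ(\bm{q}^{(1)}) = t_1\, RA(\bm{q}^{(1)}) + (D_1-t_1)\,SB(\bm{q}^{(1)})$, and plugging this into the formula for $f_1$ immediately gives $f_1(\bm{q}^{(1)}) = D_1$. For an arbitrary $\bm{q} \in \mathcal{S}$, the inequality $\sum_l q_l \log q_l \geq \sum_l q_l \log q^{(1)}_l$ (Gibbs inequality, with equality if and only if $\bm{q} = \bm{q}^{(1)}$) rearranges to $QQ(\bm{q}) - t_1\bigl(RA(\bm{q})-SB(\bm{q})\bigr) \geq D_1 \cdot SB(\bm{q})$; dividing by $SB(\bm{q}) < 0$ flips the sign and delivers $f_1(\bm{q}) \leq D_1$. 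The uniqueness of the maximizer is an important bonus that will feed into part (b). The argument for $f_2$ and $\bm{q}^{(2)}$ is symmetric.

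For part (b), assume $D_1 \geq D_2$ (the other case is analogous). Since the components of $\bm{q}^{(1)}$ are strictly positive, it suffices to prove the strict inequality $RA(\bm{q}^{(1)}) > SB(\bm{q}^{(1)})$. The key is the algebraic identity
\[
f_1(\bm{q}) - f_2(\bm{q}) = \frac{\bigl(RA(\bm{q}) - SB(\bm{q})\bigr)\bigl(QQ(\bm{q}) - t_1\, RA(\bm{q}) - t_2\, SB(\bm{q})\bigr)}{SB(\bm{q}) \cdot RA(\bm{q})},
\]
obtained by combining the two fractions over the common denominator. Evaluating at $\bm{q}^{(1)}$ and using $QQ(\bm{q}^{(1)}) = t_1\, RA(\bm{q}^{(1)}) + (D_1-t_1)\,SB(\bm{q}^{(1)})$ simplifies the second factor in the numerator to $(D_1 - t_1 - t_2)\,SB(\bm{q}^{(1)})$, hence
\[
f_1(\bm{q}^{(1)}) - f_2(\bm{q}^{(1)}) = \frac{\bigl(RA(\bm{q}^{(1)}) - SB(\bm{q}^{(1)})\bigr)(D_1 - t_1 - t_2)}{RA(\bm{q}^{(1)})}.
\]
From part (a), $f_1(\bm{q}^{(1)}) - f_2(\bm{q}^{(1)}) = D_1 - f_2(\bm{q}^{(1)}) \geq D_1 - D_2 \geq 0$. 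Moreover, from $\mathcal{J} \subsetneq \mathcal{J}_1 \times \mathcal{J}_2$ together with \eqref{e1}, one finds $\sum_l a_{i_l}^{t_1}b_{j_l}^{t_2} < \sum_{i\in\mathcal{J}_1}a_i^{t_1}\cdot\sum_{j\in\mathcal{J}_2}b_j^{t_2} = 1 = \sum_l a_{i_l}^{t_1}b_{j_l}^{D_1-t_1}$; the strict monotonicity of $s \mapsto b_{j_l}^s$ then gives $D_1 < t_1 + t_2$. Combined with $RA(\bm{q}^{(1)}) < 0$, these sign considerations force $RA(\bm{q}^{(1)}) - SB(\bm{q}^{(1)}) \geq 0$.

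The main obstacle, and the last step, is ruling out the boundary case $RA(\bm{q}^{(1)}) = SB(\bm{q}^{(1)})$. If it occurred, then $f_2(\bm{q}^{(1)}) = f_1(\bm{q}^{(1)}) = D_1$, and the chain $D_1 \geq D_2 \geq f_2(\bm{q}^{(1)}) = D_1$ would force equality throughout, making $\bm{q}^{(1)}$ a maximizer of $f_2$. By the uniqueness noted at the end of part (a), this forces $\bm{q}^{(1)} = \bm{q}^{(2)}$. Equating components and using $D_1 = D_2$ yields $a_{i_l}^{t_1 + t_2 - D_1} = b_{j_l}^{t_1 + t_2 - D_1}$ for every $l$, and since $t_1 + t_2 - D_1 > 0$, we conclude $a_{i_l} = b_{j_l}$ for every $l$, directly contradicting Assumption \eqref{e38}. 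Therefore the inequality is strict, and $\bm{q}^{(1)} \in \text{int}(\mathcal{S}_1)$, as desired.
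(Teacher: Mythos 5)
Your proof is correct, and it takes a genuinely different route from the paper's on both parts. For part (a) the paper invokes the Lagrange multiplier computation underlying Lemmas \ref{le5} and \ref{le6}, whereas you reduce everything to the Gibbs inequality $QQ(\bm{q})\geq \sum_l q_l\log q_l^{(1)}$, which after rearrangement and division by $SB(\bm{q})<0$ gives $f_1(\bm{q})\leq D_1$ with equality exactly at $\bm{q}^{(1)}$; this is more elementary and, importantly, yields uniqueness of the maximizers for free. For part (b) the paper works with the implicitly defined pressure-type function $y(x)$ of Lemma \ref{le16} ($\sum_l a_{i_l}^x b_{j_l}^{y(x)}=1$), using $y''>0$ and monotonicity of $x+y(x)$ on $[D_2-t_2,t_1]$ to deduce $y'(t_1)>-1$, i.e. $RA(\bm{q}^{(1)})>SB(\bm{q}^{(1)})$; you instead exploit the exact factorization $f_1-f_2=\frac{(RA-SB)(QQ-t_1RA-t_2SB)}{RA\cdot SB}$, the elementary estimate $D_1<t_1+t_2$ (which you prove directly from $\mathcal{J}\subsetneq\mathcal{J}_1\times\mathcal{J}_2$ rather than via the product formula for box dimension), and then rule out the borderline case $RA(\bm{q}^{(1)})=SB(\bm{q}^{(1)})$ by uniqueness of maximizers together with Assumption \eqref{e38}, which would force $a_{i_l}=b_{j_l}$ for all $l$. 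Your argument is self-contained (no implicit function theorem, no convexity of $y$, no hypothesis $\mathcal{S}_1\cap\mathcal{S}_2\neq\emptyset$ behind Lemma \ref{le16}), at the cost of the extra case analysis at the boundary; the paper's argument is shorter given that Lemma \ref{le16} is already in place and mirrors the technique used in Lemma \ref{le6}. Your criterion for membership in $\text{int}(\mathcal{S}_1)$ (positive components plus $RA>SB$) matches what the paper itself uses and what is needed downstream in Lemma \ref{le4}.
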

	\begin{proof}
		Part \eqref{e30} is derived from a similar calculation used in the proofs of Lemmas \ref{le5} and \ref{le6}, employing the Lagrange multipliers method with  objective functions $f_1,f_2$ under the constraint $\bm{q}\in \mathcal{S}$.
		
		For part \eqref{e31}, recall the function $y(x)$ defined in Lemma \ref{le16}, we have 
		$$
		D_1-t_1=y(t_1),\quad t_2=y(D_2-t_2).
		$$
		Since $\mathcal{J}_1\times \mathcal{J}_2\neq \mathcal{J}$, we have $\dimb K< t_1+t_2$ by the product formula (see \cite{F90}), which gives that  $D_2-t_2<t_1$. When $D_1\geq D_2$, since $y''(x)>0$, we have  $y'(t_1)>-1$ (otherwise, noticing that  the function $x+y(x)$ on $[D_2-t_2, t_1]$ is strictly decreasing, we would have $D_2>D_1$, a contradiction), then combining this with  \eqref{e8}, we have  $RA(\bm{q}^{(1)}) >SB(\bm{q}^{(1)})$, so  $\bm{q}^{(1)}\in \text{int}(\mathcal{S}_1)$. The case $D_2\geq D_1$ follows similarly.
		
	\end{proof}

	\begin{lemma}\label{le4}The following statements are true:
		\begin{enumerate}[(a)]
			\item \label{e4}
			for all $\bm{q}\in \emph{int}(\mathcal{S}_1)$, $g_1(\bm{q})\leq f_1(\bm{q})$, and equality holds if and only if $\frac{RR(\bm{q})}{RA(\bm{q})}=t_1$,
			\item \label{e9}
			for all $\bm{q}\in \emph{int}(\mathcal{S}_2)$, $g_2(\bm{q})\leq f_2(\bm{q})$, and equality holds if and only if $\frac{SS(\bm{q})}{SB(\bm{q})}=t_2$.
		\end{enumerate}
	\end{lemma}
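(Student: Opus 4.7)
The plan is to compute the difference $f_1(\bm{q}) - g_1(\bm{q})$ directly from the definitions and factor it into a product whose two factors have easily identifiable signs. Expanding and collecting terms over the denominators $RA(\bm{q})$ and $SB(\bm{q})$, a short manipulation should yield
$$
f_1(\bm{q}) - g_1(\bm{q}) = \big(RR(\bm{q}) - t_1 RA(\bm{q})\big) \left(\frac{1}{SB(\bm{q})} - \frac{1}{RA(\bm{q})}\right).
$$

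For $\bm{q} \in \mathrm{int}(\mathcal{S}_1)$ we have $RA(\bm{q}) > SB(\bm{q})$ with both quantities strictly negative (since $0<a_i,b_j<1$), so the second factor is strictly positive. Thus the sign of $f_1 - g_1$, and its vanishing, are controlled entirely by the first factor.

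To handle the first factor I would invoke the Gibbs inequality. By \eqref{e1} the vector $(a_i^{t_1})_{i\in \mathcal{J}_1}$ is a probability vector on $\mathcal{J}_1$, and since $\bm{q}\in\mathrm{int}(\mathcal{S}_1)\subseteq\mathrm{int}(\mathcal{S})$ the vector $(R_i(\bm{q}))_{i\in \mathcal{J}_1}$ is also a probability vector with strictly positive entries on $\mathcal{J}_1$. Rewriting gives
$$
RR(\bm{q}) - t_1 RA(\bm{q}) = \sum_{i\in \mathcal{J}_1} R_i(\bm{q}) \log \frac{R_i(\bm{q})}{a_i^{t_1}} \geq 0,
$$
which is the nonnegativity of the Kullback--Leibler divergence, with equality iff $R_i(\bm{q}) = a_i^{t_1}$ for every $i \in \mathcal{J}_1$. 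This last condition is equivalent to $\frac{RR(\bm{q})}{RA(\bm{q})} = t_1$: one direction is immediate by summing, and for the converse, once $RR(\bm{q}) = t_1 RA(\bm{q})$ the KL-divergence vanishes and so the vectors coincide.

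Combining the two observations proves $g_1(\bm{q}) \leq f_1(\bm{q})$ on $\mathrm{int}(\mathcal{S}_1)$ with the stated equality characterization, completing (a). Part (b) follows by the symmetric argument obtained by exchanging the roles of $(a_i, \mathcal{J}_1, RR, RA)$ with $(b_j, \mathcal{J}_2, SS, SB)$, using that $SB(\bm{q}) > RA(\bm{q})$ on $\mathrm{int}(\mathcal{S}_2)$. There is no substantive obstacle; the only real step is spotting the clean factorization of $f_1 - g_1$, after which the Gibbs inequality finishes the argument.
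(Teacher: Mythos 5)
Your proof is correct and follows essentially the same route as the paper: the identical factorization $f_1-g_1=(RR(\bm{q})-t_1RA(\bm{q}))\bigl(\tfrac{1}{SB(\bm{q})}-\tfrac{1}{RA(\bm{q})}\bigr)$, with the second factor positive on $\mathrm{int}(\mathcal{S}_1)$ since $0>RA(\bm{q})>SB(\bm{q})$. The only cosmetic difference is that you establish the sign of the first factor directly by the Gibbs/KL inequality against the probability vector $(a_i^{t_1})_{i\in\mathcal{J}_1}$, whereas the paper invokes the variational fact $t_1=\max_{\bm{q}\in\mathcal{S}}RR(\bm{q})/RA(\bm{q})$ for the self-similar projection $\pi_1(K)$ — the same inequality in different clothing.
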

	\begin{proof}
		It suffices to prove \eqref{e4}. Note that  
		$$
		\begin{aligned}
			f_1(\bm{q})-g_1(\bm{q})&=\frac{QQ(\bm{q})-t_1\big (RA(\bm{q})-SB(\bm{q})\big )}{SB(\bm{q})}-\frac{QQ(\bm{q})}{SB(\bm{q})}-RR(\bm{q})\left(\frac{1}{RA(\bm{q})}-\frac{1}{SB(\bm{q})}\right)
			\\
			&=\left( \frac{1}{SB(\bm{q})}-\frac{1}{RA(\bm{q})}\right)\left( \frac{RR(\bm{q})}{RA(\bm{q})}-t_1 \right) RA(\bm{q}).
		\end{aligned}
		$$
		Recall that $t_1=\dimh \pi_1(K)$, $\pi_1(K)$ is a self-similar set satisfying OSC, so $t_1$ is the maximum of dimension of self-similar measures, i.e.
		$$
		t_1=\max_{\bm{q}\in \mathcal{S}} \frac{RR(\bm{q})}{RA(\bm{q})}.
		$$
		If $\bm{q}\in \text{int}(\mathcal{S}_1)$, then $0>RA(\bm{q})>SB(\bm{q})$. So  $f_1(\bm{q})-g_1(\bm{q})\geq 0$, and equality holds if and only if $\frac{RR(\bm{q})}{RA(\bm{q})}=t_1$.
	\end{proof}
	
	\begin{lemma}\label{le19}
		For a \bara~carpet $K$, when $\mathcal{J}_1 \times \mathcal{J}_2 \neq \mathcal{J}$, we have
		\begin{enumerate}[(a)]
			\item \label{e69} if $G_1\geq G_2$ and $G_1=D_1$, then $K$ has uniform vertical fibres,
			\item \label{e70} if $G_2\geq G_1$ and $G_2=D_2$, then $K$ has uniform horizontal fibres.
		\end{enumerate}
	\end{lemma}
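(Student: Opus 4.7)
The plan is to fix an arbitrary maximizer $\bm{q}^{(1)}$ of $g_1$ on $\mathcal{S}$ and extract enough rigidity from the hypotheses to force the fibre sums to be constant. First, Proposition~\ref{le8}, together with $G_1 \geq G_2$ and $\mathcal{J}_1 \times \mathcal{J}_2 \neq \mathcal{J}$, places $\bm{q}^{(1)} \in \text{int}(\mathcal{S}_1)$; in particular, $\rho_1 := RA(\bm{q}^{(1)})/SB(\bm{q}^{(1)}) \in (0,1)$. Lemma~\ref{le4}\eqref{e4} then gives
\[
G_1 = g_1(\bm{q}^{(1)}) \leq f_1(\bm{q}^{(1)}) \leq D_1,
\]
and the assumption $G_1 = D_1$ collapses this to a double equality. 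The left equality, via the equality criterion in Lemma~\ref{le4}\eqref{e4}, yields $\theta_1 := RR(\bm{q}^{(1)})/RA(\bm{q}^{(1)}) = t_1$; the right equality says that $\bm{q}^{(1)}$ is also a maximizer of $f_1$.

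I would then merge two explicit descriptions of $\bm{q}^{(1)}$. A Lagrange multiplier computation analogous to the proof of Lemma~\ref{le3}\eqref{e30} shows the unique interior critical point of $f_1$ is $(a_{i_l}^{t_1} b_{j_l}^{D_1-t_1})_{l=1}^d$; meanwhile Lemma~\ref{le5} expresses
\[
\bm{q}^{(1)} = \Big( a_{i_l}^{\theta_1} b_{j_l}^{\lambda_1} \gamma_{i_l}^{\rho_1 - 1} \Big)_{l=1}^d, \qquad \gamma_i := \sum_{m \in I_i} b_{j_m}^{\lambda_1}.
\]
Substituting $\theta_1 = t_1$ and equating coordinatewise gives
\[
\gamma_{i_l}^{\rho_1 - 1} = b_{j_l}^{D_1 - t_1 - \lambda_1} \qquad (l = 1, \dots, d),
\]
and since $\rho_1 - 1 \neq 0$, taking logarithms yields $\gamma_{i_l} = b_{j_l}^{\alpha}$ with $\alpha := (D_1 - t_1 - \lambda_1)/(\rho_1 - 1)$.

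To close the argument, split on the value of $\alpha$. If $\alpha = 0$, then $\gamma_i \equiv 1$, so $\sum_{m \in I_i} b_{j_m}^{\lambda_1} = 1$ for every $i \in \mathcal{J}_1$, and $K$ has uniform vertical fibres with exponent $\lambda_1$. If $\alpha \neq 0$, then because $\gamma_{i_l}$ depends on $l$ only through $i_l$, the identity $\gamma_{i_l} = b_{j_l}^{\alpha}$ forces $b_{j_l}$ to be constant on each $I_i$: write $b^{(i)}$ for this common value and $n_i = |I_i|$. Evaluating $\gamma_i$ directly gives $\gamma_i = n_i (b^{(i)})^{\lambda_1}$, and comparing with $\gamma_i = (b^{(i)})^{\alpha}$ yields $n_i (b^{(i)})^{\lambda_1 - \alpha} = 1$. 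Thus $t := \lambda_1 - \alpha$ is a single exponent with $\sum_{m \in I_i} b_{j_m}^t = n_i (b^{(i)})^t = 1$ for every $i \in \mathcal{J}_1$, so uniform vertical fibres hold in this case too. Part~\eqref{e70} follows by the completely symmetric argument with $(g_2, f_2, D_2, t_2, \text{int}(\mathcal{S}_2))$ replacing $(g_1, f_1, D_1, t_1, \text{int}(\mathcal{S}_1))$, producing uniform horizontal fibres.

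The main obstacle I expect is the second case $\alpha \neq 0$: one has to recognize that the relation $\gamma_{i_l} = b_{j_l}^{\alpha}$ cannot coexist with $\gamma_i$ depending only on $i$ unless the heights collapse on each $I_i$, and then verify that the candidate exponent $\lambda_1 - \alpha$ is genuinely uniform in $i$. A secondary subtlety is justifying the uniqueness of the interior critical point of $f_1$ in the Lagrange step, since $f_1$ is not obviously concave on $\mathcal{S}$; the Lagrange equations must be solved explicitly (as in Lemma~\ref{le3}\eqref{e30}) to confirm that every interior critical point coincides with $(a_{i_l}^{t_1} b_{j_l}^{D_1 - t_1})_{l=1}^d$.
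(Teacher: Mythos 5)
Your proposal is correct, and it shares the paper's opening moves: Proposition \ref{le8} places the $g_1$-maximizer $\bm{q}^{(1)}$ in $\mathrm{int}(\mathcal{S}_1)$ with $\rho_1\in(0,1)$, and the chain $G_1=g_1(\bm{q}^{(1)})\le f_1(\bm{q}^{(1)})\le D_1=G_1$ together with the equality criterion of Lemma \ref{le4} gives $\theta_1=t_1$. Where you diverge is the finish. You pin down $\bm{q}^{(1)}$ completely, arguing via a Lagrange computation that the only interior critical point of $f_1$ is $(a_{i_l}^{t_1}b_{j_l}^{D_1-t_1})_{l=1}^d$, and then compare coordinatewise with the Lemma \ref{le5} form, splitting on $\alpha$. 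This works: the normalization constant in your Lagrange step is forced to be $1$ by substituting back (exactly as in the proof of Lemma \ref{le5}), the probability constraint then pins the exponent to $D_1-t_1$, and both branches of your case analysis do yield a single exponent $t$ with $\sum_{m\in I_i}b_{j_m}^{t}=1$ for all $i\in\mathcal{J}_1$ (with $t\ge0$ automatic). Two remarks. First, your case $\alpha\neq0$ is in fact vacuous: since $g_1(\bm{q}^{(1)})=\theta_1+\lambda_1$ by \eqref{e93}, the hypotheses give $\lambda_1=G_1-t_1=D_1-t_1$, i.e. $\alpha=0$; noticing this shortens your argument considerably (though your treatment of $\alpha\neq0$ is correct, so nothing breaks). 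Second, the paper avoids the identification of $\bm{q}^{(1)}$ with the $f_1$-maximizer altogether: having $\theta_1=t_1$ and $\lambda_1=D_1-t_1$, it considers $\phi(\rho)=\sum_{l=1}^d a_{i_l}^{t_1}b_{j_l}^{D_1-t_1}\bigl(\sum_{m\in I_{i_l}}b_{j_m}^{D_1-t_1}\bigr)^{\rho-1}$, observes $\phi(0)=\phi(1)=\phi(\rho_1)=1$ with $\rho_1\in(0,1)$ and $\phi''\ge0$, and concludes $\phi''\equiv0$, forcing each column sum $\sum_{m\in I_i}b_{j_m}^{D_1-t_1}$ to equal $1$. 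The convexity trick spares one the uniqueness claim for interior critical points of $f_1$; your route, at the cost of that extra Lagrange argument, additionally identifies the maximizer of $g_1$ explicitly as the Bernoulli vector $(a_{i_l}^{t_1}b_{j_l}^{D_1-t_1})_{l=1}^d$, which is a slightly stronger byproduct.
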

	\begin{proof}
		Clearly, we only need to prove \eqref{e69}.
		
		By Lemma \ref{le5}, for any $\bm{q}^{(1)}$ with $g_1(\bm{q}^{(1)})=G_1$, $\bm{q}^{(1)}$ has the following form:
		$$
		\bm{q}^{(1)}=\left( a_{i_l}^{\theta_1} b_{j_l}^{\lambda_1} \big( \sum_{m\in I_{i_l}} b_{j_m}^{\lambda_1} \big)^{\rho_1-1} \right)_{l=1}^d
		$$
		where
		$$
		\theta_1=\frac{RR(\bm{q}^{(1)})}{RA(\bm{q}^{(1)})}, \quad 
		\lambda_1=\frac{QQ(\bm{q}^{(1)})-RR(\bm{q}^{(1)})}{SB(\bm{q}^{(1)})}, \quad
		\rho_1=\frac{RA(\bm{q}^{(1)})}{SB(\bm{q}^{(1)})}.
		$$
		By Proposition \ref{le8}, $\bm{q}^{(1)}\in \text{int}(\mathcal{S}_1)$. Thus $0<\rho_1<1$. Using Lemmas \ref{le3} and  \ref{le4}, noticing that $G_1=g_1(\bm{q}^{(1)})\leq f_1(\bm{q}^{(1)})\leq D_1=G_1$, we have $\theta_1=t_1$. Noticing that $\theta_1+\lambda_1=g_1(\bm{q}^{(1)})=G_1=D_1$, we have $\lambda_1=D_1-t_1$. Define a function $\phi:[0,1]\to \mathbb{R}$ by 
		$$
		\phi(\rho)=\sum_{l=1}^d a_{i_l}^{t_1} b_{j_l}^{D_1-t_1} \big( \sum_{m\in I_{i_l}} b_{j_m}^{D_1-t_1}\big)^{\rho-1}.
		$$
		Note that $\phi(0)=\phi(1)=\phi(\rho_1)=1$, $\rho_1\in (0,1)$, and $\phi''(\rho)\geq 0$, where $$
		\phi''(\rho)=\sum_{l=1}^d a_{i_l}^{t_1} b_{j_l}^{D_1-t_1} \big( \sum_{m\in I_{i_l}} b_{j_m}^{D_1-t_1}\big)^{\rho-1}  \big(\log  \sum_{m\in I_{i_l}} b_{j_m}^{D_1-t_1}\big)^2.
		$$
		This gives that  $\phi''\equiv 0$ on $[0,1]$. So for each $l=1,\cdots,d$, $\sum_{m\in I_{i_l}} b_{j_m}^{D_1-t_1}=1$. Therefore, $K$ has uniform vertical fibres.
	\end{proof}
	\begin{proof}[Proof of Theorem \ref{th1}-\eqref{e42}] If $\mathcal{J}_1 \times \mathcal{J}_2 = \mathcal{J}$, there is nothing to prove since $K$ has both uniform vertical and horizontal fibres and $\dimh K=\dimb K=t_1+t_2$. It suffices to assume $\mathcal{J}_1 \times \mathcal{J}_2 \neq \mathcal{J}$.
		\vspace{0.2cm}
		
		``$\Longrightarrow$''. Without loss of generality, assume that $D_1\geq D_2$ and $K$ has uniform vertical fibres. Then there is $t\geq 0$ such that
		$$
		\sum_{l\in I_i} b_{j_l}^t=1,\quad \text{ for all } i\in \mathcal{J}_1.
		$$
		Combining this with \eqref{e1} and \eqref{e10}, we have $t=D_1-t_1$. Let $\bm{q}^{(1)}=(a_{i_l}^{t_1} b_{j_l}^{D_1-t_1})_{l=1}^d$. By Lemma \ref{le3}, $\bm{q}^{(1)} \in \text{int}(\mathcal{S}_1)$. Note that 
		$$
		\frac{RR(\bm{q}^{(1)})}{RA(\bm{q}^{(1)})}=\frac{\sum_{l=1}^d a_{i_l}^{t_1} b_{j_l}^{D_1-t_1}\log \big( a_{i_l}^{t_1} \sum_{m\in I_{i_l}} b_{j_{m}}^{D_1-t_1} \big)}{RA(\bm{q}^{(1)})}=\frac{\sum_{l=1}^d a_{i_l}^{t_1} b_{j_l}^{D_1-t_1}\log  a_{i_l}^{t_1} }{RA(\bm{q}^{(1)})}=t_1.
		$$
		Using Lemmas \ref{le3} and  \ref{le4}, we have  $\max_{\bm{q}\in \mathcal{S}_1} g_1(\bm{q})\geq g_1(\bm{q}^{(1)})= f_1(\bm{q}^{(1)})=D_1$. So $\dimh K=\max_{\bm{q}\in \mathcal{S}} g(\bm{q})\geq D_1=\dimb K$, which gives that $\dimh K=\dimb K$.
		\vspace{0.2cm}
		
		``$\Longleftarrow$''. 
		When $D_1>D_2$, we claim that $G_1\geq G_2$. Noticing that if $\dimh K=G_2>G_1,$ by Proposition \ref{le8}, for any $\bm{q}^{(2)}$ with $g_2(\bm{q}^{(2)})=G_2$, $\bm{q}^{(2)}\in \text{int}(\mathcal{S}_2)$. By Lemmas \ref{le3} and \ref{le4}, we have $G_2=g_2(\bm{q}^{(2)})\leq \max_{\bm{q}\in \mathcal{S}_2} f_2(\bm{q})\leq D_2$. Thus $\dimh K\leq D_2<D_1=\dimb K$, a contradiction. Therefore, $D_1=G_1\geq G_2$. Combining this with Lemma \ref{le19}, $K$ has uniform vertical fibres. The case of $D_2>D_1$ follows in a similar way. When $D_1=D_2$, then either $G_1=D_1$ or $G_2=D_2$. Still by Lemma \ref{le19},  $K$ has uniform vertical or horizontal fibres.
		
	\end{proof}

	\section{Proof of ``$\Longrightarrow$'' in Theorem \ref{th1}-\eqref{e40}}\label{sec5}
	In this section, we provide the proof of ``$\Longrightarrow$'' in Theorem \ref{th1}-\eqref{e40}, i.e. 
	$$
	\ref{H} \Longrightarrow 0<\mathcal{H}^{\dimh K}(K)<+\infty.
	$$

	For $\delta>0$, let 
	\begin{equation}\label{e112}
		\mathcal{Q}_\delta=\{Q(\varepsilon,\delta):\varepsilon\in {\Sigma}^\mathbb{N}\},\quad
		\mathcal{U}_\delta=\{U(\varepsilon,\delta):\varepsilon\in {\Sigma}^\mathbb{N}\}.
	\end{equation}
	Write $\mathcal{Q}=\bigcup_{\delta>0} \mathcal{Q}_{\delta}$ and $\mathcal{U}=\bigcup_{\delta>0} \mathcal{U}_{\delta}$. It is easy to see that $\mathcal{Q}_\delta$ is a cover of $K$, $\mathcal{U}_\delta$ is a disjoint cover of ${\Sigma}^\mathbb{N}$, and there is an one-to-one mapping from $\mathcal{Q}$ to $\mathcal{U}$, denoted by $\Theta$. Note that for $Q\neq Q'\in \mathcal{Q}_{\delta}$, $\Theta(Q)\cap \Theta(Q')=\emptyset.$ Recall that in \eqref{e65},
	$$
	\begin{aligned}
		\Sigma_1&=\left\{\varepsilon\in {\Sigma}^\mathbb{N}:Q(\varepsilon,n) \text{ is of $1$-type for infinitely many } n\in \mathbb{N}  \right\},
		\\
		\Sigma_2&=\left\{\varepsilon\in {\Sigma}^\mathbb{N}:Q(\varepsilon,n) \text{ is of $2$-type for infinitely many } n\in \mathbb{N} \right\},
	\end{aligned}
	$$
	and $K_1=\Pi(\Sigma_1),K_2=\Pi(\Sigma_2)$.

	\begin{lemma}\label{le13}
		For a \bara~carpet $K$, the following statements are true:
		\begin{enumerate}[(a)]
			\item \label{e32}
			if $K$ has uniform vertical fibres, then $\mathcal{H}^{D_1}(K)>0$ and $\mathcal{H}^{D_1}(K_1)<+\infty$,
			\item \label{e33}
			if $K$ has uniform horizontal fibres, then $\mathcal{H}^{D_2}(K)>0$ and $\mathcal{H}^{D_2}(K_2)<+\infty$.
		\end{enumerate}
	\end{lemma}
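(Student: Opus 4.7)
The plan for part (a) is to work with the self-affine measure $\mu^{(1)}:=\mu_{\bm{q}^{(1)}}$ associated with the probability vector $\bm{q}^{(1)}=(a_{i_l}^{t_1}b_{j_l}^{D_1-t_1})_{l=1}^{d}$ of Lemma \ref{le3}, and to obtain both the lower bound on $\mathcal{H}^{D_1}(K)$ and the upper bound on $\mathcal{H}^{D_1}(K_1)$ from two-sided estimates of $\mu^{(1)}$ on approximate squares; part (b) then follows by symmetry with $\bm{q}^{(2)}$ after exchanging the two coordinate directions.

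The key computation is to translate the uniform vertical fibre hypothesis $\sum_{l\in I_i}b_{j_l}^{D_1-t_1}=1$ (valid for every $i\in\mathcal{J}_1$, since it forces $D_1-t_1$ to be the common vertical fibre exponent) into the identity $R_i(\bm{q}^{(1)})=a_i^{t_1}$ and the bound $S_j(\bm{q}^{(1)})=b_j^{D_1-t_1}\sum_{l\in J_j}a_{i_l}^{t_1}\le b_j^{D_1-t_1}$. Plugging these into the product formula of Lemma \ref{le12} then gives, for every approximate square $Q(\varepsilon,\delta)$,
\begin{equation*}
\mu^{(1)}\big(Q(\varepsilon,\delta)\big)\le A_{\varepsilon|_{k_1(\varepsilon,\delta)}}^{t_1}\,B_{\varepsilon|_{k_2(\varepsilon,\delta)}}^{D_1-t_1},
\end{equation*}
with equality whenever $Q(\varepsilon,\delta)$ is of $1$-type. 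Since Lemma \ref{le11}(a) places both factors in $[\delta,C\delta]$, this yields the universal estimate $\mu^{(1)}(Q(\varepsilon,\delta))\le C^{D_1}\delta^{D_1}$ together with the matching lower estimate $\mu^{(1)}(Q(\varepsilon,\delta))\ge\delta^{D_1}$ on $1$-type squares.

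The two conclusions follow from standard density arguments. For $\mathcal{H}^{D_1}(K)>0$, Lemma \ref{le11}(b) allows every $B(x,r)\cap K$ to be covered by a uniformly bounded number of approximate squares of radius $\asymp r$, and the universal upper bound therefore gives $\mu^{(1)}(B(x,r))\lesssim r^{D_1}$, so the mass distribution principle yields $\mathcal{H}^{D_1}(K)\gtrsim\mu^{(1)}(K)=1$. For $\mathcal{H}^{D_1}(K_1)<+\infty$ I would invoke the dual form of the Rogers--Taylor density theorem (cf.\ \cite{M95}): if $\limsup_{r\to 0}\mu(B(x,r))/r^{s}\ge c>0$ at every $x$ in a Borel set $E\subseteq\mathbb{R}^2$, then $\mathcal{H}^{s}(E)\le C\mu(\mathbb{R}^2)/c$. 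Given $x=\Pi(\varepsilon)\in K_1$, the very definition of $\Sigma_1$ produces infinitely many levels $n$ at which $Q(\varepsilon,n)$ is of $1$-type; along such $n$, setting $r_n=\sqrt{2}\,CL_{\varepsilon|_n}\to 0$, the inclusion $Q(\varepsilon,n)\subseteq B(x,r_n)$ from Lemma \ref{le11}(a) combined with the $1$-type lower estimate gives $\mu^{(1)}(B(x,r_n))\gtrsim r_n^{D_1}$, and the density theorem applied to $E=K_1$ closes the argument. The substantive obstacle is the extraction of the two-sided estimate on $\mu^{(1)}(Q(\varepsilon,\delta))$ from the single identity provided by uniform vertical fibres; once that is in place both the mass distribution principle and the Rogers--Taylor density argument are routine.
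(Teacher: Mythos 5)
Your proposal is correct. For the lower bound $\mathcal{H}^{D_1}(K)>0$ you follow essentially the same route as the paper: the same measure $\mu^{(1)}$ associated with $\bm{q}^{(1)}=(a_{i_l}^{t_1}b_{j_l}^{D_1-t_1})_{l=1}^d$, the identities $R_i(\bm{q}^{(1)})=a_i^{t_1}$ and $S_j(\bm{q}^{(1)})\le b_j^{D_1-t_1}$ coming from the uniform vertical fibre exponent being forced to equal $D_1-t_1$, the two-sided estimate on approximate squares via Lemma \ref{le12} and Lemma \ref{le11}, and the mass distribution principle. Where you genuinely diverge is the finiteness of $\mathcal{H}^{D_1}(K_1)$: the paper builds, for each $\delta>0$, an explicit cover of $K_1$ by $1$-type approximate squares of radius at most $\delta$ whose symbolic counterparts $U(\varepsilon,n)$ are pairwise disjoint in $\Sigma^{\mathbb{N}}$, and then bounds $\sum_Q|Q|^{D_1}$ by $(\sqrt{2}C)^{D_1}\sum_Q\nu\big(\Theta(Q)\big)\le(\sqrt{2}C)^{D_1}$; you instead invoke the upper-density half of the Rogers--Taylor theorem (the dual of the paper's Proposition \ref{prop4}), using that every $x=\Pi(\varepsilon)\in K_1$ sees infinitely many scales $L_{\varepsilon|_n}$ at which $Q(\varepsilon,n)$ is of $1$-type, where the exact product formula gives $\mu^{(1)}\big(Q(\varepsilon,n)\big)=A_{\varepsilon|_n}^{t_1}B_{\varepsilon|_{k_2}}^{D_1-t_1}\ge L_{\varepsilon|_n}^{D_1}$ (legitimate since $t_1\ge 0$ and $D_1-t_1\ge 0$), hence $\limsup_{r\to0}\mu^{(1)}(B(x,r))/r^{D_1}\ge(\sqrt{2}C)^{-D_1}$ on $K_1$. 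Your route is shorter because the Vitali-type covering work is absorbed into the cited density theorem (and its upper-bound half applies to arbitrary, not necessarily Borel, sets, so the analyticity of $K_1=\Pi(\Sigma_1)$ causes no trouble), whereas the paper's construction is self-contained and exploits disjointness in symbolic space to avoid any appeal to an external covering theorem; both yield the comparable bound $\mathcal{H}^{D_1}(K_1)=O\big((\sqrt{2}C)^{D_1}\big)$, and part \eqref{e33} follows by the same symmetry you indicate.
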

	\begin{proof}
		We only need to prove \eqref{e32}.
		\vspace{0.2cm}

		``$\mathcal{H}^{D_1}(K)>0$''. Let $\bm{q}=\big(a_{i_l}^{t_1} b_{j_l}^{D_1-t_1}\big)_{l=1}^d\in \mathcal{S}$. Let $\nu$, $\mu$ be the Bernoulli and self-affine measures associated with $\bm{q}$, respectively. Since $K$ has uniform vertical fibres, there is  $t\geq 0$ such that
		$$
		\sum_{l\in I_i} b_{j_l}^t=1,\quad \text{ for all } i\in \mathcal{J}_1.
		$$
		Combining this with  \eqref{e1} and \eqref{e10}, we have $t=D_1-t_1$. Thus, for $i\in \mathcal{J}_1$ and $j\in \mathcal{J}_2$, we have  
		\begin{equation}\label{e66}
			R_i(\bm{q})=\sum_{l\in I_i} a_{i}^{t_1} b_{j_l}^{D_1-t_1}=a_i^{t_1}, \quad
			S_j(\bm{q})=\sum_{l\in J_j} a_{i_l}^{t_1} b_{j}^{D_1-t_1}\leq b_j^{D_1-t_1}.
		\end{equation}
		For $\varepsilon\in {\Sigma}^\mathbb{N}$ and $\delta>0$, write $k_1=k_1(\varepsilon,\delta)$ and $k_2=k_2(\varepsilon,\delta)$ for short.
		Using Lemma \ref{le12} and \eqref{e66}, if $Q(\varepsilon,\delta)$ is of $1$-type, we have
		\begin{equation}\label{e34}
			\mu\big(Q(\varepsilon,\delta)\big)=q_{l_1}\cdots q_{l_{k_2}} a_{i_{l_{k_2+1}}}^{t_1}\cdots a_{i_{l_{k_1}}}^{t_1}=A_{\varepsilon|_{k_1}}^{t_1} B_{\varepsilon|_{k_2}}^{D_1-t_1},
		\end{equation}
		and if $Q(\varepsilon,\delta)$ is of $2$-type, we have 
		$$
		\mu\big(Q(\varepsilon,\delta)\big)\leq q_{l_1}\cdots q_{l_{k_1}} b_{j_{l_{k_1+1}}}^{D_1-t_1}\cdots b_{j_{l_{k_2}}}^{D_1-t_1}=A_{\varepsilon|_{k_1}}^{t_1} B_{\varepsilon|_{k_2}}^{D_1-t_1}.
		$$
		Combining this with Lemma \ref{le11}, we have
		$$
		\mu\big(Q(\varepsilon,\delta)\big)\leq C^{D_1} \cdot \delta^{D_1}
		$$
		where the constant $C$ is the same in Lemma \ref{le11}. Thus, by the mass distribution principle, $\mathcal{H}^{D_1}(K)>0$.
		\vspace{0.2cm}
		
		``$\mathcal{H}^{D_1}(K_1)<+\infty$''.
		Note that for $\varepsilon\in \Sigma_1$, there are infinitely many $n\in \mathbb{N}$ such that 
		$Q(\varepsilon,n)$ is of $1$-type. In the following, for $\delta>0$, we will inductively construct a cover of $K_1$ using approximate square with ``radius'' less than $\delta$. Let 
		$$
		\mathcal{U}^{(1)}=\{U({\varepsilon,1}):\varepsilon \in \Sigma_1, U(\varepsilon,1) \text{ is of }1\text{-type}, L_{\varepsilon|_1}\leq \delta\}.
		$$
		Suppose that $\mathcal{U}^{(k)}$ is defined, let 
		$$
		\begin{aligned}
			\mathcal{U}^{(k+1)}=\Big\{U(\varepsilon,k+1):\varepsilon\in \Sigma_1, &U(\varepsilon,k+1) \text{ is of }1\text{-type, } L_{\varepsilon|_{k+1}}\leq \delta, 
			\\
			&U(\varepsilon,k+1)\cap U'=\emptyset \text{ for all }U'\in \bigcup_{n= 1,\cdots,k}\mathcal{U}^{(n)}\Big \}.
		\end{aligned}
		$$
		Write $\tilde{\mathcal{U}}=\bigcup_{n\geq  1} \mathcal{U}^{(n)}$ and $\tilde{\mathcal{Q}}=\Theta^{-1}(\tilde{\mathcal{U}})$. For each $\varepsilon\in \Sigma_1$, by choosing 
		$$
		k=\min \Big\{l\in \mathbb{N}: U(\varepsilon,l) \text{ is of $1$-type and } L_{\varepsilon|_l}\leq \delta \Big\},
		$$ 
		it is directly to see that $U(\varepsilon,k)\in \mathcal{U}^{(k)}$.
		Therefore, by the construction, $\tilde{\mathcal{U}}$ is a disjoint cover of $\Sigma_1$ and $\tilde{\mathcal{Q}}$ is a  cover of $K_1$ with ``radius'' less than $\delta$.

		\vspace{0.2cm}
		
		For $Q(\varepsilon,n)\in \tilde{\mathcal{Q}}$, since $Q(\varepsilon,n)$ is of $1$-type, by Lemma \ref{le21}, we have $L_{\varepsilon|_n}=A_{\varepsilon|_n},$ and so $k_1(\varepsilon,L_{\varepsilon|_n})=n$, $k_2:=k_2(\varepsilon,L_{\varepsilon|_n})\leq n$. Then  by Lemmas \ref{le11}, \ref{le12} and \eqref{e34}, we have 
		$$
		\begin{aligned}
			\big|Q(\varepsilon,n)\big|^{D_1}&\leq (\sqrt{2}C A_{\varepsilon|_n})^{D_1}\leq (\sqrt{2}C)^{D_1} A_{\varepsilon|_{n}}^{t_1} B_{\varepsilon|_{k_2}}^{D_1-t_1}
			\\
			&=(\sqrt{2}C)^{D_1}\mu\big(Q(\varepsilon,n)\big)=(\sqrt{2}C)^{D_1}\nu\big(\Theta(Q(\varepsilon,n))\big).
		\end{aligned}
		$$
		Therefore,
		$$
		\mathcal{H}_{\delta}^{D_1}(K_1)\leq \sum_{Q\in \tilde{\mathcal{Q}}} |Q|^{D_1} \leq \sum_{Q\in \tilde{\mathcal{Q}}} (\sqrt{2}C)^{D_1}\nu\big(\Theta(Q)\big)\leq (\sqrt{2}C)^{D_1},
		$$
		which by letting $\delta \to 0$ gives that $\mathcal{H}^{D_1}(K_1)<+\infty$.
		
	\end{proof}
	
	For $\varepsilon=\varepsilon_1\cdots\varepsilon_n\in \Sigma^*$, and $l=1,\cdots, d$, define 
	$$
	\varrho_{\varepsilon}^{(l)}= \frac{\#\{m\leq n:\varepsilon_m=l\}}{n},\quad\quad \bm{\varrho}_{\varepsilon}= \left(\varrho_{\varepsilon}^{(1)},\cdots, \varrho_{\varepsilon}^{(d)} \right).
	$$
	Note that $\bm{\varrho}_{\varepsilon} \in \mathcal{S}$.
	Let $\mathcal{P}\subseteq \mathcal{S}$ be a closed subset, define 
	$$
	\Sigma({\mathcal{P}})=\left \{\varepsilon\in \Sigma^{\mathbb{N}}: \text{all accumulation points of sequence } \{\bm{\varrho}_{\varepsilon|_n}\}_{n\geq 1} \text{ belong to } \mathcal{P} \right\},
	$$
	and write $K(\mathcal{P})=\Pi\big(\Sigma(\mathcal{P})\big)$. We have a following observation of the estimation  ``$\dimh K\leq \max_{\bm{q}\in \mathcal{S}} g(\bm{q})$'' in \cite[pp. 232-235]{B07}, through a local dimension estimation technique.

	\begin{lemma}\label{le14}
		Let $\mathcal{P}\subseteq \mathcal{S}$ be a closed subset. We have
		$$
		\dimh K(\mathcal{P})\leq \max_{\bm{q}\in \mathcal{P}} g(\bm{q}).
		$$
	\end{lemma}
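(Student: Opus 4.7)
The plan is to adapt Bara\'nski's proof of the upper bound $\dimh K \leq \max_{\bm{q}\in\mathcal{S}} g(\bm{q})$ from \cite[pp.~232--235]{B07} and localize it to the restricted symbolic set $\Sigma(\mathcal{P})$. First, I observe that $g$ is continuous on $\mathcal{S}$: Assumption~\eqref{e38} keeps $RA(\bm{q})$ and $SB(\bm{q})$ strictly negative and continuous on $\mathcal{S}$, so $g_1, g_2$ are continuous, and by Lemma~\ref{le2} they agree on $\mathcal{S}_1\cap\mathcal{S}_2$, so the piecewise formula~\eqref{e84} glues continuously.

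Fix $\eta>0$ and, using compactness, cover $\mathcal{P}$ by finitely many balls $B(\bm{q}_1,\eta),\ldots,B(\bm{q}_N,\eta)$ with $\bm{q}_i\in\mathcal{P}$. For each $\varepsilon\in\Sigma(\mathcal{P})$, $\{\bm{\varrho}_{\varepsilon|_n}\}$ eventually lies in the $\eta$-neighborhood of $\mathcal{P}$, so by pigeonhole some $B(\bm{q}_i,2\eta)$ contains $\bm{\varrho}_{\varepsilon|_n}$ for infinitely many $n$. Splitting further by the type $\tau\in\{1,2\}$ of $Q(\varepsilon,n)$ at those levels decomposes $\Sigma(\mathcal{P})$ into finitely many pieces $\Sigma_{i,\tau}$, so
$$\dimh K(\mathcal{P})\leq\max_{i,\tau}\dimh\Pi(\Sigma_{i,\tau}).$$
Since $\log A_{\varepsilon|_n}=n\,RA(\bm{\varrho}_{\varepsilon|_n})$ and $\log B_{\varepsilon|_n}=n\,SB(\bm{\varrho}_{\varepsilon|_n})$ are exact identities, for $\bm{q}_i$ in the strict interior of $\mathcal{S}_1$ only type-$1$ squares can occur once $\eta$ is small (and symmetrically for $\mathcal{S}_2$); on the interface $\mathcal{S}_1\cap\mathcal{S}_2$ both types may occur but $g_1(\bm{q}_i)=g_2(\bm{q}_i)=g(\bm{q}_i)$, so in every nonempty piece the dimension bound will naturally give $g(\bm{q}_i)+O(\eta)$.

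For each nonempty $\Sigma_{i,\tau}$ and each $\delta>0$, I cover $\Pi(\Sigma_{i,\tau})$ by selecting, for each $\varepsilon$, a level $n$ with $L_{\varepsilon|_n}\leq\delta$, $\bm{\varrho}_{\varepsilon|_n}\in B(\bm{q}_i,2\eta)$, and $Q(\varepsilon,n)$ of type $\tau$, and including $Q(\varepsilon,n)$; by Lemma~\ref{le11} these have diameter $\leq\sqrt{2}C\delta$. The heart of the argument is Bara\'nski's counting estimate: for $\tau=1$, $Q(\varepsilon,n)$ is determined by the pair $(\varepsilon|_{k_2},i_{\varepsilon_{k_2+1}},\ldots,i_{\varepsilon_n})$ with $k_2/n\approx RA(\bm{q}_i)/SB(\bm{q}_i)=:\rho$, and a Stirling / type-class argument (accounting for the word-to-pair multiplicity) shows the number of distinct such pairs arising from words with empirical frequency in $B(\bm{q}_i,2\eta)$ is at most $\exp(k_2(-QQ(\bm{q}_i))+(n-k_2)(-RR(\bm{q}_i))+C\eta n+o(n))$. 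Combining with $\delta^s\leq\exp(ns\,RA(\bm{q}_i)+o(n))$ and the algebraic identity
$$\rho(-QQ(\bm{q}_i))+(1-\rho)(-RR(\bm{q}_i))+s\,RA(\bm{q}_i)=RA(\bm{q}_i)\cdot\big(s-g_1(\bm{q}_i)\big),$$
the $s$-sum per level $n$ is bounded by $\exp(n\,RA(\bm{q}_i)(s-g_1(\bm{q}_i))+C'\eta n+o(n))$, which is summable over $n$ and tends to $0$ as $\delta\to 0$ whenever $s>g_1(\bm{q}_i)+C''\eta$; hence $\dimh\Pi(\Sigma_{i,1})\leq g_1(\bm{q}_i)+C''\eta$. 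The case $\tau=2$ is symmetric. Aggregating and invoking continuity of $g$ yields $\dimh K(\mathcal{P})\leq\max_{\bm{q}\in\mathcal{P}}g(\bm{q})+C'''\eta$, and letting $\eta\to 0$ concludes.

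The main obstacle is the type-class counting step. Although the whole-word frequency $\bm{\varrho}_{\varepsilon|_n}$ is close to $\bm{q}_i$, the marginal frequencies of the prefix $\varepsilon|_{k_2}$ and of the $i$-indices in the suffix are individually unconstrained, subject only to a linear compatibility relation. One must enumerate over admissible marginals, apply Stirling to each type class, and verify via a concavity argument -- together with careful accounting for the multiplicity of the word-to-pair correspondence induced by the free $j$-indices of the suffix -- that the maximum yields precisely the exponent $k_2(-QQ(\bm{q}_i))+(n-k_2)(-RR(\bm{q}_i))+O(\eta)n$. This is the technical core of Bara\'nski's original argument; our additional work is to carry it out uniformly as $\bm{\varrho}_{\varepsilon|_n}$ ranges over a $2\eta$-ball around $\bm{q}_i$ and to pass to the supremum over $\mathcal{P}$ via the continuity of $g$.
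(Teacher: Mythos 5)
First, a point of reference: the paper does not actually prove Lemma \ref{le14}; it records it as an observation extracted from Bara\'nski's upper-bound argument in \cite[pp.~232--235]{B07} ``through a local dimension estimation technique''. So your sketch has to stand on its own, and it has a genuine gap exactly at the step you yourself identify as the technical core. The claimed counting bound is false: it is not true that the number of $1$-type approximate squares at level $n$ arising from words with $\bm{\varrho}_{\varepsilon|_n}\in B(\bm{q}_i,2\eta)$ is at most $\exp\big(k_2(-QQ(\bm{q}_i))+(n-k_2)(-RR(\bm{q}_i))+C\eta n+o(n)\big)$, and the maximum over admissible prefix/suffix marginals is in general \emph{not} attained at the centre $\bm{q}_i$, so no concavity argument can rescue it. Concretely, take the Bedford--McMullen carpet with $a_1=a_2=\tfrac12$, $b_1=\cdots=b_4=\tfrac14$, $\#I_1=4$, $\#I_2=1$ (so $d=5$, every square is of $1$-type, $k_2=\lfloor n/2\rfloor$, $RA\equiv-\log2$, $SB\equiv-\log4$), and $\bm{q}_i=\bm{q}=(\tfrac15,\dots,\tfrac15)$. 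A word with $\bm{\varrho}_{\varepsilon|_n}\approx\bm{q}$ may have prefix frequency $\bm{p}$ uniform on $I_1$ with total mass $P\in[\tfrac35,1]$ and mass $1-P$ on the single cell of $I_2$; the suffix frequency is then $2\bm{q}-\bm{p}$, and the number of distinct pairs (prefix word, column word of the suffix), i.e.\ of distinct approximate squares, is $\exp\big(\tfrac n2\Phi(P)-o(n)\big)$ with
$$
\Phi(P)=-P\log P-(1-P)\log(1-P)+P\log4-\big(\tfrac85-P\big)\log\big(\tfrac85-P\big)-\big(P-\tfrac35\big)\log\big(P-\tfrac35\big).
$$
One checks $\tfrac n2\Phi(\tfrac45)=n\,g_1(\bm{q})\log 2$ (the exponent you claim), but $\Phi'(\tfrac45)=\log4>0$, so taking $P$ slightly above $\tfrac45$ the count exceeds $\exp\big(n(g_1(\bm{q})+c)\log2\big)=L_{\varepsilon|_n}^{-(g_1(\bm{q})+c)}$ for some $c>0$ independent of $\eta$. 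Hence your level-$n$ sum $\sum|Q|^{s}$ diverges for all $s$ up to $g_1(\bm{q}_i)+c$, and the bound $\dimh\Pi(\Sigma_{i,1})\le g_1(\bm{q}_i)+O(\eta)$ is not obtained.

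The structural reason is that your decomposition into the pieces $\Sigma_{i,\tau}$ retains only the \emph{full-word} frequency at the good scales, whereas a $1$-type square is indexed by the prefix $\varepsilon|_{k_2}$ together with the column word of $\varepsilon_{k_2+1}\cdots\varepsilon_n$, and $\bm{\varrho}_{\varepsilon|_{k_2}}$ is not controlled by $\bm{\varrho}_{\varepsilon|_n}\in B(\bm{q}_i,2\eta)$. For $\varepsilon\in\Sigma(\mathcal{P})$ the prefix frequency is in fact close to $\mathcal{P}$ (possibly to a \emph{different} point of $\mathcal{P}$), but your reduction discards this, and with it the claim $k_2/n\approx RA(\bm{q}_i)/SB(\bm{q}_i)$ also fails in genuine Bara\'nski carpets where $SB$ is not constant. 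Any repair along your lines must (i) choose the covering levels so that $\bm{\varrho}_{\varepsilon|_{k_2}}$ is also near $\mathcal{P}$, and (ii) prove a ``two-point'' variational inequality bounding $t(-QQ(\bm{q}'))+(1-t)(-RR(\cdot))$ by $\max_{\mathcal{P}}g\cdot|RA(\bm{q}'')|$ for $\bm{q}',\bm{q}''\in\mathcal{P}$ linked by the cut condition $t\,SB(\bm{q}')\approx RA(\bm{q}'')$ and the nonnegativity of the implied suffix frequency; neither is supplied by the proposed Stirling/type-class step, and neither follows formally from Bara\'nski's global case $\mathcal{P}=\mathcal{S}$ (the paper's intended route is measure-theoretic, via estimates of $\mu\big(Q(\varepsilon,n)\big)$ and a Billingsley/Rogers--Taylor type lemma, rather than pure counting). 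As it stands, the proposal has a genuine gap at its central estimate.
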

	
	\begin{proof}[Proof of ``$\Longrightarrow$'' in Theorem \ref{th1}-\eqref{e40}]
		If $\mathcal{J}_1\times \mathcal{J}_2 =\mathcal{J},$ $K$ has both uniform vertical and horizontal fibres. Using Lemma \ref{le13}, we have 
		\begin{equation} \label{e103} 
			\begin{aligned}
				&\mathcal{H}^{D_1}(K)>0, \quad \mathcal{H}^{D_1}(K_1)<+\infty,
				\\
				&\mathcal{H}^{D_2}(K)>0, \quad \mathcal{H}^{D_2}(K_2)<+\infty.
			\end{aligned}
		\end{equation}
		By the definitions of $D_1$ and $D_2$ in \eqref{e10} and using the product formula, we find that $D_1=D_2=t_1+t_2=\dimh K.$ Combining this with \eqref{e103}, we have 
		$$
		0<\mathcal{H}^{\dimh K}(K)\leq \mathcal{H}^{D_1}(K_1)+\mathcal{H}^{D_2}(K_2)<+\infty.
		$$

		Next, we consider the case $\mathcal{J}_1 \times \mathcal{J}_2 \neq \mathcal{J}$. By Proposition \ref{le15}, conditions \ref{H} and \ref{H'} are equivalent. When $G_1>G_2$ and $K$ has uniform vertical fibres, using Lemma \ref{le13}, we have 
		\begin{equation}\label{e68}
			\mathcal{H}^{D_1}(K)>0, \quad\quad \mathcal{H}^{D_1}(K_1)<+\infty.
		\end{equation}
		Let $\Sigma_1^c=\Sigma^{\mathbb{N}} \setminus \Sigma_1$ and $K_1^c=\Pi(\Sigma_1^c)$. For $\varepsilon\in \Sigma_1^c$, notice that for $n\geq 1 $,
		$$
		A_{\varepsilon|_n}=\prod_{l=1}^{d}a_{i_l}^{n{\varrho}_{\varepsilon|_n}^{(l)}}, \quad \quad
		B_{\varepsilon|_n}=\prod_{l=1}^{d}b_{j_l}^{n{\varrho}_{\varepsilon|_n}^{(l)}},
		$$
		so
		\begin{equation}\label{e67}
			\log A_{\varepsilon|_n}=n \sum_{l=1}^d {\varrho}_{\varepsilon|_n}^{(l)} \log a_{i_l}, \quad
			\log B_{\varepsilon|_n}=n \sum_{l=1}^d {\varrho}_{\varepsilon|_n}^{(l)} \log b_{j_l}.
		\end{equation}
		Since for all sufficiently large $n$, $Q(\varepsilon,n)=Q(\varepsilon,L_{\varepsilon|_n})$ is of $2$-type, by Lemma \ref{le21}, we know that $A_{\varepsilon|_n}\leq B_{\varepsilon|_n}$. Combining this with \eqref{e67}, we see that 
		$$
		\sum_{l=1}^d \varrho_{\varepsilon|_n}^{(l)} \log a_{i_l} \leq \sum_{l=1}^d \varrho_{\varepsilon|_n}^{(l)} \log b_{j_l}.
		$$
		Hence for any accumulation point $\bm{\varrho}$ of the sequence  $\{\bm{\varrho}_{\varepsilon|_n}\}_{n\geq 1}$, 
		$$
		RA(\bm{\varrho}) \leq SB(\bm{\varrho}),
		$$ 
		which gives that $\bm{\varrho}\in \mathcal{S}_2$. Therefore, $\Sigma_1^c\subseteq \Sigma(\mathcal{S}_2)$. Using Lemma \ref{le14} and \eqref{e84}, we see that  
		\begin{equation}\label{e71}
			\dimh K_1^c\leq \dimh K(\mathcal{S}_2)\leq \max_{\bm{q}\in \mathcal{S}_2} g(\bm{q})= \max_{\bm{q}\in \mathcal{S}_2} g_2(\bm{q}) \leq G_2<G_1. 
		\end{equation}
		Note that by Proposition \ref{le8}, for any $\bm{q}\in \mathcal{S}$ with $g_1(\bm{q})=G_1$, $\bm{q}\in \text{int}(\mathcal{S}_1)$. Using Lemmas \ref{le3} and \ref{le4}, we have 
		\begin{equation}\label{e106}
			G_1=\max_{\bm{q}\in \text{int}(\mathcal{S}_1)} g_1(\bm{q})\leq \max_{\bm{q}\in \mathcal{S}_1} f_1(\bm{q}) \leq D_1.
		\end{equation}
		Combining this with \eqref{e68} and \eqref{e71}, we know that $\dimh K=G_1=D_1$ and 
		$$
		0<\mathcal{H}^{\dimh K}(K) \leq \mathcal{H}^{\dimh K}(K_1^c)+\mathcal{H}^{\dimh K }(K_1)=\mathcal{H}^{\dimh K }(K_1)<+\infty.
		$$
		
		When $G_2>G_1$, using a same argument as above, we also have $0<\mathcal{H}^{\dimh K}(K)<+\infty$. 
		
		When $G_1=G_2$, using Lemma \ref{le13}, we see that  \eqref{e103} holds.
		Then $\dimb K=\max\{D_1,D_2\} \leq \dimh K=G_1=G_2$. By Proposition \ref{le8}, Lemmas \ref{le3} and \ref{le4}, we have \eqref{e106} and 
		$$
		G_2=\max_{\bm{q}\in \text{int}(\mathcal{S}_2)} g_2(\bm{q})\leq \max_{\bm{q}\in \mathcal{S}_2} f_2(\bm{q}) \leq D_2,
		$$
		which implies that $\dimh K=D_1=D_2.$ Therefore, by \eqref{e103}, we also have $0<\mathcal{H}^{\dimh K}(K)<+\infty$.
		
	\end{proof}
	
	\begin{remark}\label{re3}
		It follows from the proof of ``$\Longrightarrow$'' in Theorem \ref{th1}-\eqref{e40}, when the condition \ref{H} holds, we have
		$$
		\text{ either } \quad \dimh K=D_1 \quad  \text{ or } \quad \dimh K=D_2.
		$$
	\end{remark}

	\section{Proof of Theorem \ref{th3}}\label{sec7}
	In this section, we will prove Theorem \ref{th3}.  Firstly, we prove that 
	$$
	``\ref{A}  \iff \dimb K=\dima K  \iff  \dimh K =\dima K \text{''}.
	$$

	Recall the definition of $E_1$, $E_2$ in \eqref{e79}, 
	\begin{equation} \nonumber
		\left\{
		\begin{aligned}
			E_1&=t_1+\max_{l\in \{1,\cdots, d\}} S_{1,l},& & E_2=-1, \quad & & \text{ if } K \text{ is of horizontal type},
			\\
			E_1&=-1,\quad& & E_2=t_2+\max_{l\in \{1,\cdots, d\}} S_{2,l}, & & \text{ if } K \text{ is of vertical type},
			\\
			E_1&=t_1+\max_{l\in \{1,\cdots, d\}} S_{1,l},\quad& & E_2=t_2+\max_{l\in \{1,\cdots, d\}} S_{2,l}, & & \text{ if } K \text{ is of mixed type}.
		\end{aligned} \right.
	\end{equation}
	Note that $S_{1,l}$ (resp. $S_{2,l}$) is the Hausdorff dimension of the self-similar set generated by IFS $\{\pi_2\circ\psi_{m}\circ\pi_2^{-1}:m\in I_{i_l}\}$ (resp. $\{\pi_1\circ\psi_{m}\circ\pi_1^{-1}:m\in J_{j_l}\}$), and $K$ has uniform vertical (resp. horizontal) fibres if and only if all $S_{1,l}$ (resp. $S_{2,l}$) are the same. Write
	\begin{equation}\label{e100}
		\tilde{E}_1=t_1+\max_{l\in \{1,\cdots, d\}} S_{1,l},\quad \tilde{E}_2=t_2+\max_{l\in \{1,\cdots, d\}} S_{2,l}. 
	\end{equation}

	\begin{lemma}\label{le9}
		For a \bara~carpet $K$, the following statements are true:
		\begin{enumerate}[(a)]
			\item \label{e13}
			$D_1\leq \tilde{E}_1$, and equality holds if and only if $K$ has uniform vertical fibres,
			\item \label{e14}
			$D_2\leq \tilde{E}_2$, and equality holds if and only if $K$ has uniform horizontal fibres.
		\end{enumerate}
	\end{lemma}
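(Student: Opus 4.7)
By symmetry, it suffices to prove part \eqref{e13}. The strategy is to rewrite the defining equation for $D_1$ as a weighted average (with weights $a_i^{t_1}$, $i\in\mathcal{J}_1$) and then compare this average with the defining equation for each $S_{1,l}$ using monotonicity.

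First I would group the sum defining $D_1$ by the row index $i$: using the partition $\{1,\dots,d\}=\bigsqcup_{i\in\mathcal{J}_1}I_i$, we can write
\begin{equation*}
1 \;=\; \sum_{l=1}^d a_{i_l}^{t_1} b_{j_l}^{D_1-t_1} \;=\; \sum_{i\in\mathcal{J}_1} a_i^{t_1} \Bigl(\sum_{l\in I_i} b_{j_l}^{D_1-t_1}\Bigr).
\end{equation*}
Since $\sum_{i\in\mathcal{J}_1} a_i^{t_1}=1$ by \eqref{e1}, the quantity $T_i(D_1-t_1):=\sum_{l\in I_i} b_{j_l}^{D_1-t_1}$ is a convex combination (over $i\in\mathcal{J}_1$) equal to $1$. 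For each $l$, the exponent $S_{1,l}$ is by \eqref{e15} the unique solution of $T_{i_l}(S_{1,l})=1$, and the map $s\mapsto T_i(s)$ is strictly decreasing since each $b_{j_l}\in(0,1)$.

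Next, I would extract the inequality from this averaging. Since the average of the $T_i(D_1-t_1)$ equals $1$, there must exist some $i^{*}\in\mathcal{J}_1$ with $T_{i^{*}}(D_1-t_1)\ge 1=T_{i^{*}}(S_{1,l^{*}})$ for any $l^{*}$ with $i_{l^{*}}=i^{*}$. By strict monotonicity this forces $D_1-t_1\le S_{1,l^{*}}\le \max_l S_{1,l}$, which gives $D_1\le \tilde E_1$.

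Finally, for the equality characterization: if $K$ has uniform vertical fibres, then all $S_{1,l}$ share a common value $s$, so $T_i(s)=1$ for every $i\in\mathcal{J}_1$, and substituting $s$ for $D_1-t_1$ satisfies the defining equation for $D_1$; by uniqueness $D_1-t_1=s=\max_l S_{1,l}$. Conversely, if $D_1-t_1=\max_l S_{1,l}$, then by monotonicity $T_i(D_1-t_1)\le 1$ for every $i\in\mathcal{J}_1$, and since their weighted average equals $1$ we must have equality for every $i$, i.e.\ $S_{1,l}=D_1-t_1$ for all $l$, which is exactly the uniform vertical fibre condition. I do not expect any serious obstacle; the only point one has to state carefully is the use of strict monotonicity of $T_i$ to convert the inequality between the sums $T_i(\cdot)$ and $1$ into the correct direction of inequality between the exponents.
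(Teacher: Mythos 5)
Your proposal is correct and follows essentially the same route as the paper: both group the defining sum for $D_1$ by columns via $\sum_{i\in\mathcal{J}_1} a_i^{t_1}\sum_{l\in I_i} b_{j_l}^{\,\cdot}$, use strict monotonicity of these exponential sums, and characterize equality by forcing $\sum_{l\in I_i} b_{j_l}^{\tilde E_1-t_1}=1$ for every $i$. The only cosmetic difference is that the paper evaluates the grouped sum at $\tilde E_1$ and compares with $1=$ the value at $D_1$, while you evaluate at $D_1$ and extract the inequality by a pigeonhole/averaging step; the content is the same.
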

	\begin{proof}
		It suffices to provide the proof for part \eqref{e13}.
		\vspace{0.2cm}
		
		Combining the definitions of $t_1$, $S_{1,l}$, and $\tilde{E}_1$ in \eqref{e1}, \eqref{e15} and \eqref{e100}, we have 
		$$
		\sum_{l=1}^d a_{i_l}^{t_1} b_{j_l}^{\tilde{E}_1-t_1}=\sum_{i\in \mathcal{J}_1} a_i^{t_1} \sum_{l\in I_i} b_{j_l}^{\tilde{E}_1-t_1} \leq\sum_{i\in \mathcal{J}_1} a_i^{t_1} = 1.
		$$
		Since 
		$\sum_{l=1}^d a_{i_l}^{t_1} b_{j_l}^{D_1-t_1}=1$, we have $D_1\leq \tilde{E}_1$. And $D_1=\tilde{E}_1$ if and only if 
		$$
		\sum_{l=1}^d a_{i_l}^{t_1} b_{j_l}^{\tilde{E}_1-t_1}=1,
		$$ which is equivalent to that $S_{1,l}=\tilde{E}_1-t_1$ for all $l$. This completes the proof of the lemma.
	\end{proof}

	\begin{proof}[Proof of  Theorem \ref{th3}-\eqref{e11}]
		Note that $\dimh K=\max\{D_1,D_2\}$, $\dima K=\max\{E_1,E_2\}$. It suffices to prove the case $\mathcal{J}_1 \times \mathcal{J}_2 \neq \mathcal{J}$ since the case $\mathcal{J}_1 \times \mathcal{J}_2 = \mathcal{J}$ is obvious.
		\vspace{0.2cm}
		
		``$\ref{A} \Longrightarrow \dimh K=\dima K$''. Without loss of generality, assume that $E_1\geq E_2$ and $K$ has uniform vertical fibres. So $\dima K =E_1 =\tilde{E}_1$, and $K$ is of horizontal or mixed type. Using Lemma \ref{le9}, we have 
		$E_1=D_1.$ Suppose that $D_2>D_1$, then by Lemma \ref{le3}, $\text{int}(\mathcal{S}_2)\neq \emptyset$, so $K$ is of mixed type. Again using Lemma \ref{le9}, we have 
		$$
		D_2\leq \tilde{E}_2 =E_2 \leq \dima K=E_1=D_1,
		$$
		a contradiction. Therefore, $D_1\geq D_2$, $\dimb K=\dima K$, and the condition \ref{B} holds. Combining this with Theorem \ref{th1}-\eqref{e42}, it follows that $\dimh K=\dimb K=\dima K$.
		\vspace{0.2cm}
		
		``$\dimh K =\dima K \Longrightarrow \dimb K =\dima K $''. This is obvious.
		\vspace{0.2cm}
		
		``$\dimb K =\dima K \Longrightarrow \ref{A}$''. Note that 
		$$
		\max \{D_1,D_2\}=\max \{E_1,E_2\}.
		$$
		When $E_1>E_2$, $E_1=\tilde{E}_1$ and $K$ is of horizontal or mixed type. By Lemma \ref{le9}, we find that $D_1\leq \tilde{E}_1=E_1.$ Suppose that $D_2>D_1$, by Lemma \ref{le3}, $K$ is of mixed type. By Lemma \ref{le9} again, we get 
		$$
		D_2 \leq \tilde{E}_2=E_2 < \dima K=\dimb K =D_2,
		$$
		a contradiction. So $D_2\leq D_1=E_1=\tilde{E}_1$. Still by Lemma \ref{le9}, $K$ has uniform vertical fibres.
		When $E_1<E_2$, a similar argument implies that $K$ has uniform horizontal fibres.
		When $E_1=E_2$, we have either $D_1=E_1=\tilde{E}_1$, or $D_2=E_2=\tilde{E}_2$. By Lemma \ref{le9}, $K$ has either uniform vertical or horizontal fibres. The above discussion implies that the condition \ref{A} is satisfied.
		
	\end{proof}
	
	Next, we prove that 
	$$
	``\ref{L}  \iff  \diml K=\dimh K \iff  \diml K=\dima K \iff K \text{ is Ahlfors regular} \text{''}.
	$$
	
	Recall that in \eqref{e108}, \eqref{e80} and by Lemma \ref{le5},
	\begin{equation}\nonumber \left\{
		\begin{aligned}
			F_1&=t_1+\min_{l\in \{1,\cdots, d\}} S_{1,l},\quad& & F_2=3, \quad & & \text{ if } K \text{ is of horizontal type},
			\\
			F_1&=3,\quad& & F_2=t_2+\min_{l\in \{1,\cdots, d\}} S_{2,l}, \quad & & \text{ if } K \text{ is of vertical type},
			\\
			F_1&=t_1+\min_{l\in \{1,\cdots, d\}} S_{1,l},\quad& & F_2=t_2+\min_{l\in \{1,\cdots, d\}} S_{2,l}, \quad & & \text{ if } K \text{ is of mixed type},
		\end{aligned} \right.
	\end{equation}
	and
	$$
	\begin{aligned}
		G_1&=\max_{\bm{q}\in \text{int}(\mathcal{S})}g_1(\bm{q})=\max_{\bm{q}\in \text{int}(\mathcal{S})}\frac{RR(\bm{q})}{RA(\bm{q})}+ \frac{QQ(\bm{q})-RR(\bm{q})}{SB(\bm{q})}, \quad
		\\
		G_2&=\max_{\bm{q}\in \text{int}(\mathcal{S})}g_2(\bm{q})=\max_{\bm{q}\in \text{int}(\mathcal{S})}\frac{SS(\bm{q})}{SB(\bm{q})}+\frac{QQ(\bm{q})-SS(\bm{q})}{RA(\bm{q})}.
	\end{aligned}
	$$
	Write 
	$$
	\tilde{F}_1=t_1+\min_{l\in \{1,\cdots, d\}} S_{1,l}, \quad \tilde{F}_2=t_2+\min_{l\in \{1,\cdots, d\}} S_{2,l}.
	$$
	\begin{lemma}\label{le10}
		For a \bara~carpet $K$, we have
		\begin{enumerate}[(a)]
			\item \label{e16}
			$\tilde{F}_1\leq G_1$, and if $\tilde{F}_1=G_1,$ then $K$ has uniform vertical fibres,
			\item \label{e17}
			$\tilde{F}_2\leq G_2$, and if $\tilde{F}_2=G_2$, then $K$ has uniform horizontal fibres.
		\end{enumerate}
	\end{lemma}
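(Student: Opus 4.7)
The plan for part (a) is to produce an explicit probability vector $\bm{q}^* \in \text{int}(\mathcal{S})$ on which $g_1$ can be evaluated in closed form and shown to dominate $\tilde{F}_1$. Motivated by the proof of Theorem \ref{th1}-\eqref{e42} (where uniform vertical fibres allowed the single exponent $D_1 - t_1$ to be used in every $b_{j_l}^{\cdot}$ slot), the natural candidate is
$$
\bm{q}^* := \bigl(a_{i_l}^{t_1}\, b_{j_l}^{S_{1,l}}\bigr)_{l=1}^d,
$$
which replaces the global exponent by the fibre-dependent exponent $S_{1,l}$ defined in \eqref{e15}. I would first check that $\bm{q}^*$ lies in $\text{int}(\mathcal{S})$: this reduces to the defining relation $\sum_{m \in I_i} b_{j_m}^{S_{1,m}} = 1$ for each $i \in \mathcal{J}_1$ (using that $S_{1,m}$ depends only on $i_m$) together with $\sum_{i \in \mathcal{J}_1} a_i^{t_1} = 1$ from \eqref{e1}.

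Next I would evaluate the four quantities $RR$, $RA$, $QQ$, $SB$ at $\bm{q}^*$. Since $R_i(\bm{q}^*) = a_i^{t_1}$ for every $i \in \mathcal{J}_1$, one gets the clean identity $RR(\bm{q}^*) = t_1 \, RA(\bm{q}^*)$, so the first summand in $g_1(\bm{q}^*)$ collapses to $t_1$. Expanding $QQ(\bm{q}^*)$, the $t_1 \log a_{i_l}$ piece cancels $RR(\bm{q}^*)$, leaving both $QQ(\bm{q}^*) - RR(\bm{q}^*)$ and $SB(\bm{q}^*)$ as sums over $i \in \mathcal{J}_1$ of $a_i^{t_1}$ times $\omega_i := \sum_{m \in I_i} b_{j_m}^{S_{1,m}} \log b_{j_m}$, with the numerator carrying the additional factor $S_{1,l}$ (which depends only on $i$). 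Because each $\omega_i < 0$, the ratio $(QQ(\bm{q}^*) - RR(\bm{q}^*))/SB(\bm{q}^*)$ is a convex combination of the values $\{S_{1,l}\}_{l=1}^d$ with strictly positive weights $a_i^{t_1} |\omega_i|$, and so is bounded below by $\min_l S_{1,l}$. This yields $g_1(\bm{q}^*) \geq t_1 + \min_l S_{1,l} = \tilde{F}_1$, and therefore $G_1 \geq g_1(\bm{q}^*) \geq \tilde{F}_1$.

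For the equality clause, the convex-combination bound is tight precisely when every $S_{1,l}$ carrying positive weight equals $\min_l S_{1,l}$; since every $i \in \mathcal{J}_1$ contributes a positive weight, this forces the $S_{1,l}$ to be constant in $l$, which is exactly the criterion (noted immediately after \eqref{e15}) for $K$ to have uniform vertical fibres. Thus $G_1 = \tilde{F}_1$ implies $g_1(\bm{q}^*) = \tilde{F}_1$, giving the conclusion of part (a). Part (b) follows by the symmetric construction $\bm{q}^{**} = \bigl(b_{j_l}^{t_2}\, a_{i_l}^{S_{2,l}}\bigr)_{l=1}^d$ applied to $g_2$. I do not anticipate a conceptual obstacle; the main care is in rearranging $QQ(\bm{q}^*) - RR(\bm{q}^*)$ into the weighted-average form, which hinges on the factorization of the sum over the partition $\{I_i\}_{i \in \mathcal{J}_1}$ made possible because $S_{1,l}$ depends only on $i_l$.
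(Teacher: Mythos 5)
Your proposal is correct. The lower bound uses exactly the paper's test vector $\bm{q}^*=\bigl(a_{i_l}^{t_1}b_{j_l}^{S_{1,l}}\bigr)_{l=1}^d$ and the same fibre decomposition over the partition $\{I_i\}_{i\in\mathcal{J}_1}$, and your closed-form evaluation $g_1(\bm{q}^*)=t_1+\sum_{i\in\mathcal{J}_1}w_i S_{1,l}$ (with $w_i$ proportional to $a_i^{t_1}|\omega_i|>0$, $l\in I_i$) is a sound way to get $\tilde{F}_1\leq G_1$; the identities $R_i(\bm{q}^*)=a_i^{t_1}$, $RR(\bm{q}^*)=t_1RA(\bm{q}^*)$ and the cancellation in $QQ(\bm{q}^*)-RR(\bm{q}^*)$ all check out. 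Where you genuinely diverge is the equality clause: the paper works with a general $\bm{q}$, derives the auxiliary bound $g_1(\bm{q})\geq h_1(\bm{q})$ (a mediant-type inequality, \eqref{e101}) together with the estimates \eqref{e105}, and then, assuming $\tilde F_1=G_1$, invokes the Lagrange-multiplier form of a $g_1$-maximizer from Lemma \ref{le5} to pin down $\theta_1=t_1$, $\lambda_1=\tilde F_1-t_1$, and only then extracts $\sum_{l\in I_i}b_{j_l}^{\tilde F_1-t_1}=1$ for all $i$. You instead squeeze at the fixed explicit point: $\tilde F_1\leq g_1(\bm{q}^*)\leq G_1=\tilde F_1$ forces equality in the convex-combination bound, and since every weight is strictly positive (each $\omega_i<0$ because $0<b_{j_l}<1$), all $S_{1,l}$ must coincide, which is precisely the uniform-vertical-fibre criterion stated after \eqref{e15}. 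This bypasses both the auxiliary function $h_1$ and Lemma \ref{le5}, so your treatment of the equality case is shorter and more self-contained, while the paper's route yields the pointwise inequality $g_1\geq h_1$ valid for arbitrary $\bm{q}$ as a by-product; part (b) is indeed symmetric with $\bm{q}^{**}=\bigl(b_{j_l}^{t_2}a_{i_l}^{S_{2,l}}\bigr)_{l=1}^d$.
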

	\begin{proof}
		We only prove part \eqref{e16} here.
		\vspace{0.2cm}
		
		For $\bm{q}=(q_1,\cdots,q_d)\in \text{int}(\mathcal{S})$, note that 
		$$
		QQ(\bm{q})-RR(\bm{q})=\sum_{l=1}^d q_l \log \frac{q_l}{R_{i_l}(\bm{q})} =\sum_{i\in \mathcal{J}_1} R_i(\bm{q}) \sum_{l\in I_i}\frac{q_l}{R_i(\bm{q})} \log \frac{q_l}{R_i(\bm{q})},
		$$
		and 
		$$
		SB(\bm{q})=\sum_{l=1}^d q_l \log b_{j_l}=\sum_{i\in \mathcal{J}_1} R_i(\bm{{q}}) \sum_{l\in I_i} \frac{q_l}{R_i(\bm{q})}\log b_{j_l}.
		$$
		Since $\sum_{i\in \mathcal{J}_1} R_i(\bm{q})=1$, we have 
		\begin{equation}\label{e101}
			g_1(\bm{q})\geq \frac{RR(\bm{q})}{RA(\bm{q})}+\min_{i\in \mathcal{J}_1} \frac{\sum_{l\in I_i}\frac{q_l}{R_i(\bm{q})} \log \frac{q_l}{R_i(\bm{q})}}{\sum_{l\in I_i} \frac{q_l}{R_i(\bm{q})}\log b_{j_l}}:=h_1(\bm{q}).
		\end{equation}
		Noticing that for $i\in \mathcal{J}_1$, $\sum_{l\in I_i}\frac{q_l}{R_i(\bm{q})}=1$, we have 
		\begin{equation}\label{e105}
			\frac{RR(\bm{q})}{RA(\bm{q})}\leq t_1, \quad\text{ and }\quad \frac{ \sum_{l\in I_i}\frac{q_l}{R_i(\bm{q})} \log \frac{q_l}{R_i(\bm{q})}}{\sum_{l\in I_i} \frac{q_l}{R_i(\bm{q})}\log b_{j_l}}\leq S_{1,m}, \quad\text{ for each }m=1,\cdots, d \text{ with }i_m=i,
		\end{equation}
		which yields that 
		$$
		\sup_{\bm{q}\in \text{int}(\mathcal{S})} h_1(\bm{q})\leq \tilde{F}_1.
		$$
		Choosing $\bm{q}= \big(a_{i_l}^{t_1} b_{j_l}^{S_{1,l}} \big)_{l=1}^d\in \text{int}(\mathcal{S})$, we have  
		$$
		R_i(\bm{q})=a_i^{t_1}, \quad \text{ and }\quad \frac{q_l}{R_i(\bm{q})}=b_{j_l}^{S_{1,l}}, \quad \text{ for all }l\in I_i, i\in \mathcal{J}_1,
		$$
		and so $h_1(\bm{q})=\tilde{F_1}.$ Thus, 
		$$
		\max_{\bm{q}\in \text{int}(\mathcal{S})} h_1(\bm{q})= \tilde{F}_1,
		$$ 
		which gives that $\tilde{F}_1\leq G_1$.
		
		Next, assume that $\tilde{F}_1=G_1$. By \eqref{e101}, this implies that there exists $\bm{q}=(q_1,\cdots,q_d)\in \text{int}(\mathcal{S})$ with $h_1(\bm{q})=\tilde{F}_1$ such that
		\begin{equation}\label{e18}
			\frac{\sum_{l\in I_i}\frac{q_l}{R_i(\bm{q})} \log \frac{q_l}{R_i(\bm{q})}}{\sum_{l\in I_i} \frac{q_l}{R_i(\bm{q})}\log b_{j_l}} \text{ are the same for all }i\in \mathcal{J}_1,
		\end{equation}
		and $g_1(\bm{q})=G_1$. Then by \eqref{e105}, $\frac{RR(\bm{q})}{RA(\bm{q})}= t_1$ and each term in \eqref{e18} is equal to $\min\{S_{1,l}:l=1,2,\cdots,d\}=\tilde{F}_1-t_1$. Then by Lemma \ref{le5}, $\bm{q}$ has the following form:
		$$
		\bm{q}=\left(q_l\right)_{l=1}^d=\left( a_{i_l}^{\theta_1} b_{j_l}^{\lambda_1} \big( \sum_{m\in I_{i_l}} b_{j_m}^{\lambda_1} \big)^{\rho_1-1} \right)_{l=1}^d,
		$$
		where 
		$$
		\theta_1=\frac{RR(\bm{q})}{RA(\bm{q})}, \quad 
		\lambda_1=\frac{QQ(\bm{q})-RR(\bm{q})}{SB(\bm{q})}, \quad 
		\rho_1=\frac{RA(\bm{q})}{SB(\bm{q})}.		
		$$
		Since $\frac{RR(\bm{q})}{RA(\bm{q})}=t_1$ and $\theta_1+\lambda_1=g_1(\bm{q})=G_1=\tilde{F}_1$, we have $\theta_1=t_1$ and $\lambda_1=\tilde{F}_1-t_1$.
		Thus for $i\in \mathcal{J}_1$, and $l\in I_i$, we have 
		$$
		\frac{q_l}{R_i(\bm{q})}=\frac{b_{j_l}^{\tilde{F}_1-t_1}}{\sum_{m\in I_i}b_{j_{m}}^{\tilde{F}_1-t_1}}.
		$$
		Substituting this into \eqref{e18}, we get 
		$$
		\sum_{l\in I_i} b_{j_l}^{\tilde{F}_1-t_1}=1, \quad \text{ for all }i\in \mathcal{J}_1.
		$$
		Thus, $\tilde{F}_1=G_1$ implies that $K$ has uniform vertical fibres.
	\end{proof}
	
	\begin{lemma}\label{le20}
		For a \bara~capret $K$, if \ref{L} holds, then $K$ is Ahlfors regular.
	\end{lemma}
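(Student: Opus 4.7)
The plan is to construct, case by case, an explicit self-affine probability measure $\mu$ on $K$ satisfying $\mu(B(x,r)\cap K)\asymp r^{s}$ for every $x\in K$ and $0<r\leq |K|$, where $s=\dimh K$. Combining Proposition \ref{prop5} with the already-established Theorem \ref{th1}-\eqref{e42}, condition \ref{L} yields $s=\dimh K=\dimb K=\max\{D_1,D_2\}$. The building blocks are the two Bernoulli measures $\mu_1:=\mu_{\bm{q}^{(1)}}$ and $\mu_2:=\mu_{\bm{q}^{(2)}}$ from Lemma \ref{le3}, with $\bm{q}^{(1)}=(a_{i_l}^{t_1}b_{j_l}^{D_1-t_1})_{l=1}^{d}$ and $\bm{q}^{(2)}=(b_{j_l}^{t_2}a_{i_l}^{D_2-t_2})_{l=1}^{d}$; both have strictly positive entries, so Lemmas \ref{le17} and \ref{le12} apply. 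In every case the passage from approximate squares to Euclidean balls will be handled by Lemma \ref{le11}.

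If $K$ is of horizontal type, then $a_{i_l}\geq b_{j_l}$ for every $l$ forces $k_1(\varepsilon,\delta)\geq k_2(\varepsilon,\delta)$, so every approximate square is of $1$-type. Uniform vertical fibres (from \ref{L}) give $R_i(\bm{q}^{(1)})=a_i^{t_1}$, so Lemma \ref{le12} produces $\mu_1(Q(\varepsilon,r))=A_{\varepsilon|_{k_1}}^{t_1}B_{\varepsilon|_{k_2}}^{D_1-t_1}\asymp r^{D_1}$, using the bounds $A_{\varepsilon|_{k_1}},B_{\varepsilon|_{k_2}}\in[r,Cr]$ from Lemma \ref{le11}(a). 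Setting $\mu=\mu_1$ gives the required estimate on approximate squares; vertical type is symmetric with $\mu=\mu_2$.

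The main obstacle is mixed type, where approximate squares of both types occur and neither $\mu_1$ nor $\mu_2$ alone gives matching upper and lower bounds on both. I will use $\mu=\tfrac12(\mu_1+\mu_2)$. Under \ref{L} the set $K$ has uniform vertical and horizontal fibres with exponents $\alpha:=D_1-t_1$ and $\beta:=D_2-t_2$, and the extra hypothesis $F_1=F_2$ forces $D_1=D_2=s$, i.e.\ $t_1+\alpha=t_2+\beta=s$. Uniform fibres yield $R_i(\bm{q}^{(1)})=a_i^{t_1}$ and $S_j(\bm{q}^{(2)})=b_j^{t_2}$, whereas the cross-marginals $S_j(\bm{q}^{(1)})=b_j^{\alpha}\gamma_j$ and $R_i(\bm{q}^{(2)})=a_i^{\beta}\delta_i$ pick up the correction factors $\gamma_j:=\sum_{l\in J_j}a_{i_l}^{t_1}$ and $\delta_i:=\sum_{l\in I_i}b_{j_l}^{t_2}$. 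Lemma \ref{le12} then gives
$$
\mu_{1}(Q(\varepsilon,r))=A_{\varepsilon|_{k_1}}^{t_1}B_{\varepsilon|_{k_2}}^{\alpha}\prod_{m=k_1+1}^{k_2}\gamma_{j_{\varepsilon_m}},\qquad \mu_{2}(Q(\varepsilon,r))=A_{\varepsilon|_{k_1}}^{\beta}B_{\varepsilon|_{k_2}}^{t_2}\prod_{m=k_2+1}^{k_1}\delta_{i_{\varepsilon_m}},
$$
where the empty product is interpreted as $1$ on squares of the opposite type.

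The hard part will be showing $\gamma_j,\delta_i\leq 1$. The key observation is that uniform horizontal fibres give $\sum_{l\in J_j}a_{i_l}^{\beta}=1$, while $\{i_l:l\in J_j\}\subseteq\mathcal{J}_1$ together with $\sum_{i\in\mathcal{J}_1}a_i^{t_1}=1$ force $\beta\leq t_1$; thus $\gamma_j\leq\sum_{l\in J_j}a_{i_l}^{\beta}=1$, and symmetrically $\delta_i\leq 1$. Using $t_1+\alpha=t_2+\beta=s$ and $A_{\varepsilon|_{k_1}},B_{\varepsilon|_{k_2}}\asymp r$, on $1$-type squares $\mu_1(Q)\asymp r^{s}$ and $\mu_2(Q)\leq cr^{s}$, while on $2$-type squares the roles swap; hence $\mu(Q(\varepsilon,r))\asymp r^{s}$ uniformly. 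Finally Lemma \ref{le11} transfers the estimate to balls: $B(x,r)\cap K$ is covered by at most $C$ approximate squares of radius $r$ (giving the upper bound), and writing $x=\Pi(\varepsilon)$ one has $Q(\varepsilon,r/(\sqrt{2}C))\subseteq B(x,r)$ (giving the lower bound), yielding Ahlfors $s$-regularity.
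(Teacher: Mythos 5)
Your proposal is correct and takes essentially the same route as the paper: the Bernoulli measure $\big(a_{i_l}^{t_1}b_{j_l}^{D_1-t_1}\big)_{l=1}^d$ (resp.\ its symmetric counterpart) handles the horizontal (resp.\ vertical) type, the average $\tfrac12(\mu_1+\mu_2)$ handles the mixed type, and the approximate-square estimates via Lemmas \ref{le12} and \ref{le11} together with $D_1=D_2$ (forced by $F_1=F_2$ and Lemma \ref{le9}) give the two-sided bound $\mu\big(B(x,r)\big)\asymp r^{D_1}$, exactly as in the paper, with your explicit verification that $\gamma_j,\delta_i\leq 1$ a welcome spelling-out of an inequality the paper uses silently. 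One small caveat: invoking Proposition \ref{prop5} (whose stated proof cites Theorem \ref{th3}, part of which rests on this very lemma) to get $\dimh K=\dimb K$ up front risks circularity and is unnecessary, since your ball estimates already force the exponent to equal $\dimh K$.
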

	\begin{proof}
		When $K$ is of horizontal type, since condition \ref{L} holds, $K$ has uniform vertical fibres, i.e. there is $t\geq 0$ such that 
		\begin{equation}\label{e81}
			\sum_{l\in I_i} b_{j_l}^t=1 \quad \text{ for all }i\in \mathcal{J}_1.
		\end{equation}
		Combining this with  \eqref{e1} and \eqref{e10}, we find that $t=D_1-t_1$. Let $\bm{q}=\big(a_{i_l}^{t_1} b_{j_l}^{D_1-t_1}\big)_{l=1}^d\in \mathcal{S}$. Let $\mu$ be the self-affine measure associated with $\bm{q}$. For $\varepsilon\in \Sigma^{\mathbb{N}}$ and $\delta>0$, write $k_1=k_1(\varepsilon,\delta)$ and $k_2=k_2(\varepsilon,\delta)$ for short. Noticing that
		$$
		\delta\leq B_{\varepsilon|_{k_2}} \leq A_{\varepsilon|_{k_2}}, \quad \text{ and }\quad k_1=\max\{k\in \mathbb{N}:\delta\leq A_{\varepsilon|_k}\},
		$$
		we conclude that $k_1\geq k_2$ and $Q(\varepsilon,\delta)$ is of $1$-type. Combining this with Lemma \ref{le12} and \eqref{e81}, we have 
		$$
		\mu \big( Q(\varepsilon,\delta) \big) = \prod_{l=1}^{k_2} a_{i_l}^{t_1} b_{j_l}^{D_1-t_1} \prod_{l=k_2+1}^{k_1} a_{i_l}^{t_1}=A_{\varepsilon|_{k_1}}^{t_1} B_{\varepsilon|_{k_2}}^{D_1-t_1}.
		$$
		By Lemma \ref{le11}, there is a constant $C>0$ such that 
		$$
		\begin{aligned}
			\mu \big( B(\Pi(\varepsilon),\delta) \big) &\leq C \max_{Q\in \mathcal{Q}_{\delta}} \mu(Q) \leq C^{D_1+1} \delta^{D_1},\quad \big(\text{recall }\eqref{e112}\big)
			\\
			\mu \big( B(\Pi(\varepsilon),\delta) \big) &\geq \mu \left(Q(\varepsilon,\frac{\delta}{\sqrt{2}C}) \right) \geq \left(\frac{1}{\sqrt{2}C}\right)^{D_1} \delta^{D_1}.
		\end{aligned}
		$$
		From \cite[Proposition 2.1.5]{BSS23}, we conclude that $K$ is Ahlfors regular.
		
		When $K$ is of vertical type, it is similar as above.
		
		When $K$ is of mixed type. Let $\bm{q}^{(1)}=\big(a_{i_l}^{t_1} b_{j_l}^{D_1-t_1}\big)_{l=1}^d$, $\bm{q}^{(2)}=\big(b_{j_l}^{t_2} a_{i_l}^{D_2-t_2}\big)_{l=1}^d$ in $\mathcal{S}$. Let $\mu_1$ and $\mu_2$ be the self-affine measures associated with $\bm{q}^{(1)}$ and $\bm{q}^{(2)}$, respectively. Since $K$ has both uniform vertical and horizontal fibres, we see that 
		$$
		\sum_{l\in I_i} b_{j_l}^{D_1-t_1}=1, \quad \sum_{l\in J_j} a_{i_l}^{D_2-t_2}=1, \quad \text{ for all } i\in \mathcal{J}_1, j\in \mathcal{J}_2.
		$$
		For $\varepsilon\in \Sigma^{\mathbb{N}}$ and $\delta>0$, write $k_1=k_1(\varepsilon,\delta)$ and $k_2=k_2(\varepsilon,\delta)$ for short. If $Q(\varepsilon,\delta)$ is of $1$-type, we have 
		$$
		\begin{aligned}
			\mu_1 \big( Q(\varepsilon,\delta) \big) &= \prod_{l=1}^{k_2} a_{i_l}^{t_1} b_{j_l}^{D_1-t_1} \prod_{l=k_2+1}^{k_1} a_{i_l}^{t_1}=A_{\varepsilon|_{k_1}}^{t_1} B_{\varepsilon|_{k_2}}^{D_1-t_1},
			\\
			\mu_2 \big( Q(\varepsilon,\delta) \big) &= \prod_{l=1}^{k_2} b_{j_l}^{t_2} a_{i_l}^{D_2-t_2} \prod_{l=k_2+1}^{k_1} \Big(a_{i_l}^{D_2-t_2} \sum_{m\in I_{i_l}} b_{j_m}^{t_2} \Big) \leq A_{\varepsilon|_{k_1}}^{D_2-t_2} B_{\varepsilon|_{k_2}}^{t_2};
		\end{aligned}
		$$
		and if $Q(\varepsilon,\delta)$ is of $2$-type, we have 
		$$
		\begin{aligned}
			\mu_1 \big( Q(\varepsilon,\delta) \big) &= \prod_{l=1}^{k_1} a_{i_l}^{t_1} b_{j_l}^{D_1-t_1} \prod_{l=k_1+1}^{k_2} \Big(b_{j_l}^{D_1-t_1} \sum_{m\in J_{j_l}} a_{i_m}^{t_1}\Big)\leq A_{\varepsilon|_{k_1}}^{t_1} B_{\varepsilon|_{k_2}}^{D_1-t_1},
			\\
			\mu_2 \big( Q(\varepsilon,\delta) \big) &= \prod_{l=1}^{k_1} b_{j_l}^{t_2} a_{i_l}^{D_2-t_2} \prod_{l=k_1+1}^{k_2} b_{j_l}^{t_2} = A_{\varepsilon|_{k_1}}^{D_2-t_2} B_{\varepsilon|_{k_2}}^{t_2}.
		\end{aligned}
		$$
		Since by \ref{L}, $E_1=F_1=F_2=E_2$, and so by Lemma \ref{le9},
		$D_1=D_2$. Let $\mu=(\mu_1+\mu_2)/2$. Using Lemma \ref{le11}, we have 
		$$
		\frac{1}{2} \delta^{D_1} \leq \mu \big(Q(\varepsilon,\delta) \big) \leq C^{D_1} \delta^{D_1},
		$$
		which implies that $K$ is Ahlfors regular.
		
		This finishes the proof.
	\end{proof}

	\begin{proof}[Proof of Theorem \ref{th3}-\eqref{e12}]
		Note that $\diml A=\min \{F_1,F_2\}$.
		\vspace{0.2cm}
		
		``$\ref{L} \Longrightarrow \diml K=\dima K$''. When $K$ is of horizontal type, $K$ has uniform vertical fibres. So 
		$$
		\tilde{F}_1=t_1+\min_{l\in \{1,\cdots, d\}} S_{1,l} =t_1+\max_{l\in \{1,\cdots, d\}} S_{1,l} =\tilde{E}_1.
		$$
		Noticing that $F_1=\tilde{F}_1$ and $E_1=\tilde{E}_1$, we have 
		$$
		\diml K=F_1=E_1=\dima K.
		$$
		When $K$ is of vertical type, we also have $\diml K=\dima K$. When $K$ is of mixed type, $K$ has both uniform vertical and horizontal fibres. So 
		$$
		F_1=\tilde{F}_1=\tilde{E}_1=E_1, \quad F_2=\tilde{F}_2=\tilde{E}_2=E_2.
		$$
		Together with $F_1=F_2$,
		this implies that $\diml K =\dima K$.
		\vspace{0.2cm}
		
		``$\diml K=\dima K \Longrightarrow \diml K=\dimh K$''. This is obvious.
		\vspace{0.2cm}
		
		``$\diml K =\dimh K \Longrightarrow \ref{L}$''. By Lemma \ref{le7}, we have 
		\begin{equation}\label{e19}
			\min \{F_1,F_2\}=\max \{ G_1,G_2\}.
		\end{equation}
		When $K$ is of horizontal type, $\mathcal{S}_1=\mathcal{S}$, $\min\{F_1,F_2\}=\diml K=F_1=\tilde{F}_1$, and $\max\{G_1,G_2\}=\dimh K=G_1.$ Thus, by \eqref{e19}, $\tilde{F}_1=G_1.$ Using Lemma \ref{le10}, we conclude that $K$ has uniform vertical fibres. When $K$ is of vertical type, similarly $K$ has uniform horizontal fibres. When $K$ is of mixed type, we have 
		$$
		F_1=\tilde{F}_1, \quad F_2=\tilde{F_2}.
		$$
		Combining this with \eqref{e19} and Lemma \ref{le10}, we see that $F_1=\tilde{F}_1=G_1$, $F_2=\tilde{F}_2=G_2$, and $F_1=F_2$. Still by Lemma \ref{le10}, we also have $K$ has both uniform vertical and horizontal fibres.  This implies \ref{L} holds.
		\vspace{0.2cm}
		
		``$\ref{L} \Longrightarrow K \text{ is Ahlfors regular}$''. This follows by Lemma \ref{le20}.
		\vspace{0.2cm}
		
		``$K \text{ is Ahlfors regular} \Longrightarrow \diml K=\dima K$''. This follows by \cite[Theorem 6.4.1]{F21}.
	\end{proof}

	\section{Examples}\label{sec8}

	In this section, we provide two examples. Through Example \ref{ex1}, we point out that  for a \bara~carpet $K$ with $\dimh K <\dimb K$, it may happen that $0<\mathcal{H}^{\dimh K}(K)<+\infty$, which is unlike the setting of Lalley and Gatzouras. However, Corollary \ref{cor1} states that $\dimh K=\dimb K$ is still a sufficient condition for $0<\mathcal{H}^{\dimh K}(K)<+\infty$. We will provide the proof of Corollary \ref{cor1} after Example \ref{ex1}.
	In Example \ref{ex2}, we demonstrate that for a \bara~carpet $K$, it may happen that  
	\begin{equation}\label{e113}
	\diml K<\dimh K =\dimb K <\dima K, 
	\end{equation} 
	which completes the analysis of eight possible comparisons among lower, Hausdorff, box, and Assouad dimensions in Corollary \ref{cor4}. It is worth mentioning that the relationship in \eqref{e113} is not the first time it appears. Indeed, in 2021, Bárány, Käenmäki and Yu \cite{BKY21} provided an example of a non-carpet self-affine set $K$ satisfying \eqref{e113}.
	
	\begin{example}\label{ex1}
		Let $a=0.0765,b=0.2298,c=0.499,d=0.2904$. Consider the following IFS $\{\psi_1,\cdots,\psi_6\}$ (see left of Figure \ref{f1}):
		$$
		\begin{aligned}
			\psi_1\begin{pmatrix}
				x\\y
			\end{pmatrix}
			&=\begin{pmatrix}
				a&0\\
				0&d
			\end{pmatrix}
			\begin{pmatrix}
				x\\y
			\end{pmatrix}+
			\begin{pmatrix}
				0.05\\0.025
			\end{pmatrix},\quad& & &
			\psi_2\begin{pmatrix}
				x\\y
			\end{pmatrix}
			&=\begin{pmatrix}
				a&0\\
				0&d
			\end{pmatrix}
			\begin{pmatrix}
				x\\y
			\end{pmatrix}+
			\begin{pmatrix}
				0.05\\0.35
			\end{pmatrix},
			\\
			\psi_3\begin{pmatrix}
				x\\y
			\end{pmatrix}
			&=\begin{pmatrix}
				b&0\\
				0&d
			\end{pmatrix}
			\begin{pmatrix}
				x\\y
			\end{pmatrix}+
			\begin{pmatrix}
				0.2\\0.025
			\end{pmatrix},& & &
			\psi_4\begin{pmatrix}
				x\\y
			\end{pmatrix}
			&=\begin{pmatrix}
				b&0\\
				0&d
			\end{pmatrix}
			\begin{pmatrix}
				x\\y
			\end{pmatrix}+
			\begin{pmatrix}
				0.2\\0.675
			\end{pmatrix},
			\\
			\psi_5\begin{pmatrix}
				x\\y
			\end{pmatrix}
			&=\begin{pmatrix}
				c&0\\
				0&d
			\end{pmatrix}
			\begin{pmatrix}
				x\\y
			\end{pmatrix}+
			\begin{pmatrix}
				0.48\\0.35
			\end{pmatrix},& & &
			\psi_6\begin{pmatrix}
				x\\y
			\end{pmatrix}
			&=\begin{pmatrix}
				c&0\\
				0&d
			\end{pmatrix}
			\begin{pmatrix}
				x\\y
			\end{pmatrix}+
			\begin{pmatrix}
				0.48\\0.675
			\end{pmatrix}.
			\\
		\end{aligned}
		$$
		Denote $K$ the corresponding \bara~carpet. It is straightforward to see that $K$ has uniform vertical fibres but does not have uniform horizontal fibres. Using software ``Wolfram Mathematica'', we find that the numerical maximums of the functions $g_1$, $g_2$ are approximately
		$$
		G_1 \approx 1.368858891,\quad G_2\approx 1.368381784,
		$$
		and by solving the numerical solutions of equation \eqref{e10}, 
		$$
		D_1 \approx 1.368858891,\quad D_2 \approx 1.369071220.
		$$
		From $G_1>G_2$, we see that \ref{H'} holds, then combining Theorem \ref{th1} and Proposition \ref{le15}, we deduce that $$
		0<\mathcal{H}^{\dimh K}(K)<+\infty.
		$$
		From $D_2>D_1$, we see that \ref{B} does not hold, then still by Theorem \ref{th1},
		$$
		\dimh K<\dimb K.
		$$
		Note that here $D_1=G_1$ indeed, which follows from Remark \ref{re3}.
	\end{example}
	
	\begin{figure}[htbp]
		\centering
		\subfigure{
			\includegraphics[width=0.4\textwidth]{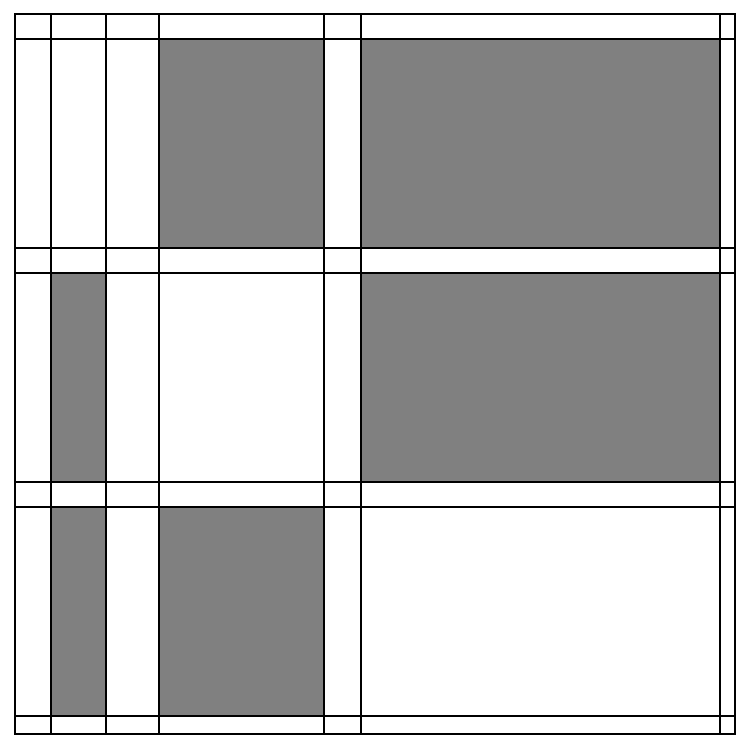}
			\begin{picture}(0,0)
				\put(-165,-5){$a$}
				\put(-125,-5){$b$}
				\put(-55,-5){$c$}
				\put(-185,25){$d$}
				\put(-185,85){$d$}
				\put(-185,140){$d$}
			\end{picture}
		}
		\subfigure{
			\includegraphics[width=0.4\textwidth]{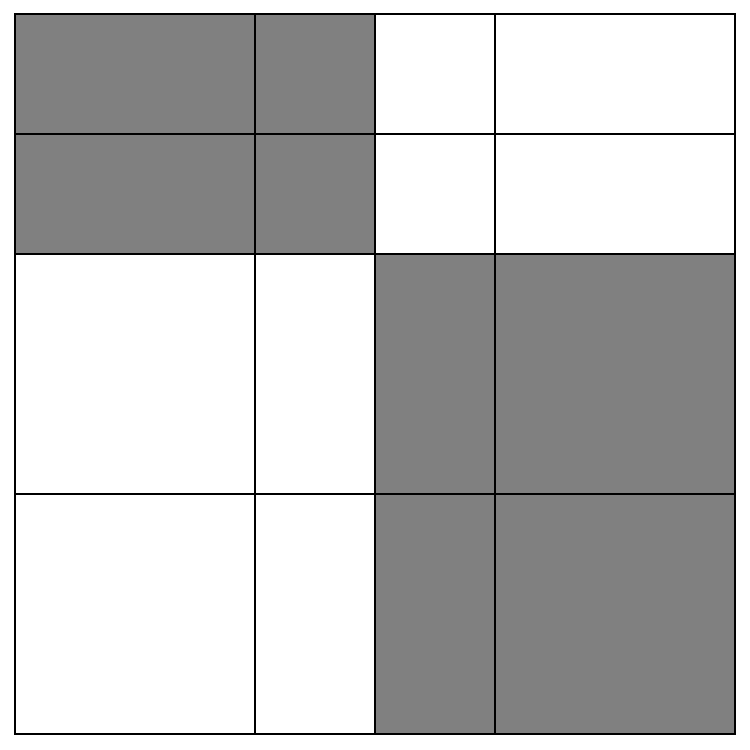}
			\begin{picture}(0,0)
				\put(-80,-7){$\frac{1}{6}$}
				\put(-40,-7){$\frac{1}{3}$}
				\put(-6,32){$\frac{1}{3}$}
				\put(-6,88){$\frac{1}{3}$}
				\put(-184,128){$\frac{1}{6}$}
				\put(-184,155){$\frac{1}{6}$}
				\put(-152,178){$\frac{1}{3}$}
				\put(-110,178){$\frac{1}{6}$}
			\end{picture}
		}
		
		\caption{Generating templatex for \bara~carpets in  Examples \ref{ex1} (\textit{left}) and  \ref{ex2} (\textit{right}).}
		\label{f1}
	\end{figure}
	\begin{proof}[Proof of Corollary \ref{cor1}]
		Due to Example \ref{ex1}, it suffices to prove that $\dimh K=\dimb K$ implies $0<\mathcal{H}^{\dimh K }(K)<+\infty$. Assume that $\dimb K=\dimh K$, then by Theorem \ref{th1}, the condition \ref{B} holds. Without loss of generality, assume that $D_1\geq D_2$, and $K$ has vertical uniform fibres. By Theorem \ref{th1} and Proposition \ref{le15}, it suffices to prove that \ref{H'} holds. We only need to look at the case $G_1\leq G_2$ and $\mathcal{J}_1\times \mathcal{J}_2 \neq \mathcal{J}$. By Lemma \ref{le7}, we have 
		\begin{equation}\label{e104}
			G_2=\dimh K=\dimb K=\max\{D_1,D_2\}\geq D_2.
		\end{equation}
		Using Proposition \ref{le8}, Lemmas \ref{le3} and \ref{le4}, we obtain that
		$$G_2= \max_{\bm{q} \in \text{int}(\mathcal{S}_2)} g_2(\bm{q}) \leq \max_{\bm{q}\in \mathcal{S}} f_2(\bm{q}) = D_2.
		$$ 
		Combining this with \eqref{e104}, we have $G_2=D_2$. Then by Lemma \ref{le19}, $K$ has uniform horizontal fibres. So \ref{H'} holds.
	\end{proof}

	\begin{example}\label{ex2}
		Let $K$ be a \bara~carpet whose IFS consisting of eight mappings  as shown in the right of Figure \ref{f1}. It is straightforward to see that $K$ has uniform horizontal fibres but does not have uniform vertical fibres, and $t_1=t_2=1$. Let 
		$\alpha$ be the unique solution of 
		$$
		\left(\frac{1}{6}\right)^{\alpha}+\left(\frac{1}{3}\right)^{\alpha}=1.
		$$
		It is directly to check that  $D_1=D_2=E_2=F_2=1+\alpha$, and 
		$$
		E_1=1+\frac{\log 2}{\log 3}>1+\alpha,\quad F_1 =1+\frac{\log 2}{\log 6}<1+\alpha.
		$$
		Therefore, \ref{B} holds and \ref{A}, \ref{L} do not hold. So by Theorems \ref{th1} and  \ref{th3},
		$$
		\diml K <\dimh K =\dimb K <\dima K .
		$$
	\end{example}

	
	\vspace{0.2cm}
	


	\bibliographystyle{amsplain}
	
\end{document}